\def\@tocline#1#2#3#4#5#6#7{\relax
  \ifnum #1>\c@tocdepth 
  \else
    \par \addpenalty\@secpenalty\addvspace{#2}%
    \begingroup \hyphenpenalty\@M
    \@ifempty{#4}{%
      \@tempdima\csname r@tocindent\number#1\endcsname\relax
    }{%
      \@tempdima#4\relax
    }%
    \parindent\z@ \leftskip#3\relax \advance\leftskip\@tempdima\relax
    \rightskip\@pnumwidth plus4em \parfillskip-\@pnumwidth
    #5\leavevmode\hskip-\@tempdima
      \ifcase #1
      \or\or \hskip 2em \or \hskip 2em \else \hskip 3em \fi%
      #6\nobreak\relax
    \dotfill\hbox to\@pnumwidth{\@tocpagenum{#7}}\par
    \nobreak
    \endgroup
  \fi}
\DeclareSymbolFontAlphabet{\mathbb}{AMSb} 
\DeclareSymbolFontAlphabet{\mathbbl}{bbold} 
\newcommand{\Prism}{{\mathlarger{\mathbbl{\Delta}}}}
\numberwithin{equation}{section}
\newtheorem{theorem}[subsection]{Theorem}
\newtheorem{corollary}[subsection]{Corollary}
\newtheorem{lemma}[subsection]{Lemma}
\newtheorem{proposition}[subsection]{Proposition}
\theoremstyle{definition}
\newtheorem{definition}[subsection]{Definition}
\newtheorem{remark}[subsection]{Remark}
\newtheorem{example}[subsection]{Example}
\newtheorem{construction}[subsection]{Construction}
\newcommand{\F}{\mathbb{F}}
\renewcommand{\L}{\mathbb{L}}
\newcommand{\N}{\mathbb{N}}
\newcommand{\Q}{\mathbb{Q}}
\newcommand{\Z}{\mathbb{Z}}
\newcommand{\cV}{\mathcal{V}}
\newcommand{\cal}{\mathcal}
\DeclareMathOperator{\THH}{THH}
\DeclareMathOperator{\HH}{HH}
\newcommand{\gp}{\mathrm{gp}}
\newcommand{\rep}{\mathrm{rep}}
\newcommand{\GL}{\mathrm{GL}}
\newcommand{\perf}{\mathrm{perf}}
\newcommand{\lQRSPerfd}{\mathrm{lQRSPerfd}}
\newcommand{\lQSyn}{\mathrm{lQSyn}}
\newcommand{\QSyn}{\mathrm{QSyn}}
\title[]{Logarithmic Prismatic Cohomology \\ via Logarithmic THH}
\author{Federico Binda}
\address{Department of Mathematics ``F. Enriques'', University of Milan, Italy}
\email[F. Binda]{federico.binda@unimi.it}
\author{Tommy Lundemo}
\address{Department of Mathematics and Informatics, University of Wuppertal, Germany}
\email[T. Lundemo]{lundemo@math.uni-wuppertal.de}
\author{Doosung Park}
\address{Department of Mathematics and Informatics, University of Wuppertal, Germany}
\email[D. Park]{dpark@uni-wuppertal.de}
\author{Paul Arne {\O}stv{\ae}r}
\address{Department of Mathematics ``F. Enriques'', University of Milan, Italy \&
Department of Mathematics, University of Oslo, Norway}
\email[P.A. {\O}stv{\ae}r]{paul.oestvaer@unimi.it \& paularne@math.uio.no}
\begin{document}
\maketitle

\begin{abstract} 
Inspired by Bhatt--Morrow--Scholze's work on ${\rm THH}$, 
we define Nygaard-completed log prismatic cohomology based on log topological Hochschild homology 
via filtrations on log ${\rm THH}$ and its variants. 
Moreover, 
of independent interest, 
we describe log ${\rm THH}$ for quasiregular semiperfectoid pre-log rings as a $1$-parameter deformation of ordinary, 
non-logarithmic Hochschild homology.
\end{abstract}
\setcounter{tocdepth}{1}

\tableofcontents

\section{Introduction}

The work of Bhatt--Morrow--Scholze \cite{BMS19} provides filtrations on variants of topological Hochschild homology, 
in analogy with the motivic filtrations on algebraic $K$-theory \cite{FS02}, 
\cite{levine.coniveau}, 
\cite{voevodsky.open}. 
Control over the graded pieces allows one to define new integral cohomology theories,
such as prismatic and syntomic cohomology, for $p$-adic schemes. 
These are computable invariants:
Antieau--Krause--Nikolaus \cite{AKN22} reduces the computation of topological cyclic homology 
(and hence algebraic $K$-theory \cites{Qui72, DGM13}) 
of rings like ${\Bbb Z}/p^n$ to the computation of prismatic cohomology, 
for which they provide an algorithm.

An example more in the spirit of the present paper is the original arithmetic application of \cite{BMS19}, 
in which \'etale, crystalline, and de Rham cohomology are exhibited as specializations of a common 
cohomology theory \cite[Theorem 1.2]{BMS19}. 
The results of this paper provide a first step in establishing matching comparison results in the  context of 
logarithmic geometry using  techniques from homotopy theory (see \cite{KY22} for a different approach). The interest in such results is justified because a much wider class of objects behaves 
``as if they were smooth'' in logarithmic geometry.  
Standard examples are varieties with semi-stable reduction.

\subsection{Log topological Hochschild homology} 
We begin by quickly reviewing the construction of  (Nygaard-complete) prismatic cohomology, following \cite{BMS19}. 
It is a $p$-adic cohomology theory built in several steps, 
starting with the case of \emph{quasiregular semiperfectoid} rings: this is a convenient class of rings that contains perfectoid rings, but also the tensor products of perfections of smooth algebras over a field of positive characteristic. 
In this case, 
one defines 
\[\widehat{\Prism}_S 
:= 
\pi_0{\rm TC}^-(S ; {\Bbb Z}_p),
\] 
where ${\rm TC}^-(S ; {\Bbb Z}_p) := {\rm THH}(S ; {\Bbb Z}_p)^{hS^1}$ is the homotopy fixed points of the 
natural $S^1$-action on topological Hochschild homology \cite{zbMATH07303328}. 
One feature of quasiregular semiperfectoid rings is that ${\rm THH}(S ; {\Bbb Z}_p)$ is \emph{even}, 
i.e., its odd homotopy groups vanish, 
and $\widehat{\Prism}_S$ is complete for the filtration of the degenerate homotopy fixed point 
spectral sequence 
\[E_2^{s, t} = H^s(S^1, \pi_t{\rm THH}(S ; {\Bbb Z}_p)) \implies \pi_{t - s} {\rm TC}^-(S ; {\Bbb Z}_p).\] 
By definition, this is the \emph{Nygaard filtration} on $\widehat{\Prism}_S$. 
The relationship between this construction and the site-theoretic definition of prismatic cohomology 
$\Prism_S$ is the content of \cite[Theorem 13.1]{BS22}, 
where Bhatt and Scholze exhibit $\widehat{\Prism}_S$ as the completion of $\Prism_S$ with respect to an 
independently defined Nygaard filtration, thus justifying a posteriori the terminology.  The \emph{unfolding} equivalence of \cite[Proposition 4.31]{BMS19} extends the definition of $\widehat{\Prism}_A$ from quasiregular semiperfectoid rings to quasisyntomic rings. 
\medskip

The key construction carried out in this paper is that of a logarithmic version of the Nygaard-complete 
prismatic cohomology $\widehat{\Prism}_S$ of \cite{BMS19}. 
For this, 
we shall replace topological Hochschild homology with \emph{log topological Hochschild homology} 
in the sense of \cite{Rog09}. 

Let us review the basics of this construction. 
A \emph{pre-log ring} $(A, M, \alpha)$ consists of a commutative ring $A$, 
a commutative monoid $M$, 
and a map of commutative monoids $\alpha \colon M \to (A, \cdot)$. 
In this paper, 
pre-log rings will play the role of affine schemes in logarithmic geometry \cite{Kat89}; 
we refer to \cite[Section 7.4]{BLPO} for further discussion on this point.  
The \emph{log ${\rm THH}$ of $(A, M)$}, which we review in Section \ref{sec:logthh}, 
is a commutative $A$-algebra in spectra \cite[Definition 8.11]{Rog09}.
As we explain in Section \ref{subsec:cyclotomic}, 
log ${\rm THH}$ carries a cyclotomic structure in the sense of Nikolaus--Scholze \cite{NS18}, 
and hence we obtain definitions of log ${\rm TC}^-, {\rm TP}$, and ${\rm TC}$. 

One goal of this paper is to define logarithmic analogs of some key notions in \cite{BMS19}, 
such as quasisyntomic and quasiregular semiperfectoid, 
that allow us to define Nygaard-completed log prismatic cohomology in a manner analogous to that of \emph{loc.\ cit.}
While this leads to analogs of the results of \cite{BMS19}, it also sheds new light on logarithmic ${\rm THH}$. 
As we explain in further detail in Section \ref{subsec:deformlogthhtohh}, 
the results of this paper show that log ${\rm THH}$ can be thought of as a $1$-parameter deformation of 
ordinary, non-logarithmic Hochschild homology for quasiregular semiperfectoid pre-log rings.

\subsection{A log variant of a result of Antieau}
\label{subsec:logantieau}  
Perhaps the easiest example of a filtration in the spirit of \cite{BMS19} is the 
\emph{Hochschild--Kostant--Rosenberg-filtration} ${\rm Fil}_{\rm HKR}^{\bullet} {\rm HH}(A / R)$ on ordinary
Hochschild homology ${\rm HH}(A/R)$, 
with graded pieces given by the derived exterior powers of the algebraic cotangent complex ${\Bbb L}_{A / R}$. 
We obtained in \cite{BLPO} a similar filtration for log Hochschild homology and Gabber's cotangent complex.
These constructions are briefly reviewed in Section \ref{sec:loghh}.

Let $R \to A$ be a map of commutative rings. 
Generalizing a result of Bhatt--Morrow--Scholze \cite[Theorem 1.17]{BMS19} from the $p$-complete setting, 
Antieau \cite{Ant19} constructed a filtration on periodic cyclic homology ${\rm HP}(A / R)$ with graded pieces 
given by shifts of the Hodge-completed derived de Rham complex $\widehat{\Omega}_{A / R}$. 

Antieau's approach is appealing due to its generality; the only necessary input specific to the case of (the $S^1$-Tate construction of) Hochschild homology are 

\begin{enumerate}
\item the ${\rm HKR}$-theorem, which gives rise to the complete, exhaustive filtration $F^{\bullet}_{\rm HKR} {\rm HC}^-(A / R)$, called the \emph{HKR-filtration}, on negative cyclic homology;
\item the compatibility of the differential in the resulting chain complexes \cite[Example 2.4]{Ant19} \[0 \xrightarrow{} \Omega^n_{A / R} \xrightarrow{} \Omega^{n + 1}_{A / R} \xrightarrow{} \cdots\] with the de Rham differential; and
\item fpqc descent for Hochschild homology and its variants, in order to extend the result from the affine case to any qcqs $R$-scheme. 
\end{enumerate}

Due to our earlier work \cite{BLPO} and the results of the present paper, we now have analogs of all of the above facts in the context of logarithmic geometry. More specifically, if $(R, P) \to (A, M)$ is a map of pre-log rings, then

\begin{enumerate}
\item there is an HKR-theorem for log Hochschild homology ${\rm HH}((A, M) / (R, P))$, which gives rise to the complete, exhaustive \emph{HKR-filtration} \[F^\bullet_{\rm HKR} {\rm HC}^-((A, M) / (R, P))\] on log negative cyclic homology (\cite[Theorem 1.1]{BLPO}, see Theorem \ref{thm:loghkrfilt});
\item the chain complex resulting from the construction of \cite[Example 2.4]{Ant19} is of the form \[0 \to \Omega^n_{(A, M) / (R, P)} \xrightarrow{} \Omega^{n + 1}_{(A, M) / (R, P)} \xrightarrow{} \cdots\] and the differential is the log de Rham differential (Proposition \ref{prop:logderham}); and
\item a descent property for log Hochschild homology and its variants
allows us to extend the result from the ``log affine'' case under suitable hypotheses 
(Proposition \ref{prop:loghhflatdescent}).
\end{enumerate}

We refer to \cite[Section 7.4]{BLPO} for further discussion of the globalization procedure since we only construct 
the filtration in the ``log affine'' affine case for the present exposition.
Defining derived log de Rham cohomology as in \cite[Section 6]{Bha12}, 
Antieau's argument applies to obtain the following result, 
whose proof we also summarize at the end of Section \ref{sec:loghh}:

\begin{theorem}\label{thm:logantieau} Let $(R, P) \to (A, M)$ be a map of pre-log rings. The log negative and periodic cyclic homologies \[{\rm HC}^-((A, M) / (R, P)), \quad {\rm HP}((A, M) / (R, P))\] admit functorial, complete, decreasing filtrations with graded pieces equivalent to

\[\widehat{L\Omega}^{\ge n}_{(A, M) / (R, P)}[2n], \quad \widehat{L\Omega}_{(A, M) / (R, P)}[2n],\] 
respectively. Each filtered piece is itself equipped with a compatible filtration, which induces the Hodge filtration on the graded pieces under the equivalences above. 

The filtrations on  ${\rm HC}^-((A, M) / (R, P))$ and ${\rm HP}((A, M) / (R, P))$ are exhaustive if the Gabber cotangent complex ${\Bbb L}_{(A, M) / (R, P)}$ has ${\rm Tor}$-amplitude in $[-1, 0]$. 
\end{theorem}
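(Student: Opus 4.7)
The plan is to mimic Antieau's argument in \cite{Ant19}, which, once the three ingredients enumerated just before the theorem are available in the logarithmic setting, carries through essentially verbatim. The summary proof will therefore amount to checking that the log HKR-filtration of Theorem \ref{thm:loghkrfilt}, the identification of the differential in Proposition \ref{prop:logderham}, and the descent statement of Proposition \ref{prop:loghhflatdescent} supply exactly the input that Antieau's strategy demands, and then invoking \cite{Ant19} as a black box.

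First, I would start with the complete, exhaustive log HKR-filtration $F^\bullet_{\rm HKR} {\rm HC}^-((A, M) / (R, P))$ from Theorem \ref{thm:loghkrfilt}, whose graded pieces are shifts of derived exterior powers of Gabber's cotangent complex $\L_{(A, M) / (R, P)}$. I would then pass to the chain complex assembled from successive quotients of this filtration, as in \cite[Example 2.4]{Ant19}, and use Proposition \ref{prop:logderham} to identify its differential with the log de Rham differential. Taking Hodge completion produces $\widehat{L\Omega}^{\ge n}_{(A, M) / (R, P)}[2n]$ as the $n$-th graded piece of the resulting decreasing filtration on ${\rm HC}^-((A, M) / (R, P))$, and the Hodge filtration on each graded piece is inherited from the internal filtered structure on each $F^n_{\rm HKR} {\rm HC}^-$, exactly as in Antieau's non-logarithmic argument.

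To obtain the filtration on ${\rm HP}((A, M) / (R, P))$, I would apply the $S^1$-Tate construction to the filtered object produced above. Since the Tate construction is exact and interacts well with complete filtrations, the induced filtration on log periodic cyclic homology has graded pieces $\widehat{L\Omega}_{(A, M) / (R, P)}[2n]$ (with the Hodge truncation $\ge n$ removed), giving the desired description. The exhaustiveness claim under the ${\rm Tor}$-amplitude assumption follows because, when $\L_{(A, M) / (R, P)}$ has ${\rm Tor}$-amplitude in $[-1, 0]$, each derived exterior power $\Lambda^n \L_{(A, M) / (R, P)}$ is concentrated in bounded ${\rm Tor}$-amplitude, which is exactly the hypothesis Antieau isolates to control the colimit identifying the filtration with the full unfiltered object.

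The main obstacle is the identification of the differential between successive graded pieces with the log de Rham differential: this is the substantive new input beyond the classical case, encoded in Proposition \ref{prop:logderham}, and everything else in Antieau's strategy is then formal. The remaining globalization subtlety, namely extending the ``log affine'' filtration via descent, is addressed by Proposition \ref{prop:loghhflatdescent}, although as noted we only record the affine case in this exposition and defer globalization to \cite[Section 7.4]{BLPO}.
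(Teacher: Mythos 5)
Your proposal matches the paper's own proof of Theorem \ref{thm:logantieau}: both combine the log HKR-filtration (Theorem \ref{thm:loghkrfilt}) with Antieau's \cite[Example 2.4]{Ant19}, use Proposition \ref{prop:logderham} to identify the differential on the resulting graded pieces with the log de Rham differential, and defer the remaining formal steps (including the filtration on ${\rm HP}$ and the exhaustiveness criterion) to \cite{Ant19} as a black box. One small imprecision: ${\rm HP}$ is not literally obtained by applying the $S^1$-Tate construction to the filtered ${\rm HC}^-$; Antieau constructs the filtrations on ${\rm HC}^-$ and ${\rm HP}$ in parallel via the Beilinson $t$-structure, but since you invoke \cite{Ant19} for the construction anyway this does not affect the argument.
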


We will give a full, independent proof of Theorem \ref{thm:logantieau} in the $p$-complete setting in Section \ref{sec:logderham}, following the proof strategy of \cite[Section 5]{BMS19}. 

\subsection{The log quasisyntomic site} 
When constructing filtrations on variants of log \emph{topological} Hochschild homology, 
it is convenient to work with the log quasisyntomic site for pre-log rings. 
Its definition and main properties are discussed in Section \ref{ssec:QSynlog}. 
For $(A, M)$ a log quasisyntomic pre-log ring, the log quasisyntomic covers turns 
the category of log quasisyntomic $(A, M)$-algebras into a site.
This construction is inspired by and analogous to the non-log situation in \cite{BMS19}. 

There is a convenient basis for the topology on the log quasisyntomic site given by 
\emph{log quasiregular semiperfectoid rings} (see Definition \ref{def:logqrsperfd}). 
As in the non-log setting, 
there exists a surjection from some perfectoid ring onto the underlying commutative ring of 
every log quasiregular semiperfectoid ring $(S, Q)$.
In addition, the commutative monoid $Q$ is semiperfect;
its $p$-power map is surjective.

Examples of log quasisyntomic morphisms include 
morphisms that are smooth and lci in the eyes of the Gabber cotangent complex. Under integrality hypotheses, this recovers the usual notion of log smooth, see e.g.\ \cite[Section 4]{BLPO}. 
A version of the log quasisyntomic site is also independently pursued in  the work of 
Koshikawa--Yao \cite{KY22}: up to minor choices of technical conventions, the constructions agree.

\subsection{Deforming log ${\rm THH}$ to ordinary Hochschild homology}
\label{subsec:deformlogthhtohh}
One important step in constructing the filtrations of \cite{BMS19} is analyzing the topological Hochschild homology of 
quasiregular semiperfectoid rings $S$. In turn, this is achieved using the cofiber sequence 
\begin{equation}\label{cofiberintro}{\rm THH}(S ; {\Bbb Z}_p)[2] \xrightarrow{u} {\rm THH}(S ; {\Bbb Z}_p) \xrightarrow{} {\rm HH}(S / R ; {\Bbb Z}_p)\end{equation}
of \cite[Theorem 6.7]{BMS19}, exhibiting ${\rm THH}$ as a $1$-parameter deformation  of \emph{relative} ${\rm HH}$, where $u \in \pi_2{\rm THH}(R ; {\Bbb Z}_p)$ is a generator for $\pi_*{\rm THH}(R ; {\Bbb Z}_p) \cong R[u]$, 
with $R \to S$ a chosen surjection from some perfectoid ring $R$.  

If $(S, Q)$ is a log quasiregular semiperfectoid ring, 
there exists a surjection $R\to S$, 
where $R$ is a perfectoid ring. 
The conditions on the underlying monoid $Q$ imply that the canonical map from its \emph{tilt} $Q^\flat$ to $Q$ 
is a surjection. 
In particular, 
the canonical map $R\langle Q^\flat \rangle \to S$ is a surjection from a perfectoid ring, 
which can be used to define $u$ in the cofiber sequence \eqref{cofiberintro}.
Here $R\langle Q^\flat \rangle$ is a standard notation for the $p$-complete version of the monoid ring 
$R[Q^{\flat}]$.

\begin{theorem}\label{thm:logthhdefhh} Let $(S, Q)$ be a log quasiregular semiperfectoid pre-log ring. There is a cofiber sequence \[{\rm THH}((S, Q) ; {\Bbb Z}_p)[2] \xrightarrow{u} {\rm THH}((S, Q) ; {\Bbb Z}_p) \xrightarrow{} {\rm HH}(S / R\langle Q^{\flat, \rm rep} \rangle ; {\Bbb Z}_p),\] where $Q^{\flat, \rm rep} \to Q$ is the repletion (or exactification) of the map $Q^\flat \to Q$. 
\end{theorem}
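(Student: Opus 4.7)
The plan is to adapt the argument of \cite[Theorem 6.7]{BMS19} to the log setting, using a ``log-perfectoid'' surjection onto $(S,Q)$ in place of the perfectoid surjection $R\to S$. Choose such an $R\to S$, which exists by Definition~\ref{def:logqrsperfd}; since $(S,Q)$ is log quasiregular semiperfectoid, the canonical map $Q^\flat \to Q$ from the tilt is surjective. Passing to the repletion produces a surjection of pre-log rings
\[
\varphi\colon (R\langle Q^{\flat,\rep}\rangle, Q^{\flat,\rep}) \longrightarrow (S,Q),
\]
which is strict because $Q^{\flat,\rep}\to Q$ is exact by construction of the repletion.

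The main input is a log Bökstedt computation for the source, namely
\[
\pi_*\THH\bigl((R\langle Q^{\flat,\rep}\rangle, Q^{\flat,\rep});\bZ_p\bigr) \;\cong\; R\langle Q^{\flat,\rep}\rangle[u], \qquad |u|=2.
\]
This should follow from the monoid-ring description of log THH (cf.\ \cite{Rog09}), which identifies log THH of $(R\langle M\rangle, M)$ with $\THH(R)\otimes B^{\cy,\rep}(M)$, combined with the classical Bökstedt--Hesselholt--Madsen computation for the perfectoid $R$ and the fact that after $p$-completion the replete cyclic bar construction on the uniquely $p$-divisible monoid $Q^{\flat,\rep}$ is concentrated in degree zero.

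Once this input is in place, the cofiber sequence follows by a formal base-change argument. Since $\varphi$ is strict, log HH of $(S,Q)$ relative to $(R\langle Q^{\flat,\rep}\rangle, Q^{\flat,\rep})$ coincides with the non-log HH of the underlying rings:
\[
\HH\bigl((S,Q)/(R\langle Q^{\flat,\rep}\rangle, Q^{\flat,\rep});\bZ_p\bigr) \;\simeq\; \HH\bigl(S/R\langle Q^{\flat,\rep}\rangle;\bZ_p\bigr).
\]
On the other hand, because log HH is obtained from log THH by base change along $\bS\to\bZ$, the left-hand side is equivalent to $\THH((S,Q);\bZ_p) \otimes_{\THH((R\langle Q^{\flat,\rep}\rangle, Q^{\flat,\rep});\bZ_p)} R\langle Q^{\flat,\rep}\rangle$. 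Combined with the log Bökstedt computation, which identifies this base change with the quotient by $u$, we obtain the desired cofiber sequence.

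The main obstacle is the log Bökstedt computation: verifying that $(R\langle Q^{\flat,\rep}\rangle, Q^{\flat,\rep})$ really behaves as a perfectoid base for log THH, with polynomial homotopy on a single class $u$ in degree~$2$. This requires a careful analysis of repletion of cyclic bar constructions and of the fact that the tilt $Q^\flat$ does not produce additional positive-degree homotopy after repletion and $p$-completion. Once this is in hand, the remainder of the argument is a formal base-change manipulation in $\THH((R\langle Q^{\flat,\rep}\rangle, Q^{\flat,\rep});\bZ_p)$-modules parallel to \cite[Section~6]{BMS19}.
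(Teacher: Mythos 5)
Your approach is genuinely different from the paper's, and both routes can be made to work, so let me compare them and then flag the one inaccuracy in your setup.

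\textbf{What the paper does.} The paper applies $-\otimes_{\THH(R\langle Q^\flat\rangle;\Z_p)}\THH((S,Q);\Z_p)$ to the cofiber sequence $\THH(R\langle Q^\flat\rangle;\Z_p)[2]\xrightarrow{u}\THH(R\langle Q^\flat\rangle;\Z_p)\to R\langle Q^\flat\rangle$ of \cite[Theorem~6.7]{BMS19}, valid because $R\langle Q^\flat\rangle$ is an honest perfectoid ring (as $Q^\flat$ is perfect). By the transitivity formula (Proposition~\ref{lem:transitivitylogthh}) the cofiber term is $\HH((S,Q)/R\langle Q^\flat\rangle;\Z_p)$; this is then reidentified as $\HH(S/R\langle Q^{\flat,\rep}\rangle;\Z_p)$ by Proposition~\ref{prop:loghhvsordinaryhh}, which combines Corollary~\ref{cor:perfectvanishing}, the $p$-complete replete base change (Proposition~\ref{prop:loghhrepletebasechange}), and strictness of $Q^{\flat,\rep}\to Q$. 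You instead want to work over $(R\langle Q^{\flat,\rep}\rangle,Q^{\flat,\rep})$ from the start, establish a ``log B\"okstedt'' computation $\pi_*\THH((R\langle Q^{\flat,\rep}\rangle,Q^{\flat,\rep});\Z_p)\cong R\langle Q^{\flat,\rep}\rangle[u]$, and base change. The paper's route is more economical because it never needs a B\"okstedt-type result outside the perfectoid world: the repletion only enters when one reidentifies the cofiber term, after the cofiber sequence has already been built. Your route puts the burden on a new computation, and that computation is most naturally proved by replete base change from $(R\langle Q^\flat\rangle,Q^\flat)$ via Proposition~\ref{prop:logthhrepletebasechange} and Corollary~\ref{cor:perfectvanishing} --- i.e.\ by the very ingredients the paper uses directly. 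So the two approaches are essentially rearrangements of the same toolkit.

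\textbf{The inaccuracy.} Your justification for the log B\"okstedt computation says the replete cyclic bar construction on ``the uniquely $p$-divisible monoid $Q^{\flat,\rep}$'' is $p$-completely concentrated in degree zero. But $Q^{\flat,\rep}$ is \emph{not} uniquely $p$-divisible in general; indeed, if it were, $R\langle Q^{\flat,\rep}\rangle$ would be perfectoid, and Remark~\ref{rmk:comment_Qflat_to_Q_exact} explains that this happens only when $Q^\flat\to Q$ is already exact. Concretely, $Q^{\flat,\rep}$ embeds as a submonoid of the uniquely $p$-divisible group $(Q^\flat)^{\gp}$, so $p$-multiplication is injective, but it need not be surjective: given $g\in(Q^\flat)^{\gp}$ whose image lies in $Q$, the image of $g/p$ in $Q^{\gp}$ is only one of several $p$-th roots of the image of $g$ and need not land in $Q$. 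What \emph{is} true --- and is what your argument actually needs --- is that the group completion $(Q^{\flat,\rep})^{\gp}=(Q^\flat)^{\gp}$ is uniquely $p$-divisible, since repletion does not change the group completion. Then $B^{\rep}(Q^{\flat,\rep})\simeq Q^{\flat,\rep}\times B\bigl((Q^{\flat})^{\gp}\bigr)$ by the splitting used in \cite[Lemma~5.10]{BLPO}, and $B\bigl((Q^\flat)^{\gp}\bigr)$ is $p$-completely contractible. With that corrected, your argument can be completed, but please also note that the decomposition $\THH(R\langle M\rangle,M)\simeq\THH(R)\otimes\S[B^{\rep}(M)]$ you use is obtained by passing through $\THH(R\langle M\rangle)\simeq\THH(R)\otimes\S[B^{\cy}(M)]$ and Proposition~\ref{lem:rognescomp}, and requires $p$-completion at both steps; as written your ``$\otimes$'' blurs this, and a careful reader would want those completions spelled out.
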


The above result serves as evidence that we have captured the correct notions in the logarithmic context. Indeed, as we will see in the proof, it is equivalent to the log analog \[{\rm THH}((S, Q) ; {\Bbb Z}_p)[2] \xrightarrow{u} {\rm THH}((S, Q) ; {\Bbb Z}_p) \xrightarrow{} {\rm HH}((S, Q) / R \langle Q^\flat \rangle ; {\Bbb Z}_p)\] of the cofiber sequence \eqref{cofiberintro}, which is in fact a cofiber sequence for any algebra over a perfectoid ring. 

More importantly, Theorem \ref {thm:logthhdefhh} lets us view ${\rm THH}((S, Q) ; {\Bbb Z}_p)$ as a $1$-parameter deformation 
of \emph{ordinary}, non-logarithmic Hochschild homology relative to the base ring $R\langle Q^{\flat, \rm rep} \rangle$ which 
need not be a perfectoid ring.
In some sense, 
the failure of this ring being perfectoid measures the difference between the theory developed here and its 
non-logarithmic analog, see Remark \ref{rmk:comment_Qflat_to_Q_exact}.

The techniques leading to Theorem \ref{thm:logthhdefhh} appear throughout the paper: 
See, for instance, 
Proposition \ref{prop:qrqsrep}, Proposition \ref{prop:loghhvsordinaryhh}, 
and in particular Proposition \ref{prop:filterlogthhcotcx}, which gives a filtration on log ${\rm THH}$ 
with graded pieces described in terms of the 
ordinary, non-logarithmic cotangent complex. 
We refer to Examples \ref{exa:qrsperfd} and \ref{exa:postrepletion} for an example of the repletion procedure 
in this context. 
\medskip

There are several variants of topological Hochschild homology, such as log THH in \cite{Rog09} and the approaches to $\rm THH$ using Waldhausen categories by Hesselholt--Madsen \cite{HM03} and Blumberg--Mandell \cite{BM08}. Among these, Rognes' approach is unique in the sense that it manipulates the \emph{output} of the cyclic bar construction (by means of repletion) as opposed to its input. This has also served as the main philosophical obstacle in relating log THH to $K$-theoretic invariants. For this reason, we find it very appealing that the repletion procedure appears in the \emph{input} of Hochschild homology in the cofiber term in Theorem \ref{thm:logthhdefhh}.
\medskip

Variants of the commutative ring $R\langle Q^{\flat, \rm rep} \rangle$ naturally appear in other contexts. 
For example, if $S$ is a quasiregular semiperfect ring,
by \cite[Theorem 8.17]{BMS19} there is a natural isomorphism 
\[\widehat{\Prism}_S \cong \widehat{\Bbb A}_{\rm crys}(S)\] 
relating the Nygaard-complete prismatic cohomology $\widehat{\Prism}_S$ and the
(Nygaard-completion of the $p$-adic completion of) the divided power envelope of Fontaine's 
period map $W(S^\flat)\to S$. 
The above is the key step in the crystalline comparison of \cite[Theorem 1.2(3)]{BMS19}. 
In future work,
Diao--Yao \cite{DY23} defines ${\Bbb A}_{\rm crys}(S, Q)$ as
the divided power envelope of $W(S^\flat)\langle Q^{\flat, \rm rep} \rangle \to S$. 
For this reason, 
the techniques leading to Theorem \ref{thm:logthhdefhh} will be  useful in establishing a crystalline comparison 
in our context. 
Moreover, as explained in Remark \ref{rem:koshikawayao}, 
this should also prove useful in comparing our construction with the future work of Koshikawa--Yao, 
in which log prismatic cohomology is developed using Koshikawa's log prismatic site \cite{Kos22}. 
We hope to relate our construction to the Nygaard-completion of Koshikawa--Yao's derived log prismatic cohomology.

\subsection{The log filtrations} The log variants of quasiregular semiperfectoid and quasisyntomic enjoy many formal properties similar to those of their non-log counterparts considered in \cite{BMS19}. For example, there is an \emph{unfolding} equivalence, which identifies sheaves defined on log quasiregular semiperfectoid pre-log rings with sheaves defined on the log quasisyntomic site. See Theorem \ref{thm:unfolding}. We denote the latter site by ${\rm lqsyn}$. 
For example, the functor $(S, Q) \mapsto \pi_0{\rm TC}^-((S, Q) ; {\Bbb Z}_p)$ is a sheaf defined on log quasiregular semiperfectoid rings, and hence unfolds to a sheaf on ${\rm lqsyn}$. 

In analogy with the definition of $\widehat{\Prism}_A$ in \cite{BMS19}, 
we define the \emph{Nygaard-complete log prismatic cohomology} of a log quasisyntomic pre-log ring $(A, M)$ by
\[\widehat{\Prism}_{(A, M)} := R\Gamma_{{\rm lqsyn}}((A, M), \pi_0{\rm TC}^-((-, -) ; {\Bbb Z}_p)).\] The homotopy fixed points spectral sequence computing the group 
$\pi_0{\rm TC}^-((A, M) ; {\Bbb Z}_p)$ gives rise to a 
complete filtration ${\rm Fil}_N^{\ge \bullet} \widehat{\Prism}_{(A, M)}$ of $\widehat{\Prism}_{(A, M)}$.
Moreover, variants of topological Hochschild homology applied to log quasiregular semiperfectoid rings enjoy evenness properties analogous to those of \cite{BMS19}, and so we will make similar use of the double-speed Postnikov filtration (indicated by $\tau_{\ge 2*}$).  
\medskip

The following result is our logarithmic analog of \cite[Theorem 1.12]{BMS19}, 
which we state in full for ease of reference. The statement makes use of Breuil--Kisin twists $\widehat{\Prism}_{(A, M)}\{n\}$, see Remark \ref{rem:breuilkisin}.

\begin{theorem}\label{thm:mainthm} Let $(A, M)$ be a quasisyntomic pre-log ring.
\begin{enumerate}
\item Locally on ${\rm lqSyn}_{(A, M)}$, we have that ${\rm THH}((-, -) ; {\Bbb Z}_p), {\rm TC}^-((-, -) ; {\Bbb Z}_p)$ and ${\rm TP}((-, -) ; {\Bbb Z}_p)$ are concentrated in even degrees. 
\item The filtrations 
\begin{align*} {\rm Fil}^*{\rm THH}((A, M) ; {\Bbb Z}_p) := R\Gamma_{\rm lqsyn}((A, M), \tau_{\ge 2*}{\rm THH}((-, -) ; {\Bbb Z}_p)) \\  {\rm Fil}^*{\rm TC}^-((A, M) ; {\Bbb Z}_p) := R\Gamma_{\rm lqsyn}((A, M), \tau_{\ge 2*}{\rm TC}^-((-, -) ; {\Bbb Z}_p)) \\  {\rm Fil}^*{\rm TP}((A, M) ; {\Bbb Z}_p) := R\Gamma_{\rm lqsyn}((A, M), \tau_{\ge 2*}{\rm TP}((-, -) ; {\Bbb Z}_p))\end{align*} are complete, exhaustive, decreasing filtrations.
\item The associated graded of the filtrations in part (2) participate in natural isomorphisms \begin{align*}{\rm gr}^n{\rm THH}((A, M) ; {\Bbb Z}_p) \cong {\rm Fil}_N^{n} \widehat{\Prism}_{(A, M)}\{n\}[2n], \\ {\rm gr}^n {\rm TC}^-((A, M) ; {\Bbb Z}_p) \cong {\rm Fil}_N^{\ge n} \widehat{\Prism}_{(A, M)}\{n\}[2n], \\ {\rm gr}^n {\rm TP}((A, M) ; {\Bbb Z}_p) \cong \widehat{\Prism}_{(A, M)}\{n\}[2n].\end{align*}
\item The cyclotomic Frobenius and canonical maps \[\varphi_p, {\rm can} \colon {\rm TC}^-((A, M) ; {\Bbb Z}_p) \to {\rm TP}((A, M) ; {\Bbb Z}_p)\] descend to the associated graded, 
and hence we obtain a filtration on log topological cyclic homology ${\rm TC}((A, M) ; {\Bbb Z}_p)$. 
Writing ${\Bbb Z}_p(n)(A, M)$ for the associated graded, we have \[{\Bbb Z}_p(n)(A, M) = {\rm hofib}(\varphi_p - {\rm can} \colon {\rm Fil}^{\ge n}_N \widehat{\Prism}_{(A, M)}\{n\} \to \widehat{\Prism}_{(A, M)}\{n\}).\] 
\end{enumerate} 
\end{theorem}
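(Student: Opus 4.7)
The plan is to adapt the proof of \cite[Theorem 1.12]{BMS19} to the log setting, using the unfolding equivalence (Theorem~\ref{thm:unfolding}) to reduce every assertion to the case of a log quasiregular semiperfectoid pre-log ring $(S,Q)$. Once evenness of ${\rm THH}((S,Q);{\Bbb Z}_p)$ is established on this basis, evenness of ${\rm TC}^-((S,Q);{\Bbb Z}_p)$ and ${\rm TP}((S,Q);{\Bbb Z}_p)$ follows from the degeneration of the homotopy fixed point and Tate spectral sequences for the $S^1$-action, giving (1). The filtrations appearing in (2) are then the sheafifications on ${\rm lqsyn}$ of the double-speed Postnikov filtrations; on the basis, evenness makes these filtrations complete, exhaustive, and decreasing with graded pieces $\pi_{2n}(-)[2n]$, and these properties propagate to the globalized filtrations via unfolding.

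For the evenness of ${\rm THH}((S,Q);{\Bbb Z}_p)$, I would exploit the cofiber sequence of Theorem~\ref{thm:logthhdefhh},
\[
{\rm THH}((S,Q);{\Bbb Z}_p)[2] \xrightarrow{\ u\ } {\rm THH}((S,Q);{\Bbb Z}_p) \longrightarrow {\rm HH}(S / R\langle Q^{\flat,{\rm rep}}\rangle ;{\Bbb Z}_p),
\]
and show that the cofiber term is concentrated in even degrees. This is the log counterpart of \cite[Proposition 8.12]{BMS19}: the definition of log quasiregular semiperfectoid (Definition~\ref{def:logqrsperfd}), combined with the repletion analysis in Proposition~\ref{prop:qrqsrep} and Proposition~\ref{prop:loghhvsordinaryhh}, guarantees that the $p$-completed cotangent complex of $R\langle Q^{\flat,{\rm rep}}\rangle \to S$ is concentrated in a single homological degree and is flat over $S$. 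The HKR-type filtration of Proposition~\ref{prop:filterlogthhcotcx} then has even-degree graded pieces, so ${\rm HH}(S / R\langle Q^{\flat,{\rm rep}}\rangle ;{\Bbb Z}_p)$ is even after $p$-completion; an induction along $u$, which sits in degree $2$, transports evenness back to ${\rm THH}$.

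For (3), evenness forces the HFPSS and Tate spectral sequences to degenerate on the basis, so by construction $\pi_{2n}{\rm TC}^-((S,Q);{\Bbb Z}_p)$ is ${\rm Fil}_N^{\ge n}\widehat{\Prism}_{(S,Q)}\{n\}$ and $\pi_{2n}{\rm TP}((S,Q);{\Bbb Z}_p)$ is $\widehat{\Prism}_{(S,Q)}\{n\}$, with the Breuil--Kisin twists arising from the cyclotomic structure as recorded in Remark~\ref{rem:breuilkisin}; the analogous description of ${\rm gr}^n{\rm THH}$ follows from the Postnikov cofiber sequence $\tau_{\ge 2n+2}X \to \tau_{\ge 2n}X \to \pi_{2n}X[2n]$ together with the identification of $\pi_{2n}{\rm THH}$ with ${\rm Fil}_N^n \widehat{\Prism}_{(S,Q)}\{n\}$ via the cofiber sequence of Theorem~\ref{thm:logthhdefhh}. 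Globalizing using unfolding and the descent result Proposition~\ref{prop:loghhflatdescent} (together with its cyclotomic variant) yields the natural isomorphisms in (3). For (4), on the basis both ${\rm can}$ and $\varphi_p$ preserve the double-speed Postnikov filtrations, since source and target are even, so they descend to the associated graded; taking the homotopy fiber of $\varphi_p-{\rm can}$ in filtered spectra and reading off the $n$-th filtered piece gives the displayed formula for ${\Bbb Z}_p(n)(A,M)$, using the defining equalizer presentation of ${\rm TC}$.

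The principal obstacle is the evenness assertion, and within it the control of the cofiber ${\rm HH}(S / R\langle Q^{\flat,{\rm rep}}\rangle ;{\Bbb Z}_p)$; once this is in hand, the rest of the argument is a formal adaptation of \cite[Section 7]{BMS19} via the log quasisyntomic site. The subtle point is that the base ring in this relative Hochschild homology is $R\langle Q^{\flat,{\rm rep}}\rangle$ rather than $R\langle Q^\flat\rangle$, so one must verify that passing to the repletion $Q^{\flat,{\rm rep}}$ keeps the cotangent complex of $R\langle Q^{\flat,{\rm rep}}\rangle \to S$ well-behaved enough to apply the HKR-type filtration—this is precisely the content of the log quasiregular semiperfectoid conditions, and is what makes the whole BMS-style filtration machinery carry over to the logarithmic setting.
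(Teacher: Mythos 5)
Your proposal is correct and has the same high-level architecture as the paper's argument (reduce to log quasiregular semiperfectoid pre-log rings via Proposition~\ref{prop:logquasicover} and unfolding, establish evenness on the basis, degenerate the homotopy fixed point and Tate spectral sequences, unfold back). Where you diverge is in the route to evenness of $\pi_*{\rm THH}((S,Q);{\Bbb Z}_p)$: the paper's Theorem~\ref{thm:bms19thm71} proves this directly from the complete filtration with graded pieces $(\wedge_S^i{\Bbb L}_{(S,Q)/R})^\wedge_p$, the point being that each of these is $p$-completely flat and concentrated in homological degree $i$ by Lemma~\ref{lem:wedgepowersflat}, so the associated graded lies in even degrees. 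You instead route through the cofiber sequence of Theorem~\ref{thm:logthhdefhh}, prove evenness of the cofiber ${\rm HH}(S/R\langle Q^{\flat,\rm rep}\rangle;{\Bbb Z}_p)$, and then transport back along $u$ by induction starting from $\pi_{-1}{\rm THH}=0$. Both work and both ultimately reduce to the same flatness input, but yours is a bit of a detour: going through the repletion $Q^{\flat,\rm rep}$ is what gives the conceptually illuminating identification with \emph{non-log} Hochschild homology in Theorem~\ref{thm:logthhdefhh}, but for the bare evenness claim one does not need it, since the log HKR filtration over the arbitrary perfectoid base $R$ already suffices.

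One point to tidy up: you cite Proposition~\ref{prop:filterlogthhcotcx} to obtain evenness of ${\rm HH}(S/R\langle Q^{\flat,\rm rep}\rangle;{\Bbb Z}_p)$, but that proposition is a filtration on ${\rm THH}((S,Q);{\Bbb Z}_p)$ itself. If you invoke it, you already get evenness of ${\rm THH}$ directly and the $u$-induction is redundant; if you want the cofiber-sequence-plus-induction route, the evenness of the ${\rm HH}$ term should instead come from Corollary~\ref{cor:loghhoddvanish} together with Lemma~\ref{syntomic.8} and the identification of Proposition~\ref{prop:loghhvsordinaryhh}. The rest of your outline for parts (2)--(4) matches the paper's reduction and agrees with Proposition~\ref{prop:mainthmperfectoidbase}.
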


\subsection{Comparison with the even filtration} In light of the theory of logarithmic ring spectra developed by Rognes \cite{Rog09} and Rognes--Sagave--Schlichtkrull \cites{RSS15, RSS18}, it is very natural to ask how the filtrations constructed here fit in the framework of Hahn--Raksit--Wilson's \emph{even filtration} \cite{HRW22}, and how variants of the even filtration may contribute to computations involving logarithmic ring spectra. We hope to pursue these questions in the future.  

\subsection{Conventions}Most of the results in this paper are stated for ordinary (commutative) rings, but it is possible to generalize them for derived or animated rings in many cases. When necessary, we will model animated rings using simplicial commutative rings in order to apply the general framework of derived logarithmic geometry as developed in \cite{SSV16} (and used in \cite{BLPO}).

Given a commutative ring $R$, we write $\GL_1(R)$ for the multiplicative group of invertible elements of $R$, while for a commutative monoid $M$ we write ${\rm GL}_1(M)$ for its submonoid of invertible elements. 

\subsection{Acknowledgments}
The authors wish to thank Teruhisa Koshikawa for several friendly conversations around the content of \cite{KY22}. 
F.B.~and T.L.~wish to thank Ben Antieau for inspiring discussions on the subject of this paper. T.L~thanks Steffen Sagave for helpful discussions relating to this material. The authors would like to thank an anonymous referee for a quick and thorough report which contained many valuable comments and corrections.  D.P.~and T.L.~were partially supported by the research training group GRK 2240 ``Algebro-Geometric methods in Algebra, Arithmetic and Topology.'' 
P.A.{\O}. acknowledges the support of the  RCN Project no. 312472 ``Equations in Motivic Homotopy Theory.''

\section{Logarithmic Hochschild homology}\label{sec:loghh} We begin by reviewing some of the results of \cite{BLPO}. In addition, we describe a useful base change property of the Gabber cotangent complex (Lemma \ref{lem:repletebasechange}), essentially due to Rognes. We also prove that the circle action on log Hochschild homology is compatible with the log de Rham differential (Proposition \ref{prop:logderham}). 

\subsection{Pre-log rings} A \emph{pre-log ring} $(R, P, \beta)$ is the datum of commutative ring $R$, a commutative monoid $P$ and a map $\beta \colon P \to (R, \cdot)$ of commutative monoids to the underlying multiplicative monoid of $R$. It is a \emph{log ring} if the canonical map $\beta^{-1}({\rm GL}_1(R)) \to {\rm GL}_1(R)$ is an isomorphism, and a pre-log ring $(R, P, \beta)$ gives rise to a log ring $(R, P^a, \beta^a)$ by defining $P^a$ to be the (non-derived) pushout of the diagram $P \xleftarrow{} \beta^{-1}({\rm GL}_1(R)) \xrightarrow{} {\rm GL}_1(R)$. We call $(R, P^a, \beta^a)$ the \emph{logification} of $(R, P, \beta)$. 

A \emph{morphism} $(f, f^\flat) \colon (R, P, \beta) \to (A, M, \alpha)$ of pre-log rings consists of a ring map $f$ and a monoid map $f^\flat$ such that the obvious diagram commutes. A monoid map $P \to M$ is \emph{strict} if $P/{\rm GL}_1(P) \to M/{\rm GL}_1(M)$ is an isomorphism \cite[Definition I.4.1.1]{Ogu18}. If $f^\flat \colon P \to M$ is strict, then the logification of the pre-log structure $M \to (A, \cdot)$ is isomorphic to that of $P \xrightarrow{f^\flat} M \xrightarrow{} (A, \cdot)$. 

Given a commutative monoid $M$, we shall denote by $\gamma \colon M \to M^{\rm gp}$ the canonical map to its group completion $M^{\rm gp}$.

\subsection{Animated pre-log rings} Let ${\rm Poly}_{(R, P)}$ denote the category of \emph{polynomial} $(R, P)$-algebras, that is, those of the form $(R[x_1, \dots, x_m, y_1, \dots, y_n], P \oplus \langle y_1, \dots, y_n \rangle)$. In analogy with the definition of animated commutative $R$-algebras (see e.g., \cite[Section 5.1.4]{Cesnav-Scholze_purity}), we define the category of \emph{animated $(R, P)$-algebras} ${\rm PreLog}_{(R, P)}^{\rm ani}$ to be that of finite product-preserving functors \[{\rm Poly}_{(R, P)}^{\rm op} \to {\rm Ani}\] to the $\infty$-category of anima. The category ${\rm PreLog}_{(R, P)}^{\rm ani}$ is the $\infty$-category underlying the simplicial model category of simplicial pre-log rings of \cite[Section 3]{SSV16}. The latter model is used in \cite{BLPO} to 
model various functors involving ${\rm PreLog}_{(R, P)}^{\rm ani}$. 

As specified in our conventions, all (relative) coproducts involving monoids and (pre-log) rings should be considered in the derived (or animated) sense. 

\subsection{Integral monoid maps are flat} A (discrete) commutative monoid $P$ is \emph{integral} if the canonical map $P \to P^{\rm gp}$ is injective, and a map $P \to M$ of integral monoids is \emph{integral} if the monoid $\pi_0(M \oplus_P N)$ is integral for any map $P \to N$ with $N$ integral. If $P \to M$ is an integral map of integral monoids, then the truncation map \begin{equation}\label{pushtrunc}M \oplus_P N \to \pi_0(M \oplus_P N)\end{equation} is an equivalence for any map of commutative monoids $P \to N$, see \cite[Proposition 4.9]{Bha12}. 

\subsection{Faithfully flat morphisms of pre-log rings} We now introduce the class of morphisms of pre-log rings that will play the role of faithfully flat morphisms of ordinary commutative rings. To motivate the following definition, let us record two observations: 

\begin{enumerate}
\item It is reasonable to impose a flatness condition on monoid maps $P \to M$ in terms of the truncation map \eqref{pushtrunc} being an equivalence: Indeed, this is always satisfied for integral maps of integral monoids. 
\item Invariants of pre-log rings tend to pass through an exactification procedure (Section \ref{subsec:repletion}), ensuring that the output lives in a category only dependent on the underlying commutative ring. For example, we consider the log cotangent complex (Section \ref{subsec:cotcx}) ${\Bbb L}_{(A, M) / (R, P)}$ an $A$-module, as opposed to an object of some hypothetical category of modules over the pre-log ring $(A, M)$. This suggests that it suffices to impose a notion of faithfulness on the underlying commutative rings: This point will become very clear in the proof of Theorem \ref{thm:gabberdescent}. 
\end{enumerate}

\noindent Based on this, we introduce the following definition:

\begin{definition}\label{def:logff} Let $(f, f^\flat) \colon (R, P) \to (A, M)$ be a map of pre-log rings. We say that
\begin{enumerate}
\item\label{eq:Bhatt-flatness}   $f^\flat$ is \emph{flat} if \eqref{pushtrunc} is an equivalence for all monoid maps $P \to N$;
\item $(f, f^\flat)$ is \emph{homologically log flat} if both $f$ and $f^\flat$ are flat; and
\item $(f, f^\flat)$ is \emph{homologically log faithfully flat} if $f$ is faithfully flat and $f^\flat$ is flat. 
\end{enumerate}

\noindent We refer to the resulting topology as the \emph{homologically log flat} or \emph{hlf} topology. Note that \eqref{eq:Bhatt-flatness} is taken from \cite[Definition 4.8]{Bha12}.
\end{definition}

\begin{remark} Our first name for the above topology was ``log fpqc''. This comes with some disadvantages: It clashes with the terminology of Molcho--Temkin \cite{MT21} and differs from the terminology of Koshikawa--Yao \cite{KY22} for the same notion. As recorded in e.g., \cite[Example 4.9]{BLPO}, there are classically log flat morphisms that fail to be homologically log flat, while the discrepancy disappears for integral maps. We, therefore, choose to borrow Koshikawa--Yao's terminology for the above notion, which they attribute to Ofer Gabber. 
\end{remark}

\subsection{The logarithmic cotangent complex}\label{subsec:cotcx} We now review the Gabber cotangent complex. One reference is Olsson \cite[Section 8]{Ols05}. The exposition here is closer in spirit to that of \cite[Section 3]{BLPO} or \cite[Section 4]{SSV16}. We refer to these sources for proof of the statements made here. 

Let $(f, f^\flat) \colon (R, P) \to (A, M)$ be a map of pre-log rings. The \emph{module of relative log differentials} $\Omega^1_{(A, M) / (R, P)}$ is the $A$-module \[\frac{\Omega^1_{A / R} \oplus \pi_0(A \otimes_{\Bbb Z} (M^{\rm gp}/P^{\rm gp}))}{{\rm d}\alpha(m) \sim \alpha(m) \otimes \gamma(m)},\] where $\alpha \colon M \to (A, \cdot)$ is the structure map and $\gamma \colon M \to M^{\rm gp}$ is the canonical map from $M$ to its group completion $M^{\rm gp}$.

This extends levelwise to a functor \[\Omega^1_{(-, -) / (R, P)} \colon {\rm sPreLog}_{(R, P)/} \to {\rm sMod}_R\] from the category of simplicial pre-log $(R, P)$-algebras to that of simplicial $R$-modules. It is left Quillen, and its total derived functor is the \emph{Gabber cotangent complex} ${\Bbb L}_{(- , -) / (R, P)}$. The associated functor of $\infty$-categories is equivalent to the left Kan extension of the functor $\Omega^1_{(-, -) / (R, P)}$ defined only on polynomial pre-log $(R, P)$-algebras $(R[x_1, \dots, x_m, y_1, \dots, y_n], \langle y_1, \dots, y_n \rangle)$ to all animated pre-log $(R, P)$-algebras. If the base pre-log ring has the trivial pre-log structure we omit the pre-log structure from the notation: That is, we set ${\Bbb L}_{(A, M) / R} := {\Bbb L}_{(A, M) / (R, \{1\})}$. 

This construction enjoys the following properties:  

\begin{enumerate}
\item For polynomial $(R, P)$-algebras \[(A, M) := (R[x_1, \dots, x_m, y_1, \dots, y_n], P \oplus \langle y_1, \dots, y_n \rangle)\] there is a canonical identification \[{\Bbb L}_{(A, M) / (R, P)} = \Omega^1_{(A, M) / (R, P)} = \bigoplus_{i = 1}^m A\{{\rm d}x\} \oplus \bigoplus_{i = 1}^n A\{{\rm dlog}(y)\},\] and the resulting map $\Omega^1_{A/R} \to \Omega^1_{(A, M) / (R, P)}$ sends generators of the form ${\rm d}y$ to $y{\rm dlog}(y)$.
\item If $f^\flat$ is strict, then the canonical map ${\Bbb L}_{A / R} \to {\Bbb L}_{(A, M) / (R, P)}$ is an equivalence.
\item\label{eq:base change} If $(C, K)$ arises as the homotopy pushout of \[(B, N) \xleftarrow{(g, g^\flat)} (R, P) \xrightarrow{(f, f^\flat)} (A, M),\] then the canonical map $C \otimes_B {\Bbb L}_{(B, N) / (R, P)} \to {\Bbb L}_{(C, K) / (A, M)}$ is an equivalence. 
\item \label{eq:logcotangent_vsnonlog} There is a canonical equivalence $\L_{(\Z[M],M)/ (\Z[P],P)} \cong \Z[M]\otimes_\Z (M^{\gp}/P^{\gp})$, and a cofiber sequence of $A$-modules
\[
A \otimes_\Z (M^{\gp}/P^{\gp}) \to \L_{(A,M)/(R,P)} \to \L_{A/ R\otimes_{\Z[P]} \Z[M]}
\]
\item\label{eq:transitivity} For a composite $(R, P) \xrightarrow{} (A, M) \xrightarrow{} (B, N)$, there is a cofiber sequence \[B \otimes_A {\Bbb L}_{(A, M) / (R, P)} \to {\Bbb L}_{(B, N) / (R, P)} \to {\Bbb L}_{(B, N) / (A, M)}\] of $B$-modules.  
\item If $(R, P) \to (A, M)$ is an \emph{integral} log smooth morphism, then the truncation map 
\[{\Bbb L}_{(A, M) / (R, P)} \to \Omega^1_{(A, M) / (R, P)}\] is an equivalence. See \cite[Proposition 4.6]{BLPO} for a precise relationship between  the notion of log smoothness in classical log geometry and its derived counterpart. 
\end{enumerate}

\subsection{Homologically log flat descent for the cotangent complex} By \cite[Theorem 3.1]{BMS19}, the wedge powers of the (non-logarithmic) cotangent complex satisfy flat descent. Using the HKR theorem, this is   the key input in proving flat descent for topological Hochschild homology. 
For the analogous statement in the log setting, we use the notion of homologically log faithfully flat of Definition \ref{def:logff}:

\begin{theorem}\label{thm:gabberdescent} Consider a base pre-log ring $(R, P)$, and let $(A, M) \to (B, N)$ be a homologically log faithfully flat map of $(R, P)$-algebras. Then \[\wedge^i {\Bbb L}_{(A, M)/(R, P)} \simeq {\rm lim}(\wedge^i {\Bbb L}_{(B, N) / (R, P)} \rightrightarrows \wedge^i {\Bbb L}_{(B \otimes_A B, N \oplus_M N) / (R, P)} \cdots).\] 
\end{theorem}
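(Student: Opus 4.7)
The strategy is to mirror the proof of \cite[Theorem 3.1]{BMS19} for the non-logarithmic cotangent complex, reducing the logarithmic statement to the non-logarithmic one via the cofiber sequence of property \eqref{eq:logcotangent_vsnonlog}. First I would reduce to the case $i = 1$. Wedge powers of a cofiber sequence of $A$-modules $L' \to L \to L''$ admit a functorial finite filtration (the Koszul-type filtration) whose graded pieces are $\wedge^a L' \otimes_A \wedge^b L''$ for $a + b = i$. Applying \eqref{eq:logcotangent_vsnonlog} levelwise along the \v{C}ech nerve of $(A, M) \to (B, N)$, and using that $\lim_\Delta$ commutes with finite fiber sequences, the problem reduces to establishing descent for the wedge powers of each of the two pieces,
\[
A \otimes_{\Z}(M^{\gp}/P^{\gp}) \qquad\text{and}\qquad \L_{A/R_M}, \qquad\text{where } R_M := R \otimes_{\Z[P]} \Z[M].
\]

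For the non-logarithmic piece, at stage $k$ of the \v{C}ech nerve one encounters $\L_{B_k/R'_k}$, where $B_k$ is the $(k{+}1)$-fold tensor power of $B$ over $A$ and $R'_k$ is the base ring $R$ tensored over $\Z[P]$ with the monoid ring of the corresponding iterated monoid pushout. The base ring varies with $k$. Using transitivity \eqref{eq:transitivity} together with base change \eqref{eq:base change}, I would compare $\L_{B_k/R'_k}$ with $\L_{B_k/R_M}$ over the \emph{fixed} base $R_M$; the wedge powers of the latter satisfy descent along the faithfully flat ring map $A \to B$ by \cite[Theorem 3.1]{BMS19}. The discrepancy between these two cotangent complexes is itself of ``monoid type'' --- encoded by \eqref{eq:base change} applied to $R'_k \to B_k$ --- so it can be absorbed into the treatment of the first piece.

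For the monoid piece, flatness of $M \to N$ in the sense of Definition \ref{def:logff} ensures that the iterated monoid pushouts remain discrete. A direct verification then identifies the cosimplicial abelian group obtained by group-completing and dividing by $P^{\gp}$ with the \v{C}ech nerve of $V := M^{\gp}/P^{\gp} \to W := N^{\gp}/P^{\gp}$ in animated abelian groups. Descent for $\wedge^a_A(A \otimes_\Z V) \simeq A \otimes_\Z \wedge^a_\Z V$ along the \v{C}ech nerve of pre-log rings then follows by combining faithfully flat descent of the $A$-module $A \otimes_\Z \wedge^a_\Z V$ along $A \to B$ with the compatibility of the two cosimplicial structures.

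The principal obstacle will be correctly aligning the two simultaneously varying structures --- the base ring on the non-logarithmic side and the iterated monoid pushout on the monoid side --- and verifying that the correction terms produced by transitivity remain within the class of objects for which descent has already been established. Reassuringly, the correction terms are themselves cotangent complexes arising from strict maps of pre-log rings, which via \eqref{eq:logcotangent_vsnonlog} again reduce to the monoid piece; the two reductions therefore close up on one another, and the only genuinely non-trivial external input is \cite[Theorem 3.1]{BMS19}.
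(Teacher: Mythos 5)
Your proposal takes a genuinely different route from the paper's, and I believe it has a real gap at the step where the two halves are supposed to ``close up on one another.''

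The paper does \emph{not} split $\L_{(-,-)/(R,P)}$ along the cofiber sequence of \eqref{eq:logcotangent_vsnonlog}. Instead it applies the transitivity sequence \eqref{eq:transitivity} for $(R,P)\to(A,M)\to(B^\bullet,N^\bullet)$, which cleanly shows that the statement is equivalent to the vanishing of $\lim_\Delta \L_{(B^\bullet,N^\bullet)/(A,M)}$. That vanishing is proved by tensoring up to $B$ (legitimated by faithful flatness of $A\to B$), invoking base change \eqref{eq:base change} to identify $B\otimes_A\L_{(B^\bullet,N^\bullet)/(A,M)}$ with $\L_{(B\otimes_A B^\bullet,\, N\oplus_M N^\bullet)/(B,N)}$, and then observing that $(B,N)\to(B\otimes_A B^\bullet,\,N\oplus_M N^\bullet)$ is the \v{C}ech nerve of a map with a section (the multiplication on the ring side, the fold map on the monoid side), hence cosimplicially contractible. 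This single contracting homotopy handles the ring side and the monoid side simultaneously; no separate descent statement for each factor of \eqref{eq:logcotangent_vsnonlog} is ever needed.

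Two specific places in your argument need more than you supply. First, for the monoid piece: what has to be shown is that $\lim_k B_k\otimes_\Z(N_k^{\gp}/P^{\gp})\simeq A\otimes_\Z(M^{\gp}/P^{\gp})$, and faithfully flat descent of the $A$-module $A\otimes_\Z V$ along $A\to B$ only gives $\lim_k B_k\otimes_\Z V\simeq A\otimes_\Z V$ for the \emph{constant} coefficient $V$. The genuine content is that $\lim_k B_k\otimes_\Z(V_k/V)\simeq 0$, where $V_k/V\cong (N^{\gp}/M^{\gp})^{\oplus(k+1)}$; appealing to ``compatibility of the two cosimplicial structures'' is not a proof, and the vanishing really does need the section trick after passing to $B$. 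Second, and more seriously, the ``correction terms'' you produce when moving between $\L_{B_k/R'_k}$ and $\L_{B_k/R_M}$ are, via transitivity and \eqref{eq:base change}, of the form $B_k\otimes_{\Z[M]}\L_{\Z[N_k]/\Z[M]}$. These are ordinary (non-log) cotangent complexes of the monoid-ring inclusion $\Z[M]\to\Z[N_k]$. That map is \emph{not} strict when one equips the monoid rings with their tautological pre-log structures, so $\L_{\Z[N_k]/\Z[M]}$ does not identify with $\Z[N_k]\otimes_\Z(N_k^{\gp}/M^{\gp})$, and your reduction of the correction terms back to ``the monoid piece'' via \eqref{eq:logcotangent_vsnonlog} is not justified. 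As written the argument does not close up; the paper's route through \eqref{eq:transitivity} and the section trick avoids both difficulties entirely.
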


\begin{proof} We follow the steps of \cite[Theorem 3.1]{BMS19}, and proceed by induction on $i$, where the case   $i = 0$ is just faithfully flat descent for rings. The argument for $i = 1$ will, as spelled out below, be as in \cite[Theorem 3.1]{BMS19}, and this implies the result for higher $i$ by the filtrations considered in \cite[Section V.4]{Ill71}.  

Let $(B^{\bullet}, N^{\bullet})$ denote the Cech nerve of the map $(A, M) \to (B, N)$. We can apply the transitivity sequence of \ref{subsec:cotcx}.\eqref{eq:transitivity} to the composite $(R, P) \to (A, M) \to (B^{\bullet}, N^{\bullet})$ to obtain a cosimplicial cofiber sequence \[B^{\bullet} \otimes_A {\Bbb L}_{(A, M)/(R, P)} \to {\Bbb L}_{(B^{\bullet}, N^{\bullet}) / (R, P)} \to {\Bbb L}_{(B^{\bullet}, N^{\bullet}) / (A, M)}.\] Since $A \to B$ is faithfully flat, fpqc descent implies that the induced map 
\[{\Bbb L}_{(A, M)/(R, P)} \to {\rm lim}(B^{\bullet} \otimes_A {\Bbb L}_{(A, M)/(R, P)})\] is an isomorphism. Again by the transitivity sequence, it  suffices to prove that the totalization of ${\Bbb L}_{(B^{\bullet}, N^{\bullet}) / (A, M)}$ vanishes. For this, we show that each $\pi_i {\Bbb L}_{(B^{\bullet}, N^{\bullet}) / (A, M)}$ vanishes (the cosimplicial direction of $(B^{\bullet}, N^{\bullet})$ is still running). Since $A \to B$ is faithfully flat, it suffices to prove that (the right-hand side of) \begin{equation}\label{descreduction1}B \otimes_A \pi_i {\Bbb L}_{(B^{\bullet}, N^{\bullet}) / (A, M)} \simeq \pi_i (B \otimes_A {\Bbb L}_{(B^{\bullet}, N^{\bullet}) / (A, M)})\end{equation} vanishes. Descent for $B$ together with the base change property \ref{subsec:cotcx}.\eqref{eq:base change} of the Gabber cotangent complex (using again that $A \to B$ is faithfully flat) implies that \[B \otimes_A {\Bbb L}_{(B^{\bullet}, N^{\bullet}) / (A, M)} \xrightarrow{\simeq} (B \otimes_A B^{\bullet}) \otimes_{B^{\bullet}} {\Bbb L}_{(B^{\bullet}, N^{\bullet}) / (A, M)} \xrightarrow{\simeq} {\Bbb L}_{(B \otimes_A B^{\bullet}, N \oplus_M N^{\bullet}) / (B, N)}.\] But now $(B, N) \to (B \otimes_A B^{\bullet}, N \oplus_M N^{\bullet})$ is the Cech nerve of a map with a section, hence a cosimplicial homotopy equivalence. This shows that the right-hand side of \eqref{descreduction1} is equivalent to $\pi_i {\Bbb L}_{(B, N)/(B, N)} \simeq 0$, which concludes the proof. 
\end{proof}

\subsection{Repletion}\label{subsec:repletion} The key ingredient to the definition of log Hochschild homology pursued by Rognes is the notion of \emph{repletion}, or \emph{exactification}, of a map of monoids. This operation was introduced by Kato in \cite{Kat89} in the geometric context and was revisited and generalized to the context of topological log structures by Rognes \cite{Rog09}. We shall use Rognes' terminology throughout.   

\begin{definition} Let $(C, K) \to (A, M)$ be a map of pre-log rings with the property that $K^{\rm gp} \to M^{\rm gp}$ is surjective.
\begin{enumerate}
\item The \emph{repletion} $K^{\rm rep} \to M$ is the pullback of $K^{\rm gp} \to M^{\rm gp}$ along $M \to M^{\rm gp}$. 
\item The \emph{replete base change} $C^{\rm rep}$ is the base change $C \otimes_{{\Bbb Z}[K]} {\Bbb Z}[K^{\rm rep}]$. 
\end{enumerate}
\end{definition}

Note that, in general, $C \otimes_{{\Bbb Z}[K]} {\Bbb Z}[K^{\rm rep}]$ may not be discrete. By definition, the resulting map $K^{\rm rep} \to M$ is exact. Since the monoids involved are integral, it is strict by 
\cite[Proposition 4.2.1(5)]{Ogu18}. 
\begin{remark}
       Note that the exactification has a very concrete geometric interpretation in terms of affine blow-ups of the corresponding toric variety. For closed embeddings, the exactification procedure is the passage to a blow-up,  after which the morphism becomes strict. 
     We view the passage from $(C, K)$ to $(C^{\rm rep}, K^{\rm rep})$ as a means of ``cashing out'' the pre-log structure on $C$ (relative to that on $M$), allowing us to work with the resulting commutative ring $C^{\rm rep}$ while still retaining information from the log structure. If $M$ is trivial, this corresponds precisely to passing to the open complement of the log structure. The following example supports this philosophy and  motivates the definition of log Hochschild homology pursued in \cite{BLPO}.
    \end{remark}

\begin{example}\label{ex:derivedlogdiagonal} Let $(R, P) \to (A, M)$ be a map of pre-log rings. There is a factorization \begin{equation}\label{diagonal}\pi_0(A \otimes_R A, M \oplus_P M) \to \pi_0((A \otimes_R A)^{\rm rep}, (M \oplus_P M)^{\rm rep}) \to (A, M)\end{equation} of the (underived) diagonal $\pi_0(A \otimes_R A, M \oplus_P M) \to (A, M)$. The indecomposables (i.e., conormal) of the underlying map $\pi_0(A \otimes_R A)^{\rm rep} \to A$ is the module $\Omega^1_{(A, M) / (R, P)}$ of relative log differentials \cite[Proposition 4.2.8(ii)]{KS04}. In the derived setting, we instead obtain the Gabber cotangent complex ${\Bbb L}_{(A, M) / (R, P)}$ as the (derived) conormal of the log diagonal \[(A \otimes_R A, M \oplus_P M) \to ((A \otimes_R A)^{\rm rep}, (M \oplus_P M)^{\rm rep}) \to (A, M);\] this is the content of \cite[Proposition 3.17]{BLPO}. Note that, geometrically, the diagonal embedding $X = \mathrm{Spec}(A) \hookrightarrow \mathrm{Spec}(A\otimes_R A) = X\times_S X$ is not an exact (equivalently, strict) closed immersion, where $X$ is seen as a log scheme with log structure induced by $M$, and $X\times_S X$ is a log scheme with log structure induced by $\pi_0(M\oplus_P M)$. One achieves strictness precisely by passing to the replete base change that models an affine chart for a suitable blow-up. See \cite[2.16]{HK}. \end{example}

\subsection{Replete base change for the Gabber cotangent complex} A key ingredient for the proof of Theorem \ref{thm:logthhdefhh} is the following base change property for the log cotangent complex. From a log geometric perspective, this  expresses the fact that the morphism $(A, M) \to (A^{\rm rep}, M^{\rm rep})$ is (formally derived) log \'etale. 

\begin{lemma}\label{lem:repletebasechange} Let $(R, P) \to (A, M)$ be a map of pre-log rings. Assume that there is a map $(A, M) \to (S, Q)$ with the property that the induced map $M^{\rm gp} \to Q^{\rm gp}$ is a surjection, and let $(A^{\rm rep}, M^{\rm rep}) \to (S, Q)$ denote the repletion of $(A, M) \to (S, Q)$.

Then the canonical map \[A^{\rm rep} \otimes_A {\Bbb L}_{(A, M)/(R, P)} \xrightarrow{} {\Bbb L}_{(A^{\rm rep}, M^{\rm rep}) / (R, P)}\] is an equivalence. 
\end{lemma}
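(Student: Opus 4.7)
The plan is to invoke the transitivity sequence of the Gabber cotangent complex (property (5) of Section \ref{subsec:cotcx}) for the composition $(R,P) \to (A,M) \to (A^{\rm rep}, M^{\rm rep})$. This produces a cofiber sequence
\[
A^{\rm rep} \otimes_A \mathbb{L}_{(A,M)/(R,P)} \to \mathbb{L}_{(A^{\rm rep}, M^{\rm rep})/(R,P)} \to \mathbb{L}_{(A^{\rm rep}, M^{\rm rep})/(A,M)},
\]
whose first map is precisely the canonical map of the statement. Hence the lemma is equivalent to showing that the relative Gabber cotangent complex $\mathbb{L}_{(A^{\rm rep}, M^{\rm rep})/(A,M)}$ vanishes, i.e.\ that $(A,M) \to (A^{\rm rep}, M^{\rm rep})$ is derived formally log \'etale.

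For this, I would apply the cofiber sequence from property (4) in Section \ref{subsec:cotcx}, which gives
\[
A^{\rm rep} \otimes_{\mathbb{Z}} \bigl((M^{\rm rep})^{\rm gp}/M^{\rm gp}\bigr) \to \mathbb{L}_{(A^{\rm rep}, M^{\rm rep})/(A,M)} \to \mathbb{L}_{A^{\rm rep}/(A \otimes_{\mathbb{Z}[M]} \mathbb{Z}[M^{\rm rep}])}.
\]
The right-hand term vanishes by definition of replete base change (Section \ref{subsec:repletion}): $A^{\rm rep}$ is by definition the ring $A \otimes_{\mathbb{Z}[M]} \mathbb{Z}[M^{\rm rep}]$, so this is the cotangent complex of a ring over itself. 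The problem then collapses to the purely monoid-theoretic claim that the first projection $M^{\rm rep} = M^{\rm gp} \times_{Q^{\rm gp}} Q \to M^{\rm gp}$ induces an isomorphism on group completions.

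This last step is where I expect the main (though essentially formal) work to lie, and it is a standard feature of repletion for integral monoids. For surjectivity of $(M^{\rm rep})^{\rm gp} \to M^{\rm gp}$, one writes an arbitrary $g \in M^{\rm gp}$ as a difference $g = g_1 - g_2$ with $\bar{f}(g_i) = q_i^{\rm gp}$ for chosen $q_1, q_2 \in Q$; this uses that $Q$ generates $Q^{\rm gp}$ (to write $\bar{f}(g) = q_1^{\rm gp} - q_2^{\rm gp}$) together with surjectivity of $M^{\rm gp} \twoheadrightarrow Q^{\rm gp}$ (to lift the $q_i$). Then $g$ is the first-projection image of $[(g_1,q_1)] - [(g_2,q_2)] \in (M^{\rm rep})^{\rm gp}$. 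Injectivity follows, under the standing integrality hypotheses on $Q$, by noting that two elements of $M^{\rm rep}$ sharing a first coordinate must have second coordinates with equal image in $Q^{\rm gp}$, hence be equal in $Q$ via the embedding $Q \hookrightarrow Q^{\rm gp}$.
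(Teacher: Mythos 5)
Your proposal is correct, and it is cleaner than the argument in the paper, though the two proofs use the same core inputs. You reduce via the transitivity cofiber sequence \ref{subsec:cotcx}.\eqref{eq:transitivity} to the vanishing of $\L_{(A^{\rm rep}, M^{\rm rep})/(A, M)}$---precisely the statement that $(A, M) \to (A^{\rm rep}, M^{\rm rep})$ is formally derived log \'etale, which the paper records as a slogan just before the lemma but does not directly prove---and then apply the cofiber sequence of \ref{subsec:cotcx}.\eqref{eq:logcotangent_vsnonlog} to this relative map, observing that both outer terms vanish: the right one by the definition of the (animated) replete base change, the left one because $M^{\rm gp} \to (M^{\rm rep})^{\rm gp}$ is an isomorphism. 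The paper, following Rognes, instead base-changes the pushout square describing $\L_{(A,M)/(R,P)}$ along $A \to A^{\rm rep}$, writes the analogous square for $\L_{(A^{\rm rep},M^{\rm rep})/(R,P)}$, and assembles a $3\times 3$ diagram whose columns are transitivity cofiber sequences, deducing the equivalence by inspecting which maps become equivalences; this uses the same two facts (base change for the non-log cotangent complex and $M^{\rm gp}\simeq (M^{\rm rep})^{\rm gp}$) but distributes them more diffusely across a diagram chase. One small remark on your final monoid-theoretic step: you need not verify surjectivity of $(M^{\rm rep})^{\rm gp}\to M^{\rm gp}$ by a lifting argument invoking surjectivity of $M^{\rm gp}\twoheadrightarrow Q^{\rm gp}$, since that map admits the canonical $M^{\rm gp}\to(M^{\rm rep})^{\rm gp}$ as a section and is thus automatically split surjective; the only substantive check is injectivity, which is exactly where integrality of $Q$ enters, as you say.
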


\begin{proof} We follow a proof strategy in \cite[Proposition 11.30]{Rog09} in the context of log ${\rm TAQ}$. 
Recall that, using \ref{subsec:cotcx}.\eqref{eq:logcotangent_vsnonlog}, we have pushout squares of $A$-modules
\begin{equation}\label{eq:repletebc_eq}
\begin{tikzcd}[column sep = small]
A\otimes_{\Z[M]} \L_{\Z[M]/\Z[P]} \arrow[r] \arrow[d] & A \otimes_\Z (M^\gp /P^\gp)\arrow[d] \\
\L_{A/R} \arrow[r] & \L_{(A,M)/(R,P)}
\end{tikzcd}\quad 
\begin{tikzcd}[column sep = small]
A^{\rm rep}\otimes_{\Z[M^{\rm rep}]} \L_{\Z[M^{\rm rep}]/\Z[P]} \arrow[r] \arrow[d] & A \otimes_\Z ({(M^{\rm rep})}^\gp /P^\gp)\arrow[d] \\
\L_{A^{\rm rep}/R} \arrow[r] & \L_{(A^{\rm rep},M^{\rm rep})/(R,P)},
\end{tikzcd}  
\end{equation}
see \cite[(3.3)]{BLPO}, and a canonical map from the base change to $A^{\rm rep}$ of the left square of \eqref{eq:repletebc_eq} to the right square. The transitivity sequence \ref{subsec:cotcx}.\eqref{eq:transitivity} yields then a commutative diagram
\[\begin{tikzcd}A^{\rm rep} \otimes_A {\Bbb L}_{A/R} \ar{d} & A^{\rm rep} \otimes_{{\Bbb Z}[M]} {\Bbb L}_{{\Bbb Z}[M]/{\Bbb Z}[P]} \ar{r} \ar{l} \ar{d} & A^{\rm rep} \otimes_{\Bbb Z} (M^{\rm gp}/P^{\rm gp}) \ar{d}{\simeq} \\ {\Bbb L}_{A^{\rm rep} / R} \ar{d} & A^{\rm rep} \otimes_{{\Bbb Z}[M^{\rm rep}]} {\Bbb L}_{{\Bbb Z}[M^{\rm rep}] / {\Bbb Z}[P]} \ar{l} \ar{r} \ar{d} & A^{\rm rep} \otimes_{\Bbb Z} ((M^{\rm rep})^{\rm gp}/P^{\rm gp}) \ar{d} \\ {\Bbb L}_{A^{\rm rep}/A} & A^{\rm rep} \otimes_{{\Bbb Z}[M^{\rm rep}]} {\Bbb L}_{{\Bbb Z}[M^{\rm rep}] / {\Bbb Z}[M]} \ar[swap]{l}{\simeq} \ar{r} & *\end{tikzcd}\] 
of $A^{\rm rep}$-modules, where the vertical columns are cofiber sequences. 

Base change for the cotangent complex implies that the map $A^{\rm rep} \otimes_{{\Bbb Z}[M^{\rm rep}]} {\Bbb L}_{{\Bbb Z}[M^{\rm rep}] / {\Bbb Z}[M]} \to {\Bbb L}_{A^{\rm rep} / A}$ is an equivalence so that the upper left-hand square is a pushout. Since $M^{\rm gp} \to (M^{\rm rep})^{\rm gp}$ is an equivalence, the same holds for the map $A^{\rm rep} \otimes (M^{\rm gp}/P^{\rm gp}) \to A^{\rm rep} \otimes ((M^{\rm rep})^{\rm gp}/P^{\rm gp})$.   Thus the canonical map from the pushout of the upper horizontal row to the pushout of the middle horizontal row is an equivalence, and the result follows.
\end{proof}

\subsection{Logarithmic Hochschild homology}  Let $R \to A$ be a map of commutative rings. The \emph{Hochschild homology} ${\rm HH}(A/R)$ of $A$ relative to $R$ can be defined as the tensor $S^1 \otimes_R A$ in simplicial commutative $R$-algebras. Writing $S^1$ as the homotopy pushout of $* \xleftarrow{} * \sqcup * \xrightarrow{} *$, we find that ${\rm HH}(A/R) = A \otimes_{A \otimes_R A} A$. 

Motivated by Example \ref{ex:derivedlogdiagonal}, we gave in \cite[Definition 5.3]{BLPO} the following:

\begin{definition}\label{def:loghh} Let $(R, P) \to (A, M)$ be a map of pre-log rings. The \emph{logarithmic Hochschild homology} of $(A, M)$ relative to $(R, P)$ is the pushout \[{\rm HH}((A, M)/(R, P)) := A \otimes_{(A \otimes_R A)^{\rm rep}} A\] of simplicial commutative $A$-algebras. 
\end{definition}

\begin{remark} In algebro-geometric terms, the description of Hochschild homology as the iterated coproduct $A \otimes_{A \otimes_R A} A$ exhibits ${\rm HH}(A/R)$ as the (derived) self-intersections of the diagonal $A \otimes_R A \to A$. In the language of Kato--Saito \cite[Section 4]{KS04}, Definition \ref{def:loghh} exhibits ${\rm HH}((A, M)/(R, P))$ as the (derived) self-intersections of the (derived) log diagonal $(A \otimes_R A)^{\rm rep} \to A$, see Example \ref{ex:derivedlogdiagonal}. Note that in \cite{BLPO}, we used the  notation $\log\HH$ instead of $\HH$ to denote logarithmic Hochschild homology.
\end{remark}

\begin{remark}\label{rmk:properties_logHH}We record some key properties of log Hochschild homology here:

\begin{enumerate}

\item In analogy with the transitivity formula ${\rm HH}(B / A) \simeq A \otimes_{{\rm HH}(A / R)} {\rm HH}(B / R)$ for a composite $R \to A \to B$, there is an equivalence \[{\rm HH}((B, N) / (A, M)) \simeq A \otimes_{{\rm HH}((A, M) / (R, P))} {\rm HH}((B, N) / (R, P))\] for a composite $(R, P) \to (A, M) \to (B, N)$ of pre-log rings. This is the content of \cite[Proposition 5.4]{BLPO}.
\item\label{eq:logHH_as_Rognes} The formulation of Definition \ref{def:loghh} coincides with that pursued by Rognes \cite{Rog09}. In particular, for a map of commutative monoids $P \to M$, we can form the simplicial tensor $S^1 \oplus_P M$, and the repletion $S^1 \oplus_P^{\rm rep} M$ of the augmentation $S^1 \oplus_P M \to M$. Then ${\rm HH}((A, M) / (R, P))$ can equivalently be described as the pushout of the diagram \begin{equation}\label{rognespushout}{\rm HH}(A/R) \xleftarrow{} A \otimes_{{\Bbb Z}[M]} {\Bbb Z}[S^1 \oplus_P M] \xrightarrow{} A \otimes_{{\Bbb Z}[M]} {\Bbb Z}[S^1 \oplus_P^{\rm rep} M]\end{equation} of derived $A$-algebras; this is the content of \cite[Proposition 1.4]{BLPO}. 
\item Log Hochschild homology is invariant under passing to the associated log ring:
There are equivalences \[{\rm HH}((A, M) / (R, P)) \xrightarrow{\simeq} {\rm HH}((A, M^a) / (R, P)) \xrightarrow{\simeq} {\rm HH}((A, M^a) / (R, P^a));\] this can be deduced from Theorem \ref{thm:loghkrfilt} below and the analogous statement for the Gabber cotangent complex \cite[Theorem 8.16]{Ols05}. In particular, this implies that log Hochschild recovers classical Hochschild homology for strict morphisms, as in this case, the canonical map \[{\rm HH}(A/R) \xrightarrow{\simeq} {\rm HH}((A, P) / (R, P)) \xrightarrow{\simeq} {\rm HH}((A, M) / (R, P))\] is an equivalence by logification invariance. This implies that there is an equivalence ${\rm HH}((A, M) / (R, P)) \xrightarrow{\simeq} {\rm HH}((A, N) / (R, P))$ for strict maps $M \to N$.  
\end{enumerate}
\end{remark}

\subsection{The logarithmic ${\rm HKR}$-filtration} Recall  that Hochschild homology ${\rm HH}(A/R)$ admits a descending separated filtration with graded pieces $(\wedge_A^i {\Bbb L}_{A / R})[i]$ (see e.g., \cite[Proposition IV.4.1]{NS18}). The following is the log analog of that result:

\begin{theorem}\label{thm:loghkrfilt}\cite[Theorem 1.1]{BLPO} Let $(R, P) \to (A, M)$ be a map of pre-log rings. Then: \begin{enumerate}
    \item  Log Hochschild homology ${\rm HH}((A, M) / (R, P))$ admits a separated, descending filtration with graded pieces $(\wedge_A^i {\Bbb L}_{(A, M) / (R, P)})[i]$.  
\item
If the morphism $(R, P) \to (A, M)$ is \emph{derived} log smooth,   the canonical map \begin{equation}\label{eq:HKR_lsm}
\Phi_* \colon \Omega^*_{(A, M) / (R, P)} \to \pi_*{\rm HH}((A, M) / (R, P))
\end{equation}
is an isomorphism of strictly commutative graded rings. 
\end{enumerate}
\end{theorem}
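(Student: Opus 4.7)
My plan is to reduce to polynomial pre-log $(R,P)$-algebras by left Kan extension and then combine two HKR-style filtrations: one for the ``ring part'' of log Hochschild homology and one for the ``monoidal part''. I would start from the pushout presentation
\[
{\rm HH}((A, M) / (R, P)) \simeq {\rm HH}(A/R) \otimes_{A \otimes_{{\Bbb Z}[M]} {\Bbb Z}[S^1 \oplus_P M]} \big( A \otimes_{{\Bbb Z}[M]} {\Bbb Z}[S^1 \oplus_P^{\rm rep} M] \big)
\]
of Remark \ref{rmk:properties_logHH}.\eqref{eq:logHH_as_Rognes}. Each of the three objects is a derived tensor with a simplicial commutative ring arising from the cyclic bar construction on a monoid (or from $S^1$ itself), and so carries a natural skeletal/Postnikov filtration whose graded pieces can be computed explicitly. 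On the ${\rm HH}(A/R)$ factor this recovers the classical HKR filtration with graded pieces $(\wedge^i_A \mathbb{L}_{A/R})[i]$; on the monoidal factors, a parallel simplicial analysis produces graded pieces involving wedge powers of $A \otimes_{\Bbb Z}(M^{\gp}/P^{\gp})$.

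I would then pass to associated graded and identify the result with wedge powers of the Gabber cotangent complex. For this I would use the cofiber sequence
\[
A \otimes_{\Bbb Z} (M^{\gp}/P^{\gp}) \to {\Bbb L}_{(A,M)/(R,P)} \to {\Bbb L}_{A/ R\otimes_{{\Bbb Z}[P]} {\Bbb Z}[M]}
\]
from \ref{subsec:cotcx}.\eqref{eq:logcotangent_vsnonlog}, together with the standard filtration on wedge powers of an extension of modules. A cleaner alternative is to invoke Example \ref{ex:derivedlogdiagonal} directly: since $\mathbb{L}_{(A,M)/(R,P)}$ is the derived conormal of the exact log diagonal $(A\otimes_R A)^{\rm rep} \to A$ and ${\rm HH}((A,M)/(R,P)) = A \otimes_{(A\otimes_R A)^{\rm rep}} A$, the derived-tensor-over-conormal computation underlying the classical HKR filtration applies formally in the log setting. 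Since every ingredient commutes with sifted colimits in $(A,M)$, left Kan extension from polynomial pre-log algebras then delivers the filtration in general.

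Part (2) follows by specialising to the derived log smooth case, where $\mathbb{L}_{(A,M)/(R,P)}$ is concentrated in degree zero and equivalent to $\Omega^1_{(A,M)/(R,P)}$; the filtration degenerates at the first page, and the shuffle product on the cyclic bar construction endows the resulting isomorphism with its strictly commutative graded ring structure. The main obstacle I anticipate is controlling the repletion homotopically: being a pullback rather than a pushout of monoids, repletion does not commute with arbitrary colimits, so the standard HKR argument does not transfer naively. The key input resolving this is Lemma \ref{lem:repletebasechange}, which encodes the formal log étaleness of $(A, M) \to (A^{\rm rep}, M^{\rm rep})$; it ensures that the skeletal filtrations on the three corners of the pushout remain compatible after repletion, and that the associated-graded identification with wedge powers of $\mathbb{L}_{(A,M)/(R,P)}$ is undisturbed.
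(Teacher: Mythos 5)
The paper does not give a proof of this statement: Theorem~\ref{thm:loghkrfilt} is quoted directly from \cite[Theorem~1.1]{BLPO}, so there is no internal argument to compare your sketch against.

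Read on its own terms, your first route---decompose via the pushout of Remark~\ref{rmk:properties_logHH}(\ref{eq:logHH_as_Rognes}), filter the ${\rm HH}(A/R)$ corner by the classical HKR filtration and the monoidal corners by the word-length/skeletal filtration of the (replete) cyclic bar construction, then identify the associated graded via the square of~\ref{subsec:cotcx}(\ref{eq:logcotangent_vsnonlog}) and left Kan extend from polynomial pre-log algebras---is a plausible reconstruction. Two points, however, need repair. First, your ``cleaner alternative'' overclaims: knowing that $\mathbb{L}_{(A,M)/(R,P)}$ is the shifted conormal of $(A\otimes_R A)^{\rm rep}\to A$ does not by itself hand you a complete filtration on $A\otimes_{(A\otimes_R A)^{\rm rep}} A$ with the stated graded pieces. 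In the classical case the HKR filtration comes from the cell structure on $S^1$, equivalently the word-length filtration on the cyclic bar complex, not from a bare conormal identification; in the log case one needs the replete bar construction to supply the corresponding simplicial object. That is precisely your first route, not a shortcut around it. Second, Lemma~\ref{lem:repletebasechange} is not the lever that tames the repletion you are worried about: that lemma computes base change of the Gabber cotangent complex along $(A,M)\rightsquigarrow (A^{\rm rep},M^{\rm rep})$ and is used later for Proposition~\ref{prop:loghhrepletebasechange}, not in the construction of the HKR filtration. Your instinct that ``repletion is a pullback and so does not commute with the colimits entering a left Kan extension'' is the right thing to be nervous about, but it is resolved by noting that the replete bar construction is already a simplicial commutative monoid with repletion applied levelwise---so its skeletal filtration is defined outright---and that $\mathrm{HH}((-,-)/(R,P))$ and $\wedge^i\mathbb{L}_{(-,-)/(R,P)}$ both commute with sifted colimits by construction as left-derived functors.
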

Note that \eqref{eq:HKR_lsm} is always an isomorphism in degrees $0$ and $1$ by \cite[Proposition 5.15]{BLPO}. 
We shall refer to the above filtration as the \emph{log ${\rm HKR}$-filtration}. 

\subsection{The circle action on logarithmic Hochschild homology} Let $P \to M$ be a map of commutative monoids. The linearization of the replete bar construction \[{\rm HH}(({\Bbb Z}[M], M) / ({\Bbb Z}[P], P)) \simeq {\Bbb Z}[M \times_{M^{\rm gp}} B^{\rm cy}_{P^{\rm gp}}(M^{\rm gp})_{\bullet}]\] is a simplicial commutative ring. Rognes' formulation (the right-hand side of the displayed equivalence) is more convenient 
for discussing the circle action on logarithmic Hochschild homology. 

The operator \[t_q(m, (g_0, g_1, \cdots, g_q)) = (-1)^q(m, (g_q, g_0, \cdots  ,g_{q - 1}))\] satisfies the formulas of \cite[Definition 2.5.1]{Lod92}. In particular, the replete bar construction has the structure of a mixed complex, and there is a degree-increasing differential \[B \colon \pi_q\HH(({\Bbb Z}[M], M) / ({\Bbb Z}[P], P)) \to \pi_{q + 1}\HH(({\Bbb Z}[M], M) / ({\Bbb Z}[P], P)).\] Since the map ${\rm HH}({\Bbb Z}[M] / {\Bbb Z}[P]) \to {\rm HH}(({\Bbb Z}[M], M) / ({\Bbb Z}[P], P))$ is compatible with the cyclic operators, we obtain a degree-increasing differential \begin{equation}\label{logcomparison}B \colon \pi_*{\rm HH}((A, M) / (R, P)) \to \pi_{* + 1}((A, M) / (R, P))\end{equation} on log Hochschild homology $\pi_*{\rm HH}((A, M) / (R, P))$.

Log Hochschild homology is a cyclic $R$-module. We define \emph{log cyclic, negative and periodic} homology as the homotopy orbits, homotopy fixed points, and the Tate construction of the resulting $S^1$-action: \begin{align*}{\rm HC}((A, M) / (R, P)) := {\rm HH}((A, M) / (R, P))_{hS^1}, \\ {\rm HC}^-((A, M) / (R, P)) := {\rm HH}((A, M) / (R, P))^{hS^1}, \\ {\rm HP}((A, M) / (R, P)) := {\rm HH}((A, M) / (R, P))^{tS^1},\end{align*} cf.\ \cite[Theorem 2.1]{Hoy18} for comparison with classical definitions of these variants of Hochschild homology.

The maps \eqref{logcomparison} are compatible with the log de Rham differential:

\begin{proposition}\label{prop:logderham} The diagram \[\begin{tikzcd}\Omega^q_{(A, M)/(R, P)} \ar{r}{{\rm d}} \ar{d}{\Phi_q} &  \Omega^{q + 1}_{(A, M)/(R, P)} \ar{d}{\Phi_{q + 1}} \\ \pi_q\HH((A, M) / (R, P))  \ar{r}{B} & \pi_{q + 1}\HH((A, M) / (R, P))\end{tikzcd}\] is commutative. 
\end{proposition}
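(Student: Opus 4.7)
The strategy is to use that both sides of the square are graded derivations squaring to zero. The Connes operator $B$ is a graded derivation of degree $+1$ on $\pi_*\HH$ with $B^2=0$ (a general fact for cyclic objects), and the log de Rham differential $d$ is a graded derivation of degree $+1$ on $\Omega^*_{(A,M)/(R,P)}$ with $d^2=0$. By Theorem~\ref{thm:loghkrfilt}, the HKR map $\Phi_*$ is a morphism of strictly graded-commutative $A$-algebras, so both composites $\Phi\circ d$ and $B\circ\Phi$ are $\Phi$-semilinear, degree-$(+1)$ graded derivations $\Omega^*_{(A,M)/(R,P)}\to\pi_*\HH((A,M)/(R,P))$. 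Hence it suffices to verify the identity on a set of algebra generators of $\Omega^*_{(A,M)/(R,P)}$.

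The $A$-algebra $\Omega^*_{(A,M)/(R,P)}$ is generated by $A=\Omega^0$ together with degree-$1$ symbols $da$ ($a\in A$) and $\mathrm{dlog}(m)$ ($m\in M$). We therefore need to check: (i) $B(a)=\Phi_1(da)$ for $a\in A$; (ii) $B(\Phi_1(da))=0$ for $a\in A$; and (iii) $B(\Phi_1(\mathrm{dlog}(m)))=0$ for $m\in M$. For $a$ in the image of a ring-only polynomial generator, both (i) and (ii) reduce to the classical, non-logarithmic HKR compatibility between the Connes operator and the de Rham differential (see, e.g., \cite[Proposition IV.4.1]{NS18}). By the Leibniz rule and $d\alpha(m)=\alpha(m)\,\mathrm{dlog}(m)$, the remaining log-specific content of (i) is the single identity
\[
B(\alpha(m)) \;=\; \alpha(m)\cdot \Phi_1(\mathrm{dlog}(m)) \quad\text{in } \pi_1\HH((A,M)/(R,P)).
\]

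This last identity will be the main technical obstacle. By naturality in $(R,P)\to(A,M)$ and the base-change properties of log Hochschild homology (cf.\ Remark~\ref{rmk:properties_logHH}), we reduce to the universal case $(\Z,\{e\})\to(\Z[\N],\N)$. There we would use Rognes' model
\[
\HH((\Z[M],M)/(\Z[P],P)) \;\simeq\; \Z[M\times_{M^\gp}B^{\cy}_{P^\gp}(M^\gp)_\bullet]
\]
from Remark~\ref{rmk:properties_logHH} together with the explicit formula for the cyclic operator $t_q$ given in the text, tracing the $0$-simplex corresponding to $m$ through the $B$-operator to identify its image with the class representing $m\cdot\mathrm{dlog}(m)$ in $\pi_1$; this is a direct combinatorial computation in the replete bar construction. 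Claim (iii) then follows formally: applying $B$ to the identity above and using the Leibniz rule together with $B^2=0$ and strict graded-commutativity (which forces $\Phi_1(\mathrm{dlog}(m))^2=0$) yields $\alpha(m)\cdot B(\Phi_1(\mathrm{dlog}(m)))=0$, whence $B(\Phi_1(\mathrm{dlog}(m)))=0$ by reducing once more to the universal case where $\alpha(m)$ is a non-zero-divisor and invoking base change.
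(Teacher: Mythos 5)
Your proposal is correct, but it routes around the paper's computation rather than reproducing it, and it also misidentifies where the log-specific work lies. Both you and the paper reduce to showing $B(\Phi_1(\mathrm{dlog}(m)))=0$; the paper then performs an explicit combinatorial computation in the replete bar construction, exhibiting the relevant degree-2 chain as a boundary. Your first reduction (that $\Phi\circ d$ and $B\circ\Phi$ agree because both are $\Phi$-semilinear degree-$(+1)$ derivations squaring to zero, so it suffices to test on $A$, $da$, and $\mathrm{dlog}(m)$) is sound and is essentially the content of the reference to \cite[Proposition V.2.1.1]{Ogu18} in the paper. But you flag the identity $B(\alpha(m))=\alpha(m)\,\Phi_1(\mathrm{dlog}(m))$ as the ``main technical obstacle'' requiring a combinatorial computation; in fact this is formal: it is nothing but your item (i) applied to $a=\alpha(m)$, since $d^{\mathrm{log}}\alpha(m)=\alpha(m)\,\mathrm{dlog}(m)$ in $\Omega^1_{(A,M)/(R,P)}$ and the log HKR map restricts to the classical one. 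The genuine log content is item (iii), which you handle with a derivation argument followed by reduction to $(\Z,\{1\})\to(\Z[\N],\N)$ and a non-zero-divisor appeal. That appeal is imprecise as written: what matters is not that $t$ is a non-zero-divisor on $\Z[t]$ but that it acts injectively on $\pi_2\HH((\Z[\N],\N)/\Z)$. Fortunately this is vacuously true, because $(\Z,\{1\})\to(\Z[\N],\N)$ is log smooth, so by Theorem~\ref{thm:loghkrfilt}(2) the group $\pi_2\HH((\Z[\N],\N)/\Z)$ is identified with $\Omega^2_{(\Z[\N],\N)/\Z}=\wedge^2\bigl(\Z[t]\{\mathrm{dlog}(t)\}\bigr)=0$, a rank-one module having no wedge square. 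Once you observe this, the entire Leibniz argument in (iii) becomes unnecessary: $B(\Phi_1(\mathrm{dlog}(t)))=0$ holds trivially in the universal case, and naturality (the square of pre-log rings sending $t\mapsto m$ inducing a map $\HH((\Z[\N],\N)/\Z)\to\HH((A,M)/(R,P))$ compatible with $B$ and $\Phi$) transports it to the general case. So your approach is valid and, after this simplification, is arguably more economical than the paper's boundary-chasing, but it does lean on the log HKR theorem for log smooth maps in the universal case rather than on a direct chain-level verification.
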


\begin{proof} By the analogous statement for classical Hochschild homology \cite[Proposition 2.3.3]{Lod92}, we find by \cite[Proposition V.2.1.1]{Ogu18} that it suffices to prove that $B(\Phi_1({\rm dlog}(m)))$ is zero in $\pi_2{\rm HH}((A, M) / (R, P))$.

The computation of \cite[Proposition 5.15]{BLPO} shows that the defining pushout \eqref{rognespushout} of ${\rm HH}((A, M) / (R, P))$ gives rise to the pushout \[\begin{tikzcd}A \otimes_{{\Bbb Z}[M]} \Omega^1_{{\Bbb Z}[M] / {\Bbb Z}[P]} \ar{r} \ar{d} & A \otimes_{{\Bbb Z}[M]} \Omega^1_{({\Bbb Z}[M], M) / ({\Bbb Z}[P], P)} \ar{d} \\ \Omega^1_{A / R} \ar{r} & \Omega^1_{(A, M) / (R, P)}\end{tikzcd}\] of $A$-modules on $\pi_1$. Since we are only concerned with the image of a logarithmic differential ${\rm dlog}(m) \in \Omega^1_{({\Bbb Z}[M], M) / ({\Bbb Z}[P], P)}$ under the comparison map $\Phi_1$, we reduce to checking that $B(\Phi_1({\rm dlog}(m)))$ is zero in $\pi_2{\rm HH}(({\Bbb Z}[M], M) / ({\Bbb Z}[P], P))$. 

 The element $\Phi_1({\rm dlog}(m))$ is represented by the $1$-cycle $(1, (\gamma(m)^{-1}, \gamma(m)))$ in the Moore complex of ${\Bbb Z}[B^{\rep}_P(M)] \simeq {\rm HH}(({\Bbb Z}[M], M) / ({\Bbb Z}[P], P))$. Spelling out the definition of Connes' $B$-operator in this case (see \cite[Page 57]{Lod92}), we obtain \begin{align*}B(1, (\gamma(m)^{-1}, \gamma(m))) &=  (1, (1, \gamma(m)^{-1}, \gamma(m))) - (1, (1, \gamma(m), \gamma(m)^{-1})) \\ & + (1, (\gamma(m)^{-1}, 1, \gamma(m))) - (1, (\gamma(m) , 1, \gamma(m)^{-1})).\end{align*} Under the simplicial isomorphism ${\Bbb Z}[B^{\rep}_P(M)] \cong {\Bbb Z}[M] \otimes_{R} R[B(M^{\gp}/P^{\gp})]$ (see e.g., \cite[Lemma 5.10]{BLPO}), this element is sent to \begin{align*}& (1 \otimes \gamma(m)^{-1} \otimes \gamma(m)) - (1 \otimes \gamma(m) \otimes \gamma(m)^{-1})  +  (1 \otimes 1 \otimes \gamma(m)) - (1 \otimes 1 \otimes \gamma(m)^{-1}).\end{align*} Keeping in mind that the first coordinate of ${\Bbb Z}[M] \otimes_{{\Bbb Z}} R[B(M^{\gp}/P^{\gp})]$ does not interact with the simplicial structure, the remaining task is to exhibit \[(\gamma(m)^{-1} \otimes \gamma(m)) - (\gamma(m) \otimes \gamma(m)^{-1}) + (1 \otimes \gamma(m)) - (1 \otimes \gamma(m)^{-1})\] as a boundary in the Moore complex of ${\Bbb Z}[B(M^{\gp}/P^{\gp})]$. Direct computation shows that it is hit by \[
\gamma(m)\otimes \gamma(m)^{-1}\otimes \gamma(m) + 2(1\otimes 1 \otimes \gamma(m)) + \gamma(m)\otimes 1\otimes 1  -1\otimes 1\otimes \gamma(m)^{-1}
,\] which concludes the proof.
\end{proof}

By the log ${\rm HKR}$-filtration (Theorem \ref{thm:loghkrfilt}) and flat descent for the Gabber cotangent complex (Theorem \ref{thm:gabberdescent}), the following is now a consequence of the argument in \cite[Corollary 3.4]{BMS19}:

\begin{proposition}\label{prop:loghhflatdescent} Let $(R, P)$ be a pre-log ring. Then the functors \begin{align*}{\rm HH}((-, -) / (R, P)), \quad {\rm HC}^-((-, -) / (R, P)), \\ {\rm HH}((-, -) / (R, P))_{hS^1}, \quad {\rm HP}((-, -) / (R, P))\end{align*} are  hlf sheaves. \qed
\end{proposition}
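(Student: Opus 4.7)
The plan is to follow the strategy of \cite[Corollary 3.4]{BMS19}, substituting the log HKR-filtration of Theorem \ref{thm:loghkrfilt} for the classical one and invoking hlf descent for wedge powers of the Gabber cotangent complex (Theorem \ref{thm:gabberdescent}) in place of its non-log counterpart.

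\textbf{Descent for $\mathrm{HH}$.} By Theorem \ref{thm:loghkrfilt}, $\mathrm{HH}((-,-)/(R,P))$ admits a complete descending filtration whose $i$-th graded piece $(\wedge^i \L_{(-,-)/(R,P)})[i]$ is concentrated in degrees $\geq i$. By Theorem \ref{thm:gabberdescent}, each wedge power of the Gabber cotangent complex is an hlf sheaf. Each finite quotient of the filtration is then an iterated extension of hlf sheaves, and the completeness together with the connectivity ranges ensures the tower converges pointwise; thus $\mathrm{HH}((-,-)/(R,P))$ is an hlf sheaf as a limit of hlf sheaves.

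\textbf{Descent for $\mathrm{HC}^-$ and $\mathrm{HP}$.} Both functors are obtained from $\mathrm{HH}$ by applying $(-)^{hS^1}$ and $(-)^{tS^1}$ objectwise to its $S^1$-action. Since homotopy fixed points is a limit, it commutes with the Cech totalization underlying the sheaf condition, and hlf descent for $\mathrm{HH}$ propagates directly to $\mathrm{HC}^-$. For $\mathrm{HP}$, the Tate construction does not a priori commute with arbitrary limits, but the log Hochschild homology of each term in the Cech nerve is connective (as a simplicial commutative algebra), providing a uniform lower bound that is sufficient for $(-)^{tS^1}$ to commute with the totalization; this yields descent for $\mathrm{HP}$.

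\textbf{Descent for $\mathrm{HH}_{hS^1}$.} Finally, the homotopy orbit construction appears as the fiber in the norm cofiber sequence
\[
\mathrm{HH}((-,-)/(R,P))_{hS^1} \to \mathrm{HC}^-((-,-)/(R,P)) \to \mathrm{HP}((-,-)/(R,P)).
\]
Since fibers commute with limits and both right-hand terms are hlf sheaves by the previous step, $\mathrm{HH}((-,-)/(R,P))_{hS^1}$ is also an hlf sheaf.

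The principal subtleties, both resolved exactly as in the non-log proof, are the convergence of the HKR-filtration tower (allowing descent to pass from graded pieces to $\mathrm{HH}$) and the commutation of the Tate construction with the Cech totalization, which relies on the uniform connectivity of log Hochschild homology along the Cech nerve.
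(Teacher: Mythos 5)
Your proposal is correct and follows essentially the same route the paper intends: the paper marks this proposition as a \qed-consequence of the log HKR-filtration (Theorem \ref{thm:loghkrfilt}), hlf descent for wedge powers of the Gabber cotangent complex (Theorem \ref{thm:gabberdescent}), and the argument of \cite[Corollary 3.4]{BMS19}, and your write-up simply unpacks that citation step by step (complete HKR tower with connective graded pieces for $\mathrm{HH}$, commutation of $(-)^{hS^1}$ with the \v{C}ech totalization for $\mathrm{HC}^-$, uniform boundedness-below to commute $(-)^{tS^1}$ with the totalization for $\mathrm{HP}$, and the norm fiber sequence for $\mathrm{HH}_{hS^1}$).
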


Similarly, replete base change for the Gabber cotangent complex (Lemma \ref{lem:repletebasechange}) implies:

\begin{proposition}\label{prop:loghhrepletebasechange} Let $(R, P) \to (A, M)$ be a map of pre-log rings. Assume that there is a map $(A, M) \to (S, Q)$ with the property that the induced map $M^{\rm gp} \to Q^{\rm gp}$ is a surjection, and let $(A^{\rm rep}, M^{\rm rep}) \to (S, Q)$ denote the repletion of $(A, M) \to (S, Q)$. Then the canonical map \begin{equation}\label{repbasechange}A^{\rm rep} \otimes_A {\rm HH}((A, M) / (R, P)) \xrightarrow{} {\rm HH}((A^{\rm rep}, M^{\rm rep}) / (R, P)) \end{equation} is an equivalence.

\end{proposition}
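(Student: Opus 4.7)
The plan is to reduce the claim to the corresponding statement for the Gabber cotangent complex (Lemma \ref{lem:repletebasechange}) by comparing the log ${\rm HKR}$ filtrations of Theorem \ref{thm:loghkrfilt} on both sides of \eqref{repbasechange}. This runs parallel to the proof of Proposition \ref{prop:loghhflatdescent}, where flat descent for log ${\rm HH}$ is deduced from flat descent for the Gabber cotangent complex via the ${\rm HKR}$ filtration.

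First I would equip the source $A^{\rm rep} \otimes_A {\rm HH}((A, M)/(R, P))$ with the filtration obtained by applying $A^{\rm rep} \otimes_A -$ to the log ${\rm HKR}$ filtration of ${\rm HH}((A, M)/(R, P))$. Since the derived tensor product commutes with colimits in each variable, the associated graded pieces identify with
\[
A^{\rm rep} \otimes_A (\wedge^i_A \mathbb{L}_{(A, M)/(R, P)})[i] \simeq (\wedge^i_{A^{\rm rep}}(A^{\rm rep} \otimes_A \mathbb{L}_{(A, M)/(R, P)}))[i],
\]
using that derived wedge powers commute with derived base change. On the target, the log ${\rm HKR}$ filtration of ${\rm HH}((A^{\rm rep}, M^{\rm rep})/(R, P))$ has associated graded $(\wedge^i_{A^{\rm rep}} \mathbb{L}_{(A^{\rm rep}, M^{\rm rep})/(R, P)})[i]$. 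By functoriality of the ${\rm HKR}$ filtration under the map of pre-log rings $(A, M) \to (A^{\rm rep}, M^{\rm rep})$, the canonical map \eqref{repbasechange} is a map of filtered objects whose induced map on graded pieces is, up to the shift $[i]$, the derived $i$-th wedge power of the comparison map $A^{\rm rep} \otimes_A \mathbb{L}_{(A, M)/(R, P)} \to \mathbb{L}_{(A^{\rm rep}, M^{\rm rep})/(R, P)}$.

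By Lemma \ref{lem:repletebasechange}, this comparison map is an equivalence, and therefore so is each of its derived wedge powers. Since the log ${\rm HKR}$ filtration is complete (the $i$-th filtered piece sits in homotopical degrees $\geq i$), an equivalence on associated graded pieces implies that \eqref{repbasechange} itself is an equivalence. The main technical point to pin down is the compatibility of the source and target filtrations with the map \eqref{repbasechange}; this is a formal consequence of the construction of the log ${\rm HKR}$ filtration in \cite{BLPO} applied to the morphism $(A, M) \to (A^{\rm rep}, M^{\rm rep})$ of pre-log rings.
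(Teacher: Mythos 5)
Your proof is correct and follows exactly the same strategy as the paper's: compare the log HKR filtrations (Theorem \ref{thm:loghkrfilt}) on both sides of \eqref{repbasechange} and reduce the question on graded pieces to the replete base change equivalence for the Gabber cotangent complex (Lemma \ref{lem:repletebasechange}). The paper phrases the convergence step in terms of the associated strongly convergent spectral sequence rather than of completeness of a connective filtration, but these are equivalent formulations of the same argument.
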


\begin{proof} By Theorem \ref{thm:loghkrfilt}, the map \eqref{repbasechange} is a map of abutments of strongly convergent spectral sequences with $E^2$-terms \[ \wedge_{A^{\rm rep}}^* (A^{\rm rep} \otimes_A {\Bbb L}_{(A, M) / (R, P)}) \to \wedge^*_{A^{\rm rep}} {\Bbb L}_{(A^{\rm rep}, M^{\rm rep}) / (R, P)},\] from which the result follows from Lemma \ref{lem:repletebasechange}.
\end{proof}

As sketched in the introduction, we have established all necessary ingredients to apply the techniques of \cite{Ant19} to prove Theorem \ref{thm:logantieau}. We summarize this here:

\begin{proof}[Proof of Theorem \ref{thm:logantieau}] As explained in \cite[Page 510]{Ant19}, combining Example 2.4 of \emph{loc.\ cit.} with the log ${\rm HKR}$-filtration of Theorem \ref{thm:loghkrfilt} gives an ${\Bbb N}$-indexed filtration on log negative cyclic homology ${\rm HC}^-((A, M) / (R, P))$ with graded pieces cochain complexes of the form \[ 0 \to \Omega^n_{(A, M) / (R, P)} \to \Omega^{n + 1}_{(A, M) / (R, P)} \to \cdots.\] By Proposition \ref{prop:logderham}, the differential is the log de Rham differential. Inspecting \cite{Ant19}, this suffices to establish Theorem \ref{thm:logantieau}.
\end{proof}

\section{Logarithmic topological Hochschild homology}\label{sec:logthh} We now review the notion of logarithmic \emph{topological} Hochschild homology, as first introduced by Rognes \cite{Rog09}. As we shall only be concerned with pre-log ring spectra whose underlying ``commutative monoid'' is an ordinary commutative monoid, the theory simplifies significantly. In particular, we do not use the machinery of \cites{SS12, Sag14, RSS15, RSS18} to handle ``graded ${\Bbb E}_{\infty}$-spaces'' to capture homotopy classes in non-zero degrees. 

\subsection{Definition and first properties} For our purposes, a \emph{pre-log ring spectrum} $(A, M)$ consists of an ${\Bbb E}_{\infty}$-ring $A$, a commutative monoid $M$, and a map ${\Bbb S}[M] \to A$ of ${\Bbb E}_{\infty}$-rings.

The following is a variant of the definition of log topological Hochschild homology pursued in the second-named author's thesis \cites{Lun21, Lun22}, and is also closely related to \cite[Section 13]{Rog09}. 

\begin{definition} Let $(R, P) \to (A, M)$ be a map of pre-log ring spectra. The \emph{log topological Hochschild homology} ${\rm THH}((A, M) / (R, P))$ is defined as the pushout of the diagram \[A \xleftarrow{} (A \otimes_R A) \otimes_{{\Bbb S}[M \oplus_P M]} {\Bbb S}[(M \oplus_P M)^{\rm rep}] \xrightarrow{} A\] of ${\Bbb E}_{\infty}$-rings. 
\end{definition}

If $(R, P) = ({\Bbb S}, \{1\})$ with structure map the identity, 
we omit it from the notation and simply write $\THH(A,M)$.

\begin{remark}
    The definition is motivated by its relationship with the (spectral) log cotangent complex in \cite{Lun21}, in analogy with Example \ref{ex:derivedlogdiagonal}.
Note that if $(R, P) \to (A, M)$ is a map of ordinary pre-log rings, this recovers the log Hochschild homology ${\rm HH}((A, M)/(R, P))$ introduced in Definition \ref{def:loghh} (it is enough to spell out the replete base change). 
\end{remark}
As in the case of ordinary topological Hochschild homology, we have the following transitivity property:

\begin{proposition}{\cite[Lemma 5.4]{Lun21}}\label{lem:transitivitylogthh} Let $(R, P) \to (A, M) \to (B, N)$ be maps of pre-log ring spectra. There is a natural equivalence \[{\rm THH}((B, N) / (A, M)) \simeq A \otimes_{{\rm THH}((A, M) / (R, P))} {\rm THH}((B, N) / (R, P))\] of ${\Bbb E}_{\infty}$-rings.  
\end{proposition}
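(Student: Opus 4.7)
The plan is to mirror the strategy of the analogous result for log Hochschild homology \cite[Proposition 5.4]{BLPO} (cf.\ Remark \ref{rmk:properties_logHH}\eqref{eq:logHH_as_Rognes}), adapted from commutative rings to $\E_\infty$-ring spectra. I introduce the \emph{log diagonal algebra}
\[
D_{(R,P)}(A,M) := (A \otimes_R A) \otimes_{\bS[M \oplus_P M]} \bS[(M \oplus_P M)^{\rep}],
\]
so that by definition $\THH((A,M)/(R,P)) \simeq A \otimes_{D_{(R,P)}(A,M)} A$, and analogously for the two other instances of log THH appearing in the statement.

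The technical heart of the argument will be the base change identity
\[
D_{(A,M)}(B,N) \simeq D_{(R,P)}(B,N) \otimes_{D_{(R,P)}(A,M)} A,
\]
where $D_{(R,P)}(A,M) \to A$ is the fold map and $D_{(R,P)}(A,M) \to D_{(R,P)}(B,N)$ is induced by $(A,M) \to (B,N)$. I would establish this by combining three compatibilities: the standard pushout identity $B \otimes_A B \simeq (B \otimes_R B) \otimes_{A \otimes_R A} A$ in $\E_\infty$-rings; the monoidal pushout $N \oplus_M N \simeq (N \oplus_P N) \oplus_{M \oplus_P M} M$, which after applying the monoid ring functor yields $\bS[N \oplus_M N] \simeq \bS[N \oplus_P N] \otimes_{\bS[M \oplus_P M]} \bS[M]$; and the repletion compatibility
\[
(N \oplus_M N)^{\rep} \simeq (N \oplus_P N)^{\rep} \oplus_{(M \oplus_P M)^{\rep}} M,
\]
with each repletion formed relative to the appropriate fold map. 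The last identity can be verified from the pullback description $K^{\rep} = K^{\gp} \times_{X^{\gp}} X$ by combining the fact that group completion preserves pushouts of commutative monoids with the surjectivity of the fold maps after group completion.

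Granting the base change identity, the proposition will follow formally by comparing universal properties of pushouts in $\E_\infty$-rings. A cone on either $\THH((B,N)/(A,M))$ or $A \otimes_{\THH((A,M)/(R,P))} \THH((B,N)/(R,P))$ consists of a pair of maps $\beta_L, \beta_R : B \rightrightarrows X$ of $\E_\infty$-rings that agree after precomposition with the fold $D_{(R,P)}(B,N) \to B$ and with the structure map $A \to B$, which identifies the two sides. I expect the main obstacle to be the repletion compatibility: since repletion is defined by a pullback rather than a pushout, its interaction with pushouts of commutative monoids is not formally automatic and requires hands-on verification using the surjectivity hypothesis built into the definition of repletion.
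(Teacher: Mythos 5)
Your proposal is correct and takes essentially the route that the paper outsources to \cite[Lemma 5.4]{Lun21} (the analogous log HH statement \cite[Proposition 5.4]{BLPO} is proved the same way): write each log THH as $B \otimes_D B$ over the ``log diagonal'' ring $D$, establish the base change $D_{(A,M)}(B,N) \simeq D_{(R,P)}(B,N) \otimes_{D_{(R,P)}(A,M)} A$ by componentwise compatibilities, and conclude by Fubini for pushouts of $\mathbb{E}_\infty$-rings. One small point where your sketch is thinner than it should be: in step (3) it is not mere surjectivity but \emph{splitness} of the fold map after group completion that makes the argument go. Since $K^{\gp}\oplus_{X^{\gp}} K^{\gp}\to K^{\gp}$ admits a section and grouplike $\mathbb{E}_\infty$-monoids form a stable category, one gets a natural product decomposition $(K\oplus_X K)^{\gp}\cong K^{\gp}\times (K^{\gp}/X^{\gp})$ and hence $(K\oplus_X K)^{\rm rep}\cong K\times (K^{\gp}/X^{\gp})$, functorially in $X\to K$ (compare \cite[Lemma 5.10]{BLPO}). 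With this, $(N\oplus_P N)^{\rm rep}\oplus_{(M\oplus_P M)^{\rm rep}} M$ unwinds to $N\times\bigl((N^{\gp}/P^{\gp})\oplus_{M^{\gp}/P^{\gp}} 0\bigr)$, and the derived third isomorphism theorem $(N^{\gp}/P^{\gp})/(M^{\gp}/P^{\gp})\simeq N^{\gp}/M^{\gp}$ identifies this with $N\times(N^{\gp}/M^{\gp})\simeq (N\oplus_M N)^{\rm rep}$, confirming the compatibility you flagged as the main obstacle.
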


We shall mostly use the following consequence:

\begin{corollary}\label{cor:transitivitylogthh} Let $(A, M)$ be an ordinary pre-log ring. There is a natural equivalence \[{\rm HH}(A, M) \simeq {\Bbb Z} \otimes_{{\rm THH}({\Bbb Z})} {\rm THH}(A, M)\] of ${\Bbb E}_{\infty}$-rings.  
\end{corollary}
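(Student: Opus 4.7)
The plan is to deduce the result directly from the transitivity formula of Proposition \ref{lem:transitivitylogthh}. Consider the composite of maps of pre-log ring spectra
\[(\mathbb{S}, \{1\}) \longrightarrow (\mathbb{Z}, \{1\}) \longrightarrow (A, M),\]
where the first two have trivial pre-log structure. Applying Proposition \ref{lem:transitivitylogthh} to this composite yields a natural equivalence of $\mathbb{E}_{\infty}$-rings
\[{\rm THH}((A, M)/(\mathbb{Z}, \{1\})) \simeq \mathbb{Z} \otimes_{{\rm THH}((\mathbb{Z}, \{1\})/(\mathbb{S}, \{1\}))} {\rm THH}((A, M)/(\mathbb{S}, \{1\})).\]
By the stated convention that the trivial base $(\mathbb{S}, \{1\})$ is suppressed from the notation, and because log ${\rm THH}$ with trivial pre-log structure reduces to ordinary ${\rm THH}$, the two factors on the right-hand side become ${\rm THH}(\mathbb{Z})$ and ${\rm THH}(A, M)$ respectively.

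It then remains to identify the left-hand side ${\rm THH}((A, M)/(\mathbb{Z}, \{1\}))$ with ${\rm HH}(A, M)$. This is precisely the content of the remark immediately following the definition of log topological Hochschild homology: since $(\mathbb{Z}, \{1\}) \to (A, M)$ is a map of \emph{ordinary} pre-log rings, the $\mathbb{E}_{\infty}$-pushout defining ${\rm THH}((A, M)/(\mathbb{Z}, \{1\}))$ agrees, under the standard equivalence between connective $\mathbb{E}_{\infty}$-$\mathbb{Z}$-algebras and simplicial commutative $\mathbb{Z}$-algebras, with the pushout $A \otimes_{(A \otimes_{\mathbb{Z}} A)^{\rm rep}} A$ of Definition \ref{def:loghh} defining ${\rm HH}((A, M)/(\mathbb{Z}, \{1\})) = {\rm HH}(A, M)$. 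Concretely, the point to check is that the tensor over $\mathbb{S}[M \oplus M]$ of connective $\mathbb{Z}[M \oplus M]$-algebras appearing in the spectral formula agrees with the derived tensor over $\mathbb{Z}[M \oplus M]$ used in Definition \ref{def:loghh}; this is the content of spelling out the replete base change, as alluded to in the remark.

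Given that the argument is just a direct application of the transitivity proposition combined with a previously established identification, I do not expect any genuine obstacle. The only item requiring attention is that the identification ${\rm THH}((A, M)/(\mathbb{Z}, \{1\})) \simeq {\rm HH}(A, M)$ be natural in $(A, M)$, but this is automatic from the functoriality of both pushout constructions and of the comparison between connective $\mathbb{E}_{\infty}$-$\mathbb{Z}$-algebras and simplicial commutative $\mathbb{Z}$-algebras.
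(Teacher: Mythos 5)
Your proof is correct and is exactly the intended argument: apply Proposition~\ref{lem:transitivitylogthh} to the composite $(\mathbb{S},\{1\}) \to (\mathbb{Z},\{1\}) \to (A,M)$ and use the identification ${\rm THH}((A,M)/(\mathbb{Z},\{1\})) \simeq {\rm HH}(A,M)$ from the remark after the definition of log ${\rm THH}$, which is the paper's implicit reasoning for stating this as a corollary.

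One small caution on wording: there is no ``standard equivalence between connective $\mathbb{E}_\infty$-$\mathbb{Z}$-algebras and simplicial commutative $\mathbb{Z}$-algebras'' --- these categories are genuinely different (e.g., the free objects disagree). What you actually need, and what makes the identification ${\rm THH}((A,M)/(\mathbb{Z},\{1\})) \simeq {\rm HH}(A,M)$ correct, is weaker and true: the forgetful functor from simplicial commutative $\mathbb{Z}$-algebras to $\mathbb{E}_\infty$-$\mathbb{Z}$-algebras preserves the relative tensor products appearing here (both are derived tensor products over the same base at the level of underlying modules), and base change along $\mathbb{S} \to \mathbb{Z}$ turns $\mathbb{S}[M\oplus M] \to \mathbb{S}[(M\oplus M)^{\rm rep}]$ into $\mathbb{Z}[M\oplus M] \to \mathbb{Z}[(M\oplus M)^{\rm rep}]$, so the replete base changes in the two definitions match after spelling them out. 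Replacing the appeal to a phantom equivalence of categories with this compatibility fixes the exposition without affecting the substance.
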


In particular, there is a canonical map \begin{equation}\label{logthhlinearization}{\rm THH}(A, M) \to {\rm HH}(A, M)\end{equation} of ${\Bbb E}_{\infty}$-rings, obtained by extension of scalars along the augmentation ${\rm THH}(\Bbb Z) \to {\Bbb Z}$. 

\begin{lemma}\label{lem:logthhlinearization} The map \eqref{logthhlinearization} induces an isomorphism \[\pi_i{\rm THH}(A, M) \to \pi_i{\rm HH}(A, M)\] for $i \le 2$.  
\end{lemma}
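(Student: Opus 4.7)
The plan is to reduce the statement to Bökstedt's classical computation of $\pi_*\mathrm{THH}(\mathbb{Z})$ by means of the base-change formula of Corollary \ref{cor:transitivitylogthh}. Concretely, since
\[
\mathrm{HH}(A,M) \simeq \mathbb{Z} \otimes_{\mathrm{THH}(\mathbb{Z})} \mathrm{THH}(A,M),
\]
the map \eqref{logthhlinearization} is obtained by base-changing the augmentation $\varepsilon \colon \mathrm{THH}(\mathbb{Z}) \to \mathbb{Z}$ along the unit $\mathrm{THH}(\mathbb{Z}) \to \mathrm{THH}(A,M)$. In particular, if $F$ denotes the fiber of $\varepsilon$, then the fiber of \eqref{logthhlinearization} is $F \otimes_{\mathrm{THH}(\mathbb{Z})} \mathrm{THH}(A,M)$.

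Next, I would invoke Bökstedt's computation, which gives $\pi_0 \mathrm{THH}(\mathbb{Z}) = \mathbb{Z}$, $\pi_{2k}\mathrm{THH}(\mathbb{Z}) = 0$ for $k \ge 1$, and $\pi_{2k-1}\mathrm{THH}(\mathbb{Z}) = \mathbb{Z}/k$ for $k \ge 1$. In particular $\pi_1\mathrm{THH}(\mathbb{Z}) = \pi_2\mathrm{THH}(\mathbb{Z}) = 0$, and the first nontrivial homotopy group above degree $0$ sits in degree $3$. A quick walk through the long exact sequence for $F \to \mathrm{THH}(\mathbb{Z}) \to \mathbb{Z}$ then yields $\pi_i F = 0$ for $i \le 2$ and $\pi_3 F = \mathbb{Z}/2$; equivalently, $F$ is $3$-connective.

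To conclude, I would use that $\mathrm{THH}(\mathbb{Z})$ is a connective $\mathbb{E}_\infty$-ring and that the tensor product over a connective base of an $n$-connective and an $m$-connective module is $(n+m)$-connective. Since $\mathrm{THH}(A,M)$ is connective and $F$ is $3$-connective, the fiber $F \otimes_{\mathrm{THH}(\mathbb{Z})} \mathrm{THH}(A,M)$ of \eqref{logthhlinearization} is $3$-connective, so the long exact sequence in homotopy shows that the map $\pi_i \mathrm{THH}(A,M) \to \pi_i \mathrm{HH}(A,M)$ is an isomorphism for $i \le 2$.

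The only real content is the connectivity calculation for $F$; this is not so much an obstacle as a direct input from Bökstedt's theorem. The rest is purely formal.
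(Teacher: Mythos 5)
Your argument is correct and reaches the conclusion by a slightly different route than the paper, though both rest on the same inputs. Both proofs start from the equivalence $\HH(A,M)\simeq \mathbb{Z}\otimes_{\THH(\mathbb{Z})}\THH(A,M)$ of Corollary \ref{cor:transitivitylogthh} and on B\"okstedt's vanishing of $\pi_1\THH(\mathbb{Z})$ and $\pi_2\THH(\mathbb{Z})$. The paper then invokes the $\Tor$-spectral sequence $E^2_{p,q}=\Tor_p^{\pi_*\THH(\mathbb{Z})}(\mathbb{Z},\pi_*\THH(A,M))_q\Rightarrow\pi_*\HH(A,M)$ of this pushout; since the augmentation ideal of $\pi_*\THH(\mathbb{Z})$ is concentrated in degrees $\ge 3$, the higher $\Tor$ terms contribute nothing in total degree $\le 2$, and $E^2_{0,q}$ reduces to $\pi_q\THH(A,M)$ there. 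You instead tensor the fiber sequence $F\to\THH(\mathbb{Z})\to\mathbb{Z}$ with $\THH(A,M)$ over $\THH(\mathbb{Z})$, note that $F$ is $3$-connective (your LES computation is correct, and gives $\pi_3 F\cong\mathbb{Z}/2$), and use the standard connectivity estimate for relative tensor products over a connective base to conclude that the fiber of \eqref{logthhlinearization} is $3$-connective. These are two packagings of the same underlying fact, but your version avoids the spectral sequence and makes the degree bookkeeping entirely explicit; the one additional ingredient you implicitly use, namely that $\THH(A,M)$ is connective, follows immediately either from the defining pushout of connective $\mathbb{E}_\infty$-rings or from Proposition \ref{lem:rognescomp}, so there is no gap.
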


\begin{proof}  This follows B\"okstedt's computation of $\pi_*{\rm THH}({\Bbb Z})$ \cite{Bo86} (in particular, its vanishing in degrees $1$ and $2$) and the ${\rm Tor}$-spectral sequence \[E^2_{p,q} = {\rm Tor}_p^{\pi_*{\rm THH}({\Bbb Z})}({\Bbb Z}, \pi_*{\rm THH}(A, M))_q \implies \pi_*{\rm HH}(A, M)\] obtained from Corollary \ref{cor:transitivitylogthh}.  
\end{proof}

Finally, we will need to refer to Rognes' formulation of log topological Hochschild homology:

\begin{proposition}\label{lem:rognescomp} There is a natural equivalence \[{\rm THH}(A, M) \simeq {\rm THH}(A) \otimes_{{\Bbb S}[B^{\rm cyc}(M)]} {\Bbb S}[M \times_{M^{\rm gp}} B^{\rm cyc}(M^{\rm gp})]\] of ${\Bbb E}_{\infty}$-ring spectra. 
\end{proposition}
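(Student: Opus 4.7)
The strategy is to express both sides as iterated pushouts of $E_\infty$-rings and identify them via the pasting law. The first step is to establish the pushout descriptions
\[B^{\cy}(M) \simeq M \otimes_{M \oplus M} M \quad \text{and} \quad B^{\cy, \rep}(M) \simeq M \otimes_{(M \oplus M)^{\rep}} M\]
as homotopy pushouts of commutative monoids, in both cases with structure maps the (repletion of the) fold map $M \oplus M \to M$. The first is standard (arising from $S^1 \simeq * \cup_{S^0} *$); the second is the log counterpart of the analogous fact for log Hochschild homology used in Remark \ref{rmk:properties_logHH}\eqref{eq:logHH_as_Rognes}. It can be verified by comparing both sides with Rognes' pullback description $B^{\cy, \rep}(M) = M \times_{M^{\gp}} B^{\cy}(M^{\gp})$, using that $(M \oplus M) \to (M \oplus M)^{\rep}$ is an equivalence on group completion. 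Since $\bS[-]$ is a left adjoint, applying it yields
\[\bS[B^{\cy}(M)] \simeq \bS[M] \otimes_{\bS[M \oplus M]} \bS[M] \simeq \THH(\bS[M]), \quad \bS[B^{\cy, \rep}(M)] \simeq \bS[M] \otimes_{\bS[(M \oplus M)^{\rep}]} \bS[M].\]

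The second step is to apply the general categorical identity
\[W \otimes_{X \otimes_Y Z} W \simeq (W \otimes_X W) \otimes_{W \otimes_Y W} (W \otimes_Z W)\]
(valid in any symmetric monoidal $\infty$-category and easily verified from the universal property) to the defining pushout $\THH(A, M) = A \otimes_C A$ with $C = (A \otimes A) \otimes_{\bS[M \oplus M]} \bS[(M \oplus M)^{\rep}]$. Setting $W = A$, $X = A \otimes A$, $Y = \bS[M \oplus M]$, $Z = \bS[(M \oplus M)^{\rep}]$, and recognizing $\THH(A) = A \otimes_{A \otimes A} A$, this produces
\[\THH(A, M) \simeq \THH(A) \otimes_{A \otimes_{\bS[M \oplus M]} A} (A \otimes_{\bS[(M \oplus M)^{\rep}]} A).\]

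Finally, the outer tensor factors admit a clean reformulation via the pasting lemma applied to the $3 \times 3$ grid arising from the factorization $\bS[M \oplus M] \to \bS[M] \to A$: this yields $A \otimes_{\bS[M \oplus M]} A \simeq (A \otimes A) \otimes_{\bS[M \oplus M]} \THH(\bS[M])$, and similarly $A \otimes_{\bS[(M \oplus M)^{\rep}]} A \simeq (A \otimes A) \otimes_{\bS[M \oplus M]} \bS[B^{\cy, \rep}(M)]$. A direct comparison of universal properties in $E_\infty$-rings, using that $\THH(A)$ is canonically an $(A \otimes A)$-algebra (which allows the base-change factors of $A \otimes A$ to be absorbed), then yields the desired equivalence
\[\THH(A, M) \simeq \THH(A) \otimes_{\bS[B^{\cy}(M)]} \bS[B^{\cy, \rep}(M)].\]
The main technical obstacle is the pushout description of $B^{\cy, \rep}(M)$ established in the first step; the remaining manipulations are formal applications of the pasting lemma and universal-property comparisons.
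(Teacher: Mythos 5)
The paper does not give a proof: it simply cites \cite[Section 13]{Rog09} and \cite[Proposition 5.8]{Lun21}. You have reconstructed the argument that those references package, and the overall shape of the reasoning --- expressing both sides as iterated pushouts and then shuffling them via the $3\times 3$ (pasting) lemma for colimits --- is what those sources do. In that sense your route is the ``same'' one, just spelled out rather than cited, and it is sound. Two remarks.

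First, you correctly isolate the only genuinely nontrivial input, the monoid-level identification $B^{\cy,\rep}(M) = M \times_{M^{\gp}} B^{\cy}(M^{\gp}) \simeq M \otimes_{(M \oplus M)^{\rep}} M$, but the justification you offer for it (``compare both sides with the pullback description, using that $M \oplus M \to (M\oplus M)^{\rep}$ is an equivalence on group completions'') is too thin. Group-completion invariance only produces a map $M \otimes_{(M\oplus M)^{\rep}} M \to M \times_{M^{\gp}} B^{\cy}(M^{\gp})$; it does not explain why a pushout along one leg and a pullback along the other commute here, i.e.\ why the pushout is already replete over $M$. A clean way to close this is to change coordinates on $(M^{\gp})^{2}$ by $(x,y) \mapsto (xy,y)$, which identifies $(M\oplus M)^{\rep} = M \times_{M^{\gp}} (M^{\gp})^{2}$ with $M \oplus M^{\gp}$, the projection to $M$ becoming the first-coordinate projection. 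The span $M \leftarrow (M\oplus M)^{\rep} \to M$ then splits (using semiadditivity of animated commutative monoids, so that $\oplus = \times$) as the coproduct of the identity span on $M$ and the span $\ast \leftarrow M^{\gp} \to \ast$, whence the pushout is $M \times B(M^{\gp})$; the same change of coordinates computes $M \times_{M^{\gp}} B^{\cy}(M^{\gp}) = M \times_{M^{\gp}}(M^{\gp}\times B(M^{\gp})) = M \times B(M^{\gp})$, and the identification is visibly compatible with the structure maps.

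Second, the closing reduction from $\THH(A)\otimes_{A\otimes_{\bS[M\oplus M]} A}(A\otimes_{\bS[(M\oplus M)^{\rep}]} A)$ to $\THH(A)\otimes_{\bS[B^{\cy}M]}\bS[B^{\cy,\rep}M]$ is handled a bit loosely. The reliable way to phrase it: with $C = \bS[B^{\cy}M]$, $D=\bS[B^{\cy,\rep}M]$, $E = A\otimes_{\bS[M\oplus M]} A$, $F=A\otimes_{\bS[(M\oplus M)^{\rep}]} A$, one has $E \simeq A\otimes_{\bS[M]}C\otimes_{\bS[M]}A$ and $F \simeq E\otimes_{C} D$ by base change, and then $\THH(A)\otimes_{E}F \simeq \THH(A)\otimes_{C}D$ once one checks that the composite $C\to E\to\THH(A)$ is the map induced by $\bS[M]\to A$, which it is by construction of the pasted squares. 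With those two points supplied, the proposal is a complete proof.
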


\begin{proof} This can be deduced from \cite[Section 13]{Rog09}, or by applying \cite[Proposition 5.8]{Lun21} to the class of pre-log ring spectra considered here. 
\end{proof}

\subsection{Cyclotomic structure}\label{subsec:cyclotomic} 
The description of Proposition \ref{lem:rognescomp} gives a convenient way to characterize the circle action on log topological Hochschild homology. As explained below, this gives a cyclotomic structure on absolute log topological Hochschild homology. 

\begin{construction}\label{constr:cycl_structure} Let $M$ be a (discrete) commutative monoid. Recall from \cite[Lemma IV.3.1]{NS18} that topological Hochschild homology of the spherical monoid ring ${\Bbb S}[M]$ is identified with the suspension spectrum ${\Bbb S}[B^{\rm cyc}(M)] := \Sigma^\infty_+B^{\rm cyc}(M)$  of the geometric realization   of the cyclic bar construction $B^{\rm cyc}_\bullet(M)$.  

By  \cite[Proposition B.22]{NS18}, the space $B^{\rm cyc}(M)$ admits a canonical ${\Bbb T}$-action, and by \cite[Lemma IV.3.1(i)]{NS18} there is, for every prime $p$, a ${\Bbb T}$-equivariant map 
\[\psi_p\colon B^{\rm cyc}(M) \to B^{\rm cyc}(M)^{hC_p} \]
which refines the cyclotomic structure on  ${\rm THH}({\Bbb S}[M])$, in the sense that 
it fits in a commutative diagram 
\[\begin{tikzcd}{\rm THH}({\Bbb S}[M]) \cong {\Bbb S}[B^{\rm cyc}(M)] \ar{d}{\varphi_p} \ar{r}{{\Bbb S}[\psi_p]} & {\Bbb S}[B^{\rm cyc}(M)^{hC_p}] \ar{d} \\ {\rm THH}({\Bbb S}[M])^{tC_p} \cong {\Bbb S}[B^{\rm cyc}(M)]^{tC_p} & {\rm THH}({\Bbb S}[M])^{hC_p} \cong {\Bbb S}[B^{\rm cyc}(M)]^{hC_p} \ar{l}{{\rm can}}.\end{tikzcd}\] 
where $\varphi_p$ is the cyclotomic structure map of ${\rm THH}({\Bbb S}[M])$. 
By construction \cite[Lemma IV.3.1(ii)]{NS18}, the map $\psi_p$ sits in a commutative diagram \[\begin{tikzcd}M \ar{r} \ar{d}{\Delta} & B^{\rm cyc}(M) \ar{d}{\psi_p} \\ (M \times \cdots \times M)^{hC_p} \ar{r} & B^{\rm cyc}(M)^{hC_p}.\end{tikzcd}\] 
where the top horizontal arrow is the map induced by the inclusion of $M$ as $0$-simplex of $B^{\rm cyc}_\bullet(M)$ to its colimit $B^{\rm cyc}(M)$, and similarly the lower horizontal arrow is induced from the inclusion of the $[p]$-th object into the colimit. In particular, if we replace $M$ by  $M^{\rm gp}$, we obtain that $\psi_p$ is compatible with the augmentation map $B^{\rm cyc}(M^{\rm gp}) \to M^{\rm gp}$, in the sense that the diagram
\[\begin{tikzcd}B^{\rm cyc}(M^{\rm gp}) \ar{dd}{\psi_p} \ar{r} & M^{\rm gp} \ar{d}{\Delta} & M \ar{d}{\Delta} \ar{l} \\ \vspace{10 mm} & (M^{\rm gp} \times \cdots \times M^{\rm gp})^{hC_p} \ar{d} & (M \times \cdots \times M)^{hC_p} \ar{d} \ar{l} \\ B^{\rm cyc}(M^{\rm gp})^{hC_p} \ar{r} & (M^{\rm gp})^{hC_p} & M^{hC_p} \ar{l}  \end{tikzcd}\] 
is commutative. Forming horizontal pullbacks we obtain a canonical morphism $\psi_p \colon B^{\rm rep}(M) \to B^{\rm rep}(M)^{hC_p}$ compatible with the analogous map on cyclic bar constructions. From this we obtain the \emph{replete Frobenius} \[\varphi_p \colon {\Bbb S}[B^{\rm rep}(M)] \xrightarrow{{\Bbb S}[\psi_p]} {\Bbb S}[B^{\rm rep}(M)^{hC_p}] \xrightarrow{} {\Bbb S}[B^{\rm rep}(M)]^{hC_p} \xrightarrow{{\rm can}} {\Bbb S}[B^{\rm rep}(M)]^{tC_p}\] which is compatible with that on ${\rm THH}({\Bbb S}[M]) \cong {\Bbb S}[B^{\rm cyc}(M)]$ by construction. Note that this procedure canonically equips ${\Bbb S}[B^{\rm rep}(M)]$ with the structure of an $\mathbb{E}_\infty$-algebra in the category of  cyclotomic spectra.
\end{construction}
\begin{definition}
    We define the Frobenius on log topological Hochschild homology ${\rm THH}(A, M) = {\rm THH}(A) \otimes_{{\Bbb S}[B^{\rm cyc}(M)]} {\Bbb S}[B^{\rm rep}(M)]$ by base change along ${\Bbb S}[M] \to A$ of the replete Frobenius
    \[\varphi_p \colon {\Bbb S}[B^{\rm rep}(M)]  \to {\Bbb S}[B^{\rm rep}(M)]^{tC_p}\]
    of Construction \ref{constr:cycl_structure}. That is, it is defined by the composition
    \[\begin{tikzcd}
  {\rm THH}(A) \otimes_{{\Bbb S}[B^{\rm cyc}(M)]} {\Bbb S}[B^{\rm rep}(M)] \arrow[r] \arrow[rd, "\varphi_p"'] & {\rm THH}(A)^{tC_p} \otimes_{{\Bbb S}[B^{\rm cyc}(M)]^{tC_p}} {\Bbb S}[B^{\rm rep}(M)]^{tC_p} \ar{d} \\
     & {\rm THH}(A, M)^{tC_p},\end{tikzcd}
    \] 
   where the right-hand vertical map arises from lax monoidality of the Tate construction \cite[Theorem I.3.1]{NS18}.
\end{definition}
\begin{remark}
    The construction given above of the cyclotomic structure  on log topological Hochschild homology is  dependent upon the fact that we are working on (pre)-log ring spectra of the form $(A, M)$ for an $\mathbb{E}_\infty$-ring $A$ and a discrete monoid $M$ (so that we can directly borrow some computations from \cite{NS18}. See also \cite[p.888]{Ob18} for a sketch of the same construction).  The construction of the cyclotomic structure for more general log ring spectra is significantly more difficult, as discussed in future work of Rognes--Sagave--Schlichtkrull (see also \cite{Rog}). 
\end{remark}

\subsection{Descent properties} As a consequence of Proposition \ref{prop:loghhflatdescent} and Corollary \ref{cor:transitivitylogthh}, the argument of \cite[Corollary 3.4]{BMS19} applies to obtain:

\begin{proposition}\label{prop:logthhflatdescent} Let $(R, P)$ be a pre-log ring spectrum. Then the functors \begin{align*}{\rm THH}((-, -) / (R, P)), \quad {\rm TC}^-((-, -) / (R, P)), \\ {\rm THH}((-, -) / (R, P))_{hS^1}, \quad {\rm TP}((-, -) / (R, P))\end{align*} are log hlf sheaves. 
\end{proposition}

Similarly, Proposition \ref{prop:loghhrepletebasechange} applies to obtain: 

\begin{proposition}\label{prop:logthhrepletebasechange} Let $(R, P) \to (A, M)$ be a map of pre-log rings. Assume that there is a map $(A, M) \to (S, Q)$ with the property that the induced map $M^{\rm gp} \to Q^{\rm gp}$ is a surjection, and let $(A^{\rm rep}, M^{\rm rep}) \to (S, Q)$ denote the repletion of $(A, M) \to (S, Q)$. Then the canonical map \[A^{\rm rep} \otimes_A {\rm THH}((A, M) / (R, P)) \xrightarrow{} {\rm THH}((A^{\rm rep}, M^{\rm rep}) / (R, P)) \] is an equivalence. 
\end{proposition}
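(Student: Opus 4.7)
The plan is to pattern the proof after that of Proposition \ref{prop:loghhrepletebasechange} for log Hochschild homology, whose key input is Lemma \ref{lem:repletebasechange}. Since no direct HKR-style filtration on log THH is available at this stage of the paper, I would bootstrap from the log HH case using Corollary \ref{cor:transitivitylogthh}, parallel to how Proposition \ref{prop:logthhflatdescent} is deduced from Proposition \ref{prop:loghhflatdescent}.

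First, I would reduce to the absolute case. Applying Proposition \ref{lem:transitivitylogthh} to the composite $(\bS, \{1\}) \to (R, P) \to (A, M)$ (and similarly for $(A^{\rm rep}, M^{\rm rep})$), the target replete base change map becomes $R \otimes_{\THH(R, P)}(-)$ applied to the absolute version
\[A^{\rm rep} \otimes_A \THH(A, M) \to \THH(A^{\rm rep}, M^{\rm rep}).\]
Thus it suffices to prove this absolute version.

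For the absolute version, both sides are $\THH(\Z)$-modules and the comparison map is $\THH(\Z)$-linear. By Corollary \ref{cor:transitivitylogthh}, base change along $\THH(\Z) \to \Z$ produces the corresponding replete base change map for absolute log Hochschild homology,
\[A^{\rm rep} \otimes_A \HH(A, M) \to \HH(A^{\rm rep}, M^{\rm rep}),\]
which is an equivalence by Proposition \ref{prop:loghhrepletebasechange}. Filtering $\Z$ as a $\THH(\Z)$-module by the Postnikov tower of $\THH(\Z)$ induces compatible, complete filtrations on both sides of the absolute THH map, whose associated graded pieces are obtained from the log HH version by tensoring with $\pi_*\THH(\Z)$. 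Since the log HH version is an equivalence, so is the induced map of associated gradeds, and convergence then yields the desired absolute equivalence.

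The main obstacle is the convergence of this filtration argument. Since $\THH(\Z)$ is connective, both sides of the absolute map are connective $\THH(\Z)$-modules, and the relevant filtrations are complete and separated. The crux reduces to the general fact that a map of connective $\THH(\Z)$-modules inducing an equivalence after $\Z \otimes_{\THH(\Z)} (-)$ is itself an equivalence, which follows by induction on the Postnikov tower using the identification $\pi_0 (\Z \otimes_{\THH(\Z)} C) \cong \pi_0 C$ for connective $C$.
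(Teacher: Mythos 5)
Your proof is correct and spells out exactly the bootstrapping argument the paper leaves implicit (the paper simply says Proposition \ref{prop:loghhrepletebasechange} ``similarly applies,'' alluding to the same argument of \cite[Corollary 3.4]{BMS19} used for Proposition \ref{prop:logthhflatdescent}). Both the reduction to the absolute case via the transitivity formula and the conservativity of $\Z \otimes_{\THH(\Z)} (-)$ on connective $\THH(\Z)$-modules (via the lowest-nonvanishing-homotopy argument) are sound; the only thing worth making explicit is that the comparison map really is $\THH(\Z)$-linear, which follows since it is a map of $\mathbb{E}_\infty$-rings under $\THH(\Z)$ by functoriality.
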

 \begin{definition} We define log ${\rm TC}^{-}((R, P) ; {\Bbb Z}_p), {\rm TP}((R, P) ; {\Bbb Z}_p)$ and ${\rm TC}((R, P) ; {\Bbb Z}_p)$ to be that of the cyclotomic spectrum ${\rm THH}((R, P) ; {\Bbb Z}_p)$: \[{\rm TC}^-((R, P) ; {\Bbb Z}_p) := {\rm THH}((R, P) ; {\Bbb Z}_p)^{hS^1}, \quad {\rm TP}((R, P) ; {\Bbb Z}_p) := {\rm THH}((R, P) ; {\Bbb Z}_p)^{tS^1}, \] \[{\rm TC}((R, P) ; {\Bbb Z}_p) := {\rm fib}({\rm TC^{-1}((R, P) ; {\Bbb Z}_p)} \xrightarrow{\varphi_p^{hS^1} - {\rm can}} {\rm TP}((R, P) ; {\Bbb Z}_p)).\] By definition, these recover the usual, non-logarithmic notions when $P$ is the trivial monoid. 
\end{definition}

\section{The log quasisyntomic site} We now pursue a logarithmic version of the quasisyntomic site introduced in \cite{BMS19}. A similar theory has been developed by Koshikawa--Yao \cite{KY22}. They pursue a log version of the log prismatic cohomology developed in \cite{BS22}; in particular, they do not discuss the Nygaard-complete version of log prismatic cohomology considered here.

\subsection{Recollection on quasisyntomic rings} Let us briefly recall the notion of quasisyntomic ring from \cite[Section 4]{BMS19} and the basic properties of the resulting site. As in \emph{loc.\ cit.} all rings will be implicitly  $p$-complete and with bounded $p^\infty$-torsion. By definition, a ring $A$ is \emph{quasisyntomic} if it has bounded $p^\infty$-torsion and ${\Bbb L}_{A / {\Bbb Z}_p}$ has $p$-complete ${\rm Tor}$-amplitude in $[-1, 0]$. 

\begin{definition}[{\cite[Definition 4.10]{BMS19}}] Let $f \colon A \to B$ be a map of commutative rings. We say that $f$ is \emph{quasisyntomic} if 

\begin{enumerate}
\item $A \to B$ is $p$-completely flat, i.e., $B\otimes_A A/p$ is concentrated in degree $0$ and  is a flat $A/p$-module,  and
\item ${\Bbb L}_{B/A}$ has $p$-complete ${\rm Tor}$-amplitude in $[-1, 0]$. 
\end{enumerate}

If, in addition, the underlying map $f \colon A \to B$ of commutative rings is $p$-completely faithfully flat (that is, $B\otimes_A A/p$ is also a faithfully flat $A/p$-module), we say that $f$ is a \emph{quasisyntomic cover}.

\end{definition}

\begin{definition}[{\cite[Definition 4.20]{BMS19}}] The ring $S$ is \emph{quasiregular semiperfectoid} if the following conditions are satisfied: 

\begin{enumerate}
\item $S$ is quasisyntomic;
\item $S$ admits a map from a perfectoid ring;
\item $S/pS$ is semiperfect; that is, its Frobenius is surjective.  
\end{enumerate}
We shall write ${\rm QRSPerfd}$ for the resulting category. 
\end{definition}

Recall from \cite[Lemma 4.27]{BMS19} that ${\rm QRSPerfd}^{\rm op}$ admits the structure of a site and that the collection of sheaves on ${\rm QSyn}^{\rm op}$ identify with the collection of sheaves on ${\rm QRSPerfd}^{\rm op}$ (with values in any presentable ${\infty}$-category ${\cal C}$): This is the content of \cite[Proposition 4.31]{BMS19}.  The resulting equivalence is denoted \[{\rm Shv}_{\cal C}({\rm QRSPerfd}^{\rm op}) \xrightarrow{\simeq} {\rm Shv}_{\cal C}({\rm QSyn}^{\rm op}), \quad F \mapsto F^\sqsupset\] and the sheaf $F^\sqsupset$ is called the \emph{unfolding} of $F$.

\subsection{Complete monoid rings} 
We shall adopt the following convention regarding monoid rings: For any commutative monoid $M$, we let ${\Bbb Z}_p \langle M \rangle$ denote the $p$-completion of the monoid ring ${\Bbb Z}_p[M]$. For a different $p$-complete base ring $R$,
we shall write $R\langle M \rangle$ for the $p$-complete tensor product $R \widehat{\otimes}_{{\Bbb Z}_p} {\Bbb Z}_p \langle M \rangle$.  

\subsection{(Semi)perfect monoids} Fix a prime number $p$. Given any commutative monoid $M$, we write \[F_M \colon M \to M, \quad m \mapsto m^p\] for its $p$-power map.

\begin{definition} We call a monoid 

\begin{enumerate}
\item \emph{perfect} if $F_M$ is an isomorphism, and
\item \emph{semiperfect} if $F_M$ is surjective.
\end{enumerate}
\end{definition} 

For example, the additive monoid ${\Bbb Q}_p/{\Bbb Z}_p$ is semiperfect, but not perfect. This definition gives rise to notions of direct and inverse limit perfections: \[M_{\rm perf} := {\rm colim}(M \xrightarrow{F_M} M \xrightarrow{F_M} \cdots), \quad M^\flat := {\rm lim}(\cdots \xrightarrow{F_M} M \xrightarrow{F_M} M).\] Observe that a monoid is semiperfect precisely when the canonical map from its \emph{tilt} $M^\flat \to M$ is surjective. 

The following elementary observation is essential to us:

\begin{lemma} Let $R$ be a perfectoid ring and let $P$ be a perfect monoid. 
Then the $p$-complete monoid ring $R \langle P \rangle$ is a perfectoid ring. 
\end{lemma}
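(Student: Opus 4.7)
The plan is to verify directly the defining conditions of a perfectoid ring (in the sense of \cite{BMS19}) for $S := R\langle P\rangle$. First, $S$ is $p$-adically complete by construction, and has bounded $p^\infty$-torsion inherited from $R$ since $R[P]$ is free (hence flat) as an $R$-module. A distinguished element $\pi \in R$ with $\pi^p = pu$ for a unit $u \in R^\times$, furnished by the perfectoid structure on $R$, remains distinguished in $S$ via the structural map $R \to S$.

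Next I would verify surjectivity of the Frobenius on $S/p$. Using flatness of $R[P]$ over $R$, we have $S/p \cong (R/p)[P]$. Given an element $\sum_i a_i[p_i]$ of $(R/p)[P]$, the surjectivity of Frobenius on $R/p$ yields $b_i \in R/p$ with $b_i^p = a_i$, while perfectness of $P$ supplies $q_i \in P$ with $q_i^p = p_i$. The multinomial expansion then gives
\[
\Bigl(\sum_i b_i[q_i]\Bigr)^p \equiv \sum_i b_i^p [q_i^p] = \sum_i a_i[p_i] \pmod{p},
\]
producing the desired preimage.

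The remaining and genuinely delicate condition is that $\ker(\theta_S\colon W(S^\flat) \to S)$ be principal. My approach would be to first identify the tilt as $S^\flat \cong R^\flat\langle P\rangle$. Because $P = P^\flat$ is perfect, the Frobenius-transition maps on $(R/p)[P]$ are componentwise in $P$, which lets one commute the inverse limit along Frobenius with the $p$-completed monoid ring functor. Granted this identification, the map $\theta_S$ arises from $\theta_R$ by $p$-completed base change along $(-)\langle P\rangle$, and flatness of $R[P]$ over $R$ ensures that any generator $\xi$ of $\ker\theta_R$ continues to generate $\ker\theta_S$. The tilt identification is the main technical step; all other conditions reduce to formal checks given the hypotheses on $R$ and $P$.
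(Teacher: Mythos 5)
Your verification of $p$-completeness, the distinguished element, and Frobenius-surjectivity on $S/p \cong (R/p)[P]$ is fine (and since you are in characteristic $p$ there are no cross terms, so the multinomial expansion is not actually needed). The gap is the tilt identification $S^\flat \cong R^\flat\langle P\rangle$. As $R^\flat$ has characteristic $p$, the right-hand side is just the ordinary monoid ring $R^\flat[P]$, and for infinite $P$ this is \emph{not} the tilt: $S^\flat = \lim_\phi (R/p)[P]$ contains classes of unbounded support. Concretely, take $R$ the ring of integers of $\mathbb{C}_p$ and $P = \frac{1}{p}\N$, and consider $x_n = [0] + \sum_{k=1}^{n} \bar p^{1/p^{n-k+1}}\,[k/p^n] \in (R/p)[P]$. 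One checks $x_{n+1}^p = x_n$ (the new top term picks up $(\bar p^{1/p})^p = \bar p = 0$ as coefficient), so $(x_n)_n$ lies in $S^\flat$; but for every $k\ge 1$ the induced ``coefficient of $[k]$'' is a nonzero element of $R^\flat$ (its $k$-th coordinate is $\bar p^{1/p}\neq 0$), so $(x_n)_n$ is not in the image of $R^\flat[P]$. In fact $S^\flat$ is the $\bar\xi$-adic completion of $R^\flat[P]$, where $\bar\xi\in R^\flat$ is the reduction of a generator $\xi$ of $\ker\theta_R$, and $R^\flat[P]$ is in general not already $\bar\xi$-complete. Correspondingly $W(S^\flat)$ is the $(p,\xi)$-completion of $W(R^\flat)[P]$, not its $p$-completion $W(R^\flat)\langle P\rangle$, so the ``$p$-completed base change'' description of $\theta_S$ also needs to be corrected.

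This is exactly the subtlety that the paper's proof bypasses. Rather than compute $S^\flat$, it uses the equivalence of perfectoid rings with perfect prisms \cite[Theorem 3.10]{BS22}: starting from the perfect prism $(A_{\rm inf}(R^\flat),(d))$ with $A_{\rm inf}(R^\flat)/(d)\cong R$, one forms the $(p,d)$-complete $\delta$-ring $A_{\rm inf}(R^\flat)\,\widehat{\otimes}_{\Z_p}\Z_p\langle P\rangle$; the Frobenius stays bijective because $R^\flat$ and $P$ are perfect, and the $d$-quotient is $R\langle P\rangle$, so this is again a perfect prism. The extra completion that your argument is missing is built into the formation of the $(p,d)$-complete base change, which is why the prism route works with no Witt-vector bookkeeping.
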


\begin{proof} We make use of the equivalence between the category of perfect prisms and the category of perfectoid rings provided by \cite[Theorem 3.10]{BS22}. 
Let $(A_{\rm inf}(R^\flat), (d))$ be the perfect prism with $A_{\rm inf}(R^\flat)/(d) \cong R$. We claim that this gives rise to a  perfect prism $(A_{\rm inf}(R^\flat) \widehat{\otimes}_{{\Bbb Z}_p} {\Bbb Z}_p \langle P \rangle, (d))$. Indeed, the Frobenius on the $(p,d)$-complete $\delta$-ring $A_{\rm inf}(R^\flat) \widehat{\otimes}_{{\Bbb Z}_p} {\Bbb Z}_p \langle P \rangle$ remains bijective since $P$ is perfect, and its mod $d$ reduction is $R \widehat{\otimes}_{{\Bbb Z}_p} {\Bbb Z}_p \langle P \rangle =: R\langle P \rangle$, as desired. 
\end{proof}

\subsection{Quasisyntomic pre-log rings}\label{ssec:QSynlog} The following are generalizations of the notions of quasismooth and quasisyntomic ring maps to the logarithmic setting. Recall that we assume all rings to be $p$-complete. We say that a pre-log ring $(A, M)$ with $M$ integral is \emph{log quasismooth} if $A$ has bounded $p^\infty$-torsion and ${\Bbb L}_{(A, M) / {
\Bbb Z}_p}$ is $p$-completely flat, while $(A, M)$ is \emph{log quasisyntomic} if $A$ has bounded $p^\infty$-torsion and ${\Bbb L}_{(A, M) / {\Bbb Z}_p}$ has $p$-complete ${\rm Tor}$-amplitude in $[-1, 0]$. 
\begin{definition}[{Cfr. \cite[Definition 4.10(2, 3)]{BMS19}}] Let $(f, f^\flat) \colon (A, M) \to (B, N)$ be an integral map of integral pre-log rings whose underlying rings have bounded $p^\infty$-torsion. 
\begin{enumerate}
\item We say that $(f, f^\flat)$ is \emph{log quasismooth} if 
\begin{itemize}
\item[(i)] $A \to B$ is $p$-completely flat, and
\item[(ii)] ${\Bbb L}_{(B, N) / (A, M)}$ is $p$-completely flat.
\end{itemize}
If, in addition, the underlying map $f \colon A \to B$ of commutative rings is $p$-completely faithfully flat, we say that $(f, f^\flat)$ is a \emph{log quasismooth cover}.
\item We say that $(f, f^\flat)$ is \emph{log quasisyntomic} if 
\begin{itemize}
\item[(i)] $A \to B$ is $p$-completely flat, and
\item[(ii)]  ${\Bbb L}_{(B, N) / (A, M)}$ has $p$-complete ${\rm Tor}$-amplitude in $[-1, 0]$. 
\end{itemize}
If, in addition, the underlying map $f \colon A \to B$ of commutative rings is $p$-completely faithfully flat, we say that $(f, f^\flat)$ is a \emph{log quasisyntomic cover}.
\end{enumerate}
\end{definition}

\begin{remark}
    
The transitivity sequence of the Gabber cotangent complex implies that log quasismooth and log quasisyntomic are properties that are closed under composition. As the composition of faithfully maps are faithfully flat, 
the same goes for log quasismooth and quasisyntomic covers. 

Similarly, the flat base change property of the Gabber cotangent complex implies that these classes of maps are closed under base change; here we use that the completed derived base change along a $p$-completely flat map coincides with the ordinary completed base change as soon as all rings in question have bounded $p^\infty$-torsion, as observed in the proof of \cite[Lemma 4.16(2)]{BMS19}. 
\end{remark}
\subsection{Quasiregular semiperfectoid pre-log rings}\label{ssec:QRSPlog} We shall use the following notion of quasiregular semiperfectoid in the log setting: 

\begin{definition}\label{def:logqrsperfd} Let $(S, Q)$ be an integral pre-log ring. We say that $(S, Q)$ is \emph{log quasiregular semiperfectoid} if the following conditions are satisfied: 

\begin{enumerate}
\item the pre-log ring $(S, Q)$ is log quasisyntomic;
\item $S$ admits a map from a perfectoid ring;
\item $S/pS$ and $Q$ are semiperfect. 
\end{enumerate}
We shall write ${\rm lQRSPerfd}$ for the resulting category. 
\end{definition}
\begin{remark}
Throughout the rest of the paper, we shall freely use the following elementary observation to pass between monoid rings of the form $R[P]$ and their $p$-complete variants $R \langle P \rangle$:
If $R$ is $p$-complete of bounded $p^\infty$-torsion, then the monoid ring $R[P]$ is also of bounded $p^\infty$-torsion, as it is free over $R$. 
Consequently, the cotangent complex ${\Bbb L}_{R\langle P \rangle / R[P]}$ vanishes after derived $p$-completion, by derived reduction mod $p$, derived base change, and derived Nakayama. 
\end{remark}

\begin{remark}\label{rem:logperfectoid} We can always arrange for a log quasiregular semiperfectoid ring to receive a surjection from a ``pre-log perfectoid ring'' with underlying monoid the tilt $Q^\flat$, e.g., by replacing the surjection $R \to S$ by $R\langle Q^\flat \rangle \to S$. 
\end{remark}

Recall from \cite[Remark 4.21]{BMS19} that if $S$ is quasiregular semiperfectoid, then the cotangent complex $\L_{S/\Z_p}$ has $p$-complete Tor amplitude in degree $-1$. The same holds for their log counterparts: 

\begin{lemma}
\label{syntomic.13}
Let $(S, Q)$ be in ${\rm lQRSPerfd}$.
Then the Gabber cotangent complex $\L_{(S,Q)/\Z_p}$ has $p$-complete Tor amplitude in degree $-1$.
\end{lemma}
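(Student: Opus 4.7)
The plan is to upgrade the \emph{a priori} $p$-complete Tor amplitude of $[-1,0]$ guaranteed by the log quasisyntomic hypothesis built into Definition \ref{def:logqrsperfd} to the sharper bound concentrated in degree $-1$. Since the upper half of the amplitude is already in hand, the only remaining task is to show that the $\pi_0$ of the mod $p$ reduction $\mathbb{L}_{(S, Q)/\mathbb{Z}_p} \otimes^L_{\mathbb{Z}_p} \mathbb{F}_p$ vanishes, as this forces the Tor amplitude to lie in degree $-1$ only.

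The first step is to apply the base change property \ref{subsec:cotcx}.\eqref{eq:base change} of the Gabber cotangent complex along $(\mathbb{Z}_p, \{1\}) \to (\mathbb{F}_p, \{1\})$. Since $\mathbb{Z}_p \to S$ is $p$-completely flat, $S$ is $p$-torsion-free, so the (derived) pushout $(S, Q) \otimes^L_{(\mathbb{Z}_p, \{1\})} (\mathbb{F}_p, \{1\})$ is the discrete pre-log ring $(S/p, Q)$; base change then identifies the mod $p$ reduction in question with $\mathbb{L}_{(S/p, Q)/\mathbb{F}_p}$. Its $\pi_0$ is the classical module of relative log differentials $\Omega^1_{(S/p, Q)/\mathbb{F}_p}$, so the whole problem reduces to showing that this module vanishes.

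For this I would invoke the explicit presentation from Section \ref{subsec:cotcx}, exhibiting $\Omega^1_{(S/p, Q)/\mathbb{F}_p}$ as a quotient of $\Omega^1_{(S/p)/\mathbb{F}_p} \oplus (S/p) \otimes_{\mathbb{Z}} Q^{\mathrm{gp}}$, and argue that both summands vanish independently. The first summand vanishes because $S/p$ is semiperfect: any element can be written as $t^p$, so $ds = d(t^p) = pt^{p-1}dt = 0$. For the second summand, semiperfectness of $Q$ forces multiplication by $p$ to be surjective on $Q^{\mathrm{gp}}$, hence $Q^{\mathrm{gp}}$ is $p$-divisible as an abelian group; combined with $p = 0$ in $S/p$, every pure tensor rewrites as $s \otimes g = s \otimes pg' = ps \otimes g' = 0$.

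I do not anticipate any serious obstacle, as the argument is a direct logarithmic transcription of the non-logarithmic statement \cite[Remark 4.21]{BMS19}. The only subtlety worth emphasizing is that the extra summand $(S/p) \otimes_{\mathbb{Z}} Q^{\mathrm{gp}}$ arising from the log structure is precisely what forces the semiperfectness assumption on the monoid $Q$ in Definition \ref{def:logqrsperfd}; without that hypothesis, this summand would typically contribute nontrivially to $\pi_0$ and the Tor amplitude would genuinely extend into degree $0$.
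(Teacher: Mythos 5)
Your proof is correct and takes essentially the same approach as the paper's: both reduce, via the a priori $[-1,0]$ Tor amplitude built into the log quasisyntomic condition, to showing that $\pi_0$ of the mod $p$ reduction of $\L_{(S,Q)/\Z_p}$ vanishes, and both establish this by using semiperfectness of $S/pS$ to kill the classical differentials and semiperfectness of $Q$ to kill the $\mathrm{dlog}$ contribution. The only cosmetic difference is that the paper phrases this as surjectivity of multiplication by $p$ on $\Omega^1_{(S,Q)/\Z_p}$, whereas you first base change along $(\Z_p,\{1\})\to(\F_p,\{1\})$ (using $p$-torsion-freeness of $S$) and compute $\Omega^1_{(S/p,Q)/\F_p}=0$ from the explicit quotient presentation; the two formulations are equivalent.
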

\begin{proof}
For every $x\in S$,
we have $x=y^p+pz$ for some $y,z\in S$ since $S/pS$ is semiperfect,
which implies that $dx=pdy^{p-1}+pdz$, that is, $\Omega^1_{(S/pS)/\mathbb{F}_p} =0$. 
Moreover, for every $q\in Q$
we have $q=pr$ for some $r\in Q$ since $Q$ is semiperfect,
which implies that $d\log q = pd\log r$.
Hence the multiplication map
\[
\cdot p
\colon
\Omega_{(S,Q)/\Z_p}^1
\to
\Omega_{(S,Q)/\Z_p}^1
\]
is surjective.
Since $\Omega_{(S,Q)/\Z_p}^1\cong \pi_0(\L_{(S,Q)/\Z_p})$,
we obtain by base change. 
\[
\pi_0(\L_{(S,Q)/\Z_p}\otimes_S S/pS)
\cong
0.
\]
Since $(S, Q)$ is log quasisyntomic by condition (1), we find that ${\Bbb L}_{(S, Q)/{\Bbb Z}_p}$ has $p$-complete ${\rm Tor}$-amplitude in degree $-1$, as desired. 
\end{proof}

\begin{lemma}
\label{syntomic.8}
Let $(S,Q)$ be in ${\rm lQRSPerfd}$,
and let $R \to S$ be a map with $R$ a perfectoid ring. 
then $\L_{(S,Q)/R}$ has $p$-complete Tor amplitude in degree $-1$.
\end{lemma}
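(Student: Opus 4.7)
The plan is to use transitivity of the Gabber cotangent complex for the tower $(\Z_p, \{1\}) \to (R, \{1\}) \to (S, Q)$, giving a cofiber sequence
\[
S \otimes^{L}_R \L_{R/\Z_p} \longrightarrow \L_{(S,Q)/\Z_p} \longrightarrow \L_{(S,Q)/R}.
\]
Since $R$ is perfectoid, the first term has $p$-complete Tor amplitude in degree $-1$ (a well-known property of perfectoid rings; it also fits the template of Lemma \ref{syntomic.13} applied to $R$ with trivial log structure). By Lemma \ref{syntomic.13} applied to $(S,Q)$ itself, so does the middle term. Consequently, the cofiber $\L_{(S,Q)/R}$ has $p$-complete Tor amplitude in $[-1, 0]$.

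To upgrade this to degree $\{-1\}$, the key step is to show that $\Omega^1_{(S,Q)/R}$ is $p$-divisible, following the argument in Lemma \ref{syntomic.13}: semiperfectness of $S/pS$ allows us to write any $x \in S$ as $y^p + pz$ so that $dx = p(y^{p-1}dy + dz) \in p\,\Omega^1_{(S,Q)/R}$, and semiperfectness of $Q$ allows us to write any $q \in Q$ as $r^p$ so that $d\log q = p\,d\log r \in p\,\Omega^1_{(S,Q)/R}$. Since $\Omega^1_{(S,Q)/R}$ is generated by such elements, it is killed by reducing modulo $p$, and hence $\Omega^1_{(S,Q)/R} \otimes_S N = 0$ for every $S/p$-module $N$.

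To conclude, I would feed this vanishing into the long exact sequence obtained from the cofiber sequence above after derived base change along an arbitrary $S/p$-module $N$. The outer terms become shifts (by $-1$) of flat $S/p$-modules, and the vanishing of $\pi_0(\L_{(S,Q)/R} \otimes^L_S N) \cong \Omega^1_{(S,Q)/R} \otimes_S N$ (using connectivity of $\L_{(S,Q)/R}$) identifies the induced map on $\pi_{-1}$ as universally injective. The cokernel of a universally injective map between flat modules is itself flat, so $\pi_{-1}(\L_{(S,Q)/R} \otimes^L_S S/p)$ is a flat $S/p$-module, giving the claimed $\{-1\}$ Tor amplitude. The main subtlety is this last bookkeeping, in which the $\pi_0$-vanishing is used to force simultaneously the concentration in degree $-1$ \emph{and} the flatness of $\pi_{-1}$; everything else is a direct adaptation of the ingredients already assembled in the proof of Lemma \ref{syntomic.13}.
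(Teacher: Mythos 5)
There is a genuine gap, and it stems from reading the cofiber sequence in the wrong direction. You correctly write down the transitivity sequence
\[
S \otimes^{L}_R \L_{R/\Z_p} \longrightarrow \L_{(S,Q)/\Z_p} \longrightarrow \L_{(S,Q)/R}
\]
and observe that the first two terms have $p$-complete Tor-amplitude concentrated in (cohomological) degree $-1$. But the \emph{cofiber} of a map between objects concentrated in degree $-1$ has Tor-amplitude in $[-2,-1]$, not $[-1,0]$. (Had $\L_{(S,Q)/R}$ been the \emph{fiber}, your bound would be right --- but it is the cofiber.) So the assertion that ``the cofiber $\L_{(S,Q)/R}$ has $p$-complete Tor amplitude in $[-1, 0]$'' is unjustified, and with it your framing that the remaining task is to kill $\pi^0$.

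Because of this, the $p$-divisibility computation showing $\Omega^1_{(S,Q)/R}/p = 0$ --- although correct --- has no purchase here. Indeed, the vanishing $\pi^0\!\left(\L_{(S,Q)/R}\otimes^L_S N\right) = 0$ is already automatic from the long exact sequence: it sits between $\pi^0\!\left(\L_{(S,Q)/\Z_p}\otimes^L_S N\right)$ and $\pi^1\!\left(S\otimes^L_R\L_{R/\Z_p}\otimes^L_S N\right)$, both of which vanish precisely because those two terms are concentrated in degree $-1$. What must actually be killed is $\pi^{-2}\!\left(\L_{(S,Q)/R}\otimes^L_S N\right)$, which the long exact sequence identifies with the kernel of $\pi^{-1}\!\left(S\otimes^L_R\L_{R/\Z_p}\otimes^L_S N\right) \to \pi^{-1}\!\left(\L_{(S,Q)/\Z_p}\otimes^L_S N\right)$. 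So the actual content of the lemma is that the first map in your cofiber sequence induces a \emph{universally injective} (``pure'') map on $\pi^{-1}$ after reduction mod $p$, and nothing in your argument addresses this. The paper's proof is entirely devoted to exactly that: it invokes the purity criterion of \cite[Lemma 4.26]{BMS19} to reduce to checking injectivity after base change along maps $S/pS \to k$ with $k$ a perfect field, and then shows the map is an isomorphism in that case by proving $\L_{(k,Q_\perf)/R}\simeq 0$ via Lemma \ref{lem:perfectvanishing}. That reduction and vanishing computation are essential and entirely absent from your proposal.
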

\begin{proof}
From the transitivity sequence for $\Z_p\to R\to (S,Q)$, we obtain after (derived) base change to $S/pS$ a cofiber sequence
\[
\L_{R/\Z_p} \otimes_R S/pS
\xrightarrow{\alpha_{(S,Q)}}
\L_{(S,Q)/\Z_p} \otimes_S S/pS
\to
\L_{(S,Q)/R} \otimes_S S/pS
\]
The first two terms have $p$-complete Tor amplitude in degree $-1$ by \cite[Proposition 4.19(2)]{BMS19} and Lemma \ref{syntomic.13}. This immediately implies that $\pi_i(\L_{(S,Q)/R} \otimes_S S/pS\otimes_{S/pS} M) =0$ for any discrete $S/pS$-module $M$, $i \geq 3$ and $i \leq 0$. We are then left to show that the same holds for $\pi_2$, that is, it remains to show that $\beta_{(S,Q)}:=\pi_1(\alpha_{(S,Q)})$ is pure in the sense that it is injective after tensoring with any discrete $S/pS$-module (we borrow the terminology from \cite[Lemma 4.25]{BMS19}). The term $\L_{R/\Z_p}$ is the ordinary (non-log) cotangent complex, and after $p$-completion, 
it coincides with $\ker(\theta_R)/\ker(\theta_R)^2 = R[1]$ by \cite[Example 4.24]{BMS19}. Thus
we can describe $\beta_{(S,Q)}$ as
\[
\ker(\theta_R)/\ker(\theta_R)^2\otimes_R S/pS
\to
\pi_1(\L_{(S,Q)/\Z_p}\otimes_S S/pS),
\]
and the source is a finite free $S/pS$-module with formation compatible with base changes in $S$.
By Lemma \ref{syntomic.13},
the target is a flat $S/pS$-module.
By the criterion provided by \cite[Lemma 4.26]{BMS19}  it is enough to show that $\beta_{(S,Q)}\otimes_{S/pS} k$ is injective for all perfect fields $k$ with a map $S/pS\to k$ (note that in loc. cit.\ there is no perfectness assumption on $k$, but we can reduce to this case).
By  functoriality,
$\beta_{(S,Q)}\otimes_{S/pS}k$ factors $\beta_{(k,Q)}$.
Furthermore,
$\beta_{(k,Q)}$ factors $\beta_{(k,Q_\perf)}$.
Hence it remains to show that $\beta_{(k,Q_{\perf})}$ is injective.

In this case,
$k$ is perfectoid.
By \cite[Lemma 3.14]{BMS19},
we have $\L_{k/R}\simeq 0$.
There is a cocartesian square
\[
\begin{tikzcd}
\L_{\Z[Q_\perf]/\Z}\otimes_{\Z[Q_\perf]} k\ar[d]\ar[r]&
\L_{k/R}\ar[d]
\\
\L_{(\Z[Q_\perf],Q_\perf)/\Z}\otimes_{\Z[Q_\perf]} k\ar[r]&
\L_{(k,Q_\perf)/R},
\end{tikzcd}
\]
see \cite[Eq.\ 3.3]{BLPO}.
Together with Lemma \ref{lem:perfectvanishing} below, we deduce that $\L_{(k,Q_\perf)/R}$ vanishes.
This means that $\beta_{(k,Q_\perf)}$ is an isomorphism.
\end{proof}

\begin{lemma}  
\label{lem:perfectvanishing}
Let $P$ be a perfect monoid.
Then the modules \[\L_{\Z[P]/\Z}\otimes_{\Bbb Z} \Z/p\Z \quad \text{and} \quad \L_{(\Z[P],P)/\Z}\otimes_{\Bbb Z} \Z/p\Z\] vanish.
\end{lemma}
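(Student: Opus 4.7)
The plan is to treat the two vanishing statements separately, using property (4) of Section \ref{subsec:cotcx} to reduce the logarithmic case to the non-logarithmic one. Both will rely on the perfectness of $P$ in two essentially different ways: once as a statement about the monoid ring being a perfect $\F_p$-algebra, and once as a $p$-divisibility statement on $P^{\gp}$.

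First, for $\L_{\Z[P]/\Z}\otimes_{\Z}\F_p$, I would apply flat base change along $\Z\to\F_p$ (legitimate because $\Z[P]$ is free, hence flat, over $\Z$) to identify this module with $\L_{\F_p[P]/\F_p}$. Since $P$ is perfect, the map $m\mapsto m^p$ is a bijection of $P$, and so the Frobenius endomorphism of $\F_p[P]$, which on basis elements is $[m]\mapsto [m]^p=[m^p]$, permutes the $\F_p$-basis $\{[m]\}_{m\in P}$ and is therefore an isomorphism. Thus $\F_p[P]$ is a perfect $\F_p$-algebra, and the standard vanishing of the cotangent complex of a perfect $\F_p$-algebra over $\F_p$ gives $\L_{\F_p[P]/\F_p}\simeq 0$.

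Second, for $\L_{(\Z[P],P)/\Z}\otimes_{\Z}\F_p$, I would specialize property (4) of Section \ref{subsec:cotcx} to the map $(\Z,\{1\})\to (\Z[P],P)$ (trivial pre-log structure on the base), obtaining a cofiber sequence of $\Z[P]$-modules
\[
\Z[P]\otimes_{\Z}P^{\gp}\to \L_{(\Z[P],P)/\Z}\to \L_{\Z[P]/\Z}.
\]
The third term vanishes mod $p$ by the previous paragraph. For the first term, the hypothesis that $P$ is perfect implies that the $p$-power map on $P^{\gp}$ (equivalently, multiplication by $p$ in additive notation) is a bijection, so $P^{\gp}$ is uniquely $p$-divisible, i.e.\ a $\Z[1/p]$-module. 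Hence $P^{\gp}\otimes_{\Z}\F_p=0$, making the first term vanish mod $p$ as well. The cofiber sequence then forces $\L_{(\Z[P],P)/\Z}\otimes_{\Z}\F_p\simeq 0$.

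The argument is quite short and there is no serious obstacle: the main subtlety is just remembering to invoke the correct cofiber sequence from Section \ref{subsec:cotcx}(4), which cleanly isolates the contribution of the monoid $P^{\gp}$ and reduces the log statement to the non-log one already handled by the perfectness of $\F_p[P]$.
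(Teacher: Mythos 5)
Your overall approach is correct and reaches the right conclusion, but it differs from the paper's proof on the logarithmic half, and it contains one small misreading of property~(4). The paper treats both statements uniformly: it base-changes to $\F_p$ and quotes \cite[Corollary~3.8 and Corollary~7.11]{Bha12} for the vanishing of $\L_{\F_p[P]/\F_p}$ and $\L_{(\F_p[P],P)/\F_p}$, respectively. You handle the non-log case in essentially the same way (spelling out why $\F_p[P]$ is a perfect $\F_p$-algebra, which is exactly what Bhatt's Corollary~3.8 records); for the log case you take a different, arguably cleaner route by isolating the $P^{\gp}$-contribution via property~(4) of Section \ref{subsec:cotcx} and using unique $p$-divisibility of $P^{\gp}$. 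This is a valid and attractive alternative to quoting Bhatt's Corollary~7.11.

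The one inaccuracy: when you specialize the cofiber sequence of \ref{subsec:cotcx}.\eqref{eq:logcotangent_vsnonlog} to $(R,P)=(\Z,\{1\})$ and $(A,M)=(\Z[P],P)$, the third term is $\L_{A/\,R\otimes_{\Z[P]}\Z[M]}=\L_{\Z[P]/\Z\otimes_{\Z}\Z[P]}=\L_{\Z[P]/\Z[P]}\simeq 0$, not $\L_{\Z[P]/\Z}$. (There is also no natural map $\L_{(\Z[P],P)/\Z}\to\L_{\Z[P]/\Z}$ in that direction; the natural comparison map goes the other way.) So the sequence actually collapses to the canonical identification $\L_{(\Z[P],P)/\Z}\cong\Z[P]\otimes_{\Z}P^{\gp}$ already stated in the first part of \ref{subsec:cotcx}.\eqref{eq:logcotangent_vsnonlog}, and the log statement follows from $p$-divisibility of $P^{\gp}$ alone, without needing the non-log vanishing at all. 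Your argument still reaches the right conclusion because the term you substituted also vanishes mod~$p$, but the cofiber sequence as written is not the one given by the cited property.
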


\begin{proof} This is essentially an observation taken from \cite{Bha12}. For the statement about the ordinary cotangent complex, the flat base change equivalence \[\L_{\Z[P]/\Z}\otimes_{\Bbb Z} \Z/p\Z = {\Bbb L}_{{\Bbb Z}[P]/{\Bbb Z}} \otimes_{{\Bbb Z}[P]} {\Bbb F}_p[P] \simeq {\Bbb L}_{{\Bbb F}_p[P]/{\Bbb F}_p}\] allows us to conclude by \cite[Corollary 3.8]{Bha12}. The proof for the statement about the Gabber cotangent complex is analogous, using instead \cite[Corollary 7.11]{Bha12}.
\end{proof}

In particular, we have:

\begin{corollary}\label{cor:perfectvanishing} Let $(R, P)$ be a pre-log ring with $R$ a perfectoid ring and $P$ a perfect monoid. Then the canonical map \[{\Bbb L}_{R / {\Bbb Z}_p} \xrightarrow{} {\Bbb L}_{(R, P) / {\Bbb Z}_p}\] is an equivalence after $p$-completion. 
\end{corollary}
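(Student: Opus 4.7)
The plan is to exploit the canonical pushout square of $R$-modules from property~(4) of Section~\ref{subsec:cotcx} (in the form displayed in the proof of Lemma~\ref{lem:repletebasechange}), instantiated with base $(\Z_p, \{1\})$ and target $(R, P)$. This yields the cocartesian square
\[
\begin{tikzcd}
R \otimes_{\Z[P]} \L_{\Z[P]/\Z} \arrow[r] \arrow[d] & R \otimes_\Z P^{\gp} \arrow[d] \\
\L_{R/\Z_p} \arrow[r] & \L_{(R,P)/\Z_p},
\end{tikzcd}
\]
in which the bottom horizontal arrow is the canonical map under investigation. Since derived $p$-completion preserves pushouts of connective modules, it suffices to show that both top corners vanish after derived $p$-completion.

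For the upper-left corner, Lemma~\ref{lem:perfectvanishing} gives $\L_{\Z[P]/\Z} \otimes_\Z \Z/p \simeq 0$, whence the derived tensor product $R \otimes_{\Z[P]} \L_{\Z[P]/\Z}$ also has trivial mod-$p$ reduction. As this is a connective $R$-module, derived Nakayama forces its derived $p$-completion to vanish. For the upper-right corner, perfectness of $P$ makes the $p$-th power map on $P$, and hence on its group completion $P^{\gp}$, bijective, so that $P^{\gp}$ is naturally a $\Z[1/p]$-module. Consequently $R \otimes_\Z P^{\gp}$ is $\Z[1/p]$-linear, and its derived $p$-completion vanishes.

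Given the preparatory results already at our disposal, I do not expect any genuine obstacle here. The argument reduces to the observation that the two top vertices of the Gabber pushout square are $p$-locally trivial under our perfectness hypotheses, with the only technical point being the standard fact that derived $p$-completion preserves pushouts of connective modules.
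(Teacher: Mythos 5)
Your proposal is correct and takes essentially the same approach as the paper: both arguments rest on the pushout square from property (4) of the Gabber cotangent complex together with Lemma~\ref{lem:perfectvanishing}. The only difference is cosmetic — the paper first reduces via the transitivity sequence to showing $\L_{(R,P)/R}$ vanishes $p$-completely and then applies the pushout square with base $R$, while you apply the pushout square directly to $\L_{(R,P)/\Z_p}$ and handle the upper-right corner by unfolding the fact that $P^{\gp}$ is uniquely $p$-divisible rather than quoting the log half of Lemma~\ref{lem:perfectvanishing}.
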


\begin{proof} By the transitivity sequence for $\Z_p\to R\to (R, P)$, it suffices to prove that ${\Bbb L}_{(R, P) / R}$ vanishes after $p$-completion. This follows from Lemma \ref{lem:perfectvanishing} and the description of ${\Bbb L}_{(R, P) / R}$ as the pushout of the diagram \[{\Bbb L}_{R / R} \xleftarrow{} R \otimes_{{\Bbb Z}_p[P]} {\Bbb L}_{{\Bbb Z}_p[P] / {\Bbb Z}_p} \xrightarrow{} R \otimes_{{\Bbb Z}[P]} {\Bbb L}_{({\Bbb Z}_p[P], P) / {\Bbb Z}_p}\] of $R$-modules. 
\end{proof}

\begin{remark}\label{rem:perfectoidtrivial} In light of Corollary \ref{cor:perfectvanishing}, we shall typically consider perfectoid rings $R$ with trivial log structure. 
\end{remark}

As pointed out in Remark \ref{rem:logperfectoid}, if $(S, Q)$ is a log quasiregular semiperfectoid ring, there is a surjection to $S$ from a perfectoid ring of the form $R\langle Q^\flat \rangle$. Let $Q^{\flat, \rm rep} \to Q$ denote the repletion of the surjection $Q^\flat \to Q$. The following observation will be crucial in the sequel:

\begin{proposition}\label{prop:qrqsrep} With notation as in the previous paragraph, there is a canonical equivalence \[{\Bbb L}_{(S, Q) / R\langle Q^\flat \rangle} \simeq {\Bbb L}_{S / R\langle Q^{\flat, \rm rep} \rangle}\] of $S$-modules after $p$-completion. 
\end{proposition}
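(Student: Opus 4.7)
The plan is to factor the structure map $(R\langle Q^\flat\rangle, Q^\flat) \to (S, Q)$ through the repletion $(R\langle Q^{\flat, \rm rep}\rangle, Q^{\flat, \rm rep}) \to (S, Q)$, and then exploit strictness of the second arrow together with Lemma \ref{lem:repletebasechange}. Note that since $Q$ is semiperfect, the tilt map $Q^\flat \to Q$ is surjective, so $Q^{\flat,\rm gp} \to Q^{\rm gp}$ is surjective; this is what allows us to form the repletion. By construction $Q^{\flat, \rm rep} \to Q$ is exact, and since all monoids in sight are integral, the discussion following the definition of repletion in Section \ref{subsec:repletion} shows that $Q^{\flat, \rm rep} \to Q$ is in fact strict. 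Therefore, by property (2) of the Gabber cotangent complex recorded in Section \ref{subsec:cotcx}, strictness yields the identification
\[{\Bbb L}_{(S,Q)/(R\langle Q^{\flat, \rm rep}\rangle, Q^{\flat, \rm rep})} \simeq {\Bbb L}_{S / R\langle Q^{\flat, \rm rep}\rangle},\]
where on the right we interpret $R\langle Q^{\flat,\rm rep}\rangle$ as equipped with its trivial pre-log structure.

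Next, I would apply the transitivity sequence (property (5) of Section \ref{subsec:cotcx}) to the factorization $(R\langle Q^\flat\rangle, Q^\flat) \to (R\langle Q^{\flat, \rm rep}\rangle, Q^{\flat, \rm rep}) \to (S, Q)$. Combined with the identification above, this reduces the proposition to the statement
\[{\Bbb L}_{(R\langle Q^{\flat, \rm rep}\rangle, Q^{\flat, \rm rep})/(R\langle Q^\flat\rangle, Q^\flat)} \simeq 0\]
after $p$-completion. For this, I would apply Lemma \ref{lem:repletebasechange} to $({\Bbb Z}_p, \{1\}) \to (R\langle Q^\flat\rangle, Q^\flat) \to (S, Q)$, using again that $Q^{\flat,\rm gp} \to Q^{\rm gp}$ is surjective, to obtain an equivalence
\[R\langle Q^{\flat, \rm rep}\rangle \otimes_{R\langle Q^\flat\rangle} {\Bbb L}_{(R\langle Q^\flat\rangle, Q^\flat)/{\Bbb Z}_p} \xrightarrow{\simeq} {\Bbb L}_{(R\langle Q^{\flat, \rm rep}\rangle, Q^{\flat, \rm rep})/{\Bbb Z}_p}.\]
Feeding this into the transitivity sequence for $({\Bbb Z}_p, \{1\}) \to (R\langle Q^\flat\rangle, Q^\flat) \to (R\langle Q^{\flat, \rm rep}\rangle, Q^{\flat, \rm rep})$ forces the desired vanishing of the relative cotangent complex.

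The main obstacle I anticipate is careful bookkeeping around $p$-completion: the replete base change $R\langle Q^{\flat, \rm rep}\rangle$ is only well-behaved after derived $p$-completion, so all cotangent complexes above must be understood in the $p$-completed sense, and one has to verify that the cited ingredients (in particular Lemma \ref{lem:repletebasechange}) interact properly with completion. Given the bounded $p^\infty$-torsion hypotheses built into the definition of $\mathrm{lQRSPerfd}$ and the remark at the start of Subsection \ref{ssec:QRSPlog}, this step should be essentially formal, but it is the only part of the argument that goes beyond direct application of the structural properties of the Gabber cotangent complex developed in Section \ref{subsec:cotcx}.
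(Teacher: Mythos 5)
Your proof follows essentially the same route as the paper's, but there is one genuine gap at the very first step. The proposition concerns $\mathbb{L}_{(S,Q)/R\langle Q^\flat\rangle}$, where $R\langle Q^\flat\rangle$ carries the \emph{trivial} pre-log structure, whereas your argument begins by factoring the map $(R\langle Q^\flat\rangle, Q^\flat) \to (S,Q)$, i.e.\ you silently equip the base with the pre-log structure $Q^\flat$. The equivalence
\[
\mathbb{L}_{(S,Q)/R\langle Q^\flat\rangle} \simeq \mathbb{L}_{(S,Q)/(R\langle Q^\flat\rangle, Q^\flat)}
\]
after $p$-completion is not automatic; it is precisely the content of Corollary \ref{cor:perfectvanishing} (applied to the perfectoid ring $R\langle Q^\flat\rangle$ with the perfect monoid $Q^\flat$, fed into the transitivity sequence for $R\langle Q^\flat\rangle \to (R\langle Q^\flat\rangle, Q^\flat) \to (S,Q)$). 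This is the first of the three equivalences in the paper's chain, and without it your argument does not start from the left-hand side in the statement.

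Once that step is supplied, the rest of your argument is correct and coincides with the paper's. The strictness of $Q^{\flat,\rm rep}\to Q$ (which, as you correctly note, follows from exactness plus the integrality of the monoids in question) gives $\mathbb{L}_{(S,Q)/(R\langle Q^{\flat,\rm rep}\rangle, Q^{\flat,\rm rep})} \simeq \mathbb{L}_{S/R\langle Q^{\flat,\rm rep}\rangle}$, and your use of Lemma \ref{lem:repletebasechange} over $({\Bbb Z}_p,\{1\})$ together with transitivity to force the vanishing of $\mathbb{L}_{(R\langle Q^{\flat,\rm rep}\rangle, Q^{\flat,\rm rep})/(R\langle Q^\flat\rangle, Q^\flat)}$ is a slightly more circuitous but equivalent way of invoking the $p$-complete variant of Lemma \ref{lem:repletebasechange} as the paper does for its middle equivalence. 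Your remarks about $p$-completion bookkeeping are reasonable and do not conceal any further difficulty.
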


We stress that the right-hand side of the equivalence is an ordinary, non-logarithmic cotangent complex. The reader already familiar with the material of \cite{BMS19} may wonder why Proposition \ref{prop:qrqsrep} does not imply that our resulting theory of Nygaard-complete log prismatic cohomology coincides with the ordinary, non-logarithmic variant. The point is that the ring $R\langle Q^{\flat, \rm rep} \rangle$ need not be a perfectoid ring (and it is not, in general: this happens when the map $Q^\flat\to Q$ is already exact). See also Remark \ref{rmk:comment_Qflat_to_Q_exact} below. 

\begin{proof}[Proof of Proposition \ref{prop:qrqsrep}] There are equivalences \[ {\Bbb L}_{(S, Q) / R\langle Q^\flat \rangle} \xrightarrow{\simeq} {\Bbb L}_{(S, Q) / (R\langle Q^\flat \rangle, Q^{\flat})} \xrightarrow{\simeq} {\Bbb L}_{(S, Q) / (R\langle Q^{\flat, \rm rep}\rangle, Q^{\flat, \rm rep})} \xleftarrow{\simeq} {\Bbb L}_{S / R\langle Q^{\flat, \rm rep}\rangle}\] after derived $p$-completion by Corollary \ref{cor:perfectvanishing}, the $p$-complete variant of replete base change (Lemma \ref{lem:repletebasechange}), and the fact that the resulting map $Q^{\flat, \rm rep} \to Q$ is strict. . 
\end{proof}

 \begin{example}\label{exa:qrsperfd} With a small abuse of notation, write $\frac{1}{p}\N$ for the perfect submonoid of $\Q_{\geq 0}$ consisting of all elements of the form $\frac{a}{p^n}$ for $a,n\geq0$.  Let $R$ be a perfectoid ring. Then the $p$-complete monoid ring $R\langle x^{1/p^{\infty}}, y^{1/p^{\infty}} \rangle$ obtained from the monoid ring ${\Bbb Z}[\frac{1}{p}{\Bbb N}\oplus \frac{1}{p}{\Bbb N}] \cong {\Bbb Z}[x^{1/p^{\infty}}, y^{1/p^\infty}]$ is still a perfectoid ring, and comes equipped with a canonical pre-log structure \begin{equation}\label{logperfectoid}\frac{1}{p}{\Bbb N} \oplus \frac{1}{p} {\Bbb N} \to (R\langle x^{1/p^\infty}, y^{1/p^{\infty}} \rangle, \cdot).\end{equation} 
 Consider now the pushout $\frac{1}{p}{\Bbb N} \oplus_{\Bbb N} \frac{1}{p}{\Bbb N}$. It is a natural example of a semiperfect monoid, and this participates in a pre-log structure with its monoid ring \begin{equation}\label{logqrqs}\frac{1}{p}{\Bbb N} \oplus_{\Bbb N} \frac{1}{p}{\Bbb N} \to (R\langle \frac{1}{p}{\Bbb N} \oplus_{\Bbb N} \frac{1}{p}{\Bbb N} \rangle, \cdot) \cong (R\langle x^{1/p^\infty}, y^{1/p^{\infty}} \rangle/(x - y), \cdot)\end{equation} which we claim to be a log quasiregular semiperfectoid ring. Let us write $(S, Q)$ for the pre-log ring determined by \eqref{logqrqs}, and $\langle x^{1/p^{\infty}} \rangle, \langle y^{1/p^{\infty}} \rangle$ for the two copies of $\frac{1}{p}{\Bbb N}$.

As conditions (2) and (3) in Definition \ref{def:logqrsperfd} are clear, it only remains to show that $(S, Q)$ is log quasisyntomic, that is, we want to show that ${\Bbb L}_{(S, Q)/{\Bbb Z}_p}$ has $p$-complete ${\rm Tor}$-amplitude in $[-1, 0]$. 

Using the transitivity sequence \[S \otimes_{R\langle x^{1/p^{\infty}} \rangle} {\Bbb L}_{R\langle x^{1/p^{\infty}} \rangle / {\Bbb Z}_p} \xrightarrow{} {\Bbb L}_{(S, Q) / {\Bbb Z}_p} \xrightarrow{} {\Bbb L}_{(S, Q) / R \langle x^{1/p^{\infty}} \rangle}\] and the fact that ${\Bbb L}_{R \langle x^{1/p^{\infty}} \rangle / {\Bbb Z}_p}$ has $p$-complete ${\rm Tor}$-amplitude in degree $-1$ (with derived $p$-completion $R\langle x^{1/p^{\infty}} \rangle [1]$, since $R\langle x^{1/p^{\infty}} \rangle$ a perfectoid ring), we find that it suffices to prove that ${\Bbb L}_{(S, Q) / R\langle x^{1/p^{\infty}} \rangle}$ has $p$-complete ${\rm Tor}$-amplitude in $[-1, 0]$ in order to draw the same conclusion about ${\Bbb L}_{(S, Q) / {\Bbb Z}_p}$.

For this, we observe that there are equivalences \[{\Bbb L}_{(S, Q) / R\langle x^{1/p^{\infty}}\rangle } \xrightarrow{\simeq} {\Bbb L}_{(S, Q) / (R\langle x^{1/p^{\infty}}\rangle, \langle x^{1/p^{\infty}} \rangle)} \xleftarrow{\simeq} S \otimes_{R\langle y^{1/p^{\infty}}\rangle} {\Bbb L}_{(R\langle y^{1/p^{\infty}}\rangle, \langle y^{1/p^{\infty}} \rangle) / (R\langle t \rangle, \langle t \rangle)}\] after $p$-completion. The first equivalence is due to   Corollary \ref{cor:perfectvanishing} (and the transitivity sequence), while the second is an application of base change for the Gabber cotangent complex since $(S, Q)$ can be realized as the pushout of the diagram \[(R\langle x^{1/p^{\infty}}\rangle, \langle x^{1/p^{\infty}} \rangle) \xleftarrow{} (R\langle t \rangle, \langle t \rangle) \xrightarrow{} (R\langle y^{1/p^{\infty}} \rangle, \langle y^{1/p^{\infty}} \rangle).\]

It thus suffices to prove that ${\Bbb L}_{(R \langle y^{1/p^{\infty}} \rangle, \langle y^{1/p^{\infty}} \rangle) / (R\langle t \rangle, \langle t \rangle)}$ has derived $p$-completion equivalent to $R\langle y^{1/p^{\infty}} \rangle[1]$. This can be seen using the equivalence \[{\Bbb L}_{(R\langle y^{1/p^{\infty}}\rangle, \langle y^{1/p^{\infty}} \rangle) / (R\langle t \rangle, \langle t \rangle)} \xrightarrow{\simeq} (R\langle y^{1/p^{\infty}} \rangle \otimes_{R\langle t \rangle} {\Bbb L}_{(R\langle t \rangle, \langle t \rangle) / R})[1]\] arising from the transitivity sequence of the Gabber cotangent complex associated to the composite  $R \to (R\langle t \rangle, \langle t \rangle) \to (R\langle x^{1/p^{\infty}}\rangle, \langle x^{1/p^{\infty}} \rangle)$, since ${\Bbb L}_{(R\langle t \rangle, \langle t \rangle) / R} = R\langle t \rangle\{{\rm dlog}(t)\}$. 
\end{example}

\begin{example}\label{exa:postrepletion} Let us explain the repletion procedure in the context of Example \ref{exa:qrsperfd}. There is a surjection \[(R\langle x^{1/p^{\infty}}, y^{1 / p^{\infty}} \rangle, \frac{1}{p} {\Bbb N} \oplus \frac{1}{p}{\Bbb N}) \to (R\langle x^{1/p^{\infty}}, y^{1 / p^{\infty}} \rangle/(x - y), \frac{1}{p} {\Bbb N} \oplus_{\Bbb N} \frac{1}{p}{\Bbb N})\] with perfectoid source and log quasiregular semiperfectoid target. Passing to the replete base change in this case, we obtain the surjection \begin{equation}\label{postrepletion}R\langle x^{1/p^{\infty}}, y^{1 / p^{\infty}}, x/y, y/x \rangle \to R\langle x^{1/p^{\infty}}, y^{1 / p^{\infty}}\rangle/(x - y)\end{equation} whose source is not a perfectoid ring. That the repletion takes this form can either be seen by explicitly carrying out the relevant computation or by using a very slight variant of \cite[Remark 2.17(2)]{BLPO} and the standard chart for the associated blow-up. 
\end{example}
\begin{remark}
    Proposition \ref{prop:qrqsrep}, and the related results Proposition \ref{prop:loghhvsordinaryhh}, Theorem \ref{thm:logthhoneparameter}, and Proposition \ref{prop:filterlogthhcotcx} show that something very concrete is gained by working in the logarithmic setup: Despite the source of \eqref{postrepletion} not being perfectoid, we have every bit as much of control of the associated cotangent complex and (topological) Hochschild homology as in the setup of \cite{BMS19}.
\end{remark}

\subsection{The site ${\rm lQRSPerfd}^{\rm op}$} Analogously to \cite[Lemma 4.27]{BMS19}, we have:

\begin{lemma} The category ${\rm lQRSPerfd}^{\rm op}$ with log quasisyntomic covers forms a site.  
\end{lemma}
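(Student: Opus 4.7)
The plan is to verify the three axioms of a Grothendieck pretopology on ${\rm lQRSPerfd}^{\rm op}$: identities are covers; log quasisyntomic covers are closed under composition; and they are stable under base change, where base change in ${\rm lQRSPerfd}^{\rm op}$ corresponds to a pushout in ${\rm lQRSPerfd}$.

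That identities are covers is immediate from Definition \ref{def:logqrsperfd}. Closure under composition of log quasisyntomic covers uses the transitivity cofiber sequence \ref{subsec:cotcx}.\eqref{eq:transitivity} of the Gabber cotangent complex (to preserve $p$-complete ${\rm Tor}$-amplitude in $[-1,0]$), together with the facts that composites of integral monoid maps are integral and that composites of $p$-completely faithfully flat ring maps are $p$-completely faithfully flat.

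The main task is stability under base change. Given a log quasisyntomic cover $(S,Q)\to(S',Q')$ and any map $(S,Q)\to(S'',Q'')$ in ${\rm lQRSPerfd}$, the candidate pushout is $(T,N)$ with $T=S'\widehat{\otimes}_S S''$ (the $p$-completed tensor product) and $N=\pi_0(Q'\oplus_Q Q'')$. Since the cover is integral as a map of integral pre-log rings, the monoid pushout $N$ is again integral and agrees with its derived counterpart by \eqref{pushtrunc}; since $S\to S'$ is $p$-completely flat, the $p$-completed derived tensor product of $S'$ and $S''$ over $S$ is concentrated in degree zero with bounded $p^\infty$-torsion. By flat base change for the Gabber cotangent complex \ref{subsec:cotcx}.\eqref{eq:base change} we have ${\Bbb L}_{(T,N)/(S'',Q'')}\simeq T\otimes_{S'}{\Bbb L}_{(S',Q')/(S,Q)}$, preserving $p$-complete ${\rm Tor}$-amplitude in $[-1,0]$, and base change preserves $p$-complete faithful flatness on the underlying rings; hence $(S'',Q'')\to(T,N)$ is a log quasisyntomic cover, and composing with $(S'',Q'')$ over $({\Bbb Z}_p,\{1\})$ shows that $(T,N)$ is log quasisyntomic.

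The remaining conditions for $(T,N)\in {\rm lQRSPerfd}$ are verified as follows. A perfectoid surjection onto $S''$ composes with $S''\to T$, and after enlarging this perfectoid to $R\langle N^{\flat}\rangle$ as in Remark \ref{rem:logperfectoid} one obtains a perfectoid surjection with compatible monoid data. For semiperfectness, $T/pT \cong (S'/pS')\otimes_{S/pS}(S''/pS'')$, and in characteristic $p$ the identity $\sum \alpha_i^p\otimes \beta_i^p=(\sum \alpha_i\otimes \beta_i)^p$ shows that the Frobenius on $T/pT$ remains surjective; similarly, the $p$-power map on $N$ is the pushout of the surjective $p$-power maps of $Q'$ and $Q''$, hence surjective. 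The principal obstacle is the monoid-theoretic bookkeeping---ensuring that $N$ stays integral and semiperfect under the pushout and that the chosen perfectoid surjection can be arranged to cover it---while the ring-theoretic steps are essentially those of \cite[Lemma 4.27]{BMS19}.
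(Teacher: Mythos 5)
Your proposal follows essentially the same route as the paper: reduce to stability under base change, form the $p$-completed pushout $(T,N)$, verify it remains log quasisyntomic via flat base change for the Gabber cotangent complex, and check the map from a perfectoid and the semiperfectness of both $T/pT$ and $N$ exactly as the paper does. The only cosmetic overreach is the appeal to Remark \ref{rem:logperfectoid} to upgrade the perfectoid map on $T$ to a surjection with compatible monoid data — Definition \ref{def:logqrsperfd}(2) asks only for a map, and invoking that remark before knowing $(T,N)\in\mathrm{lQRSPerfd}$ would be circular, but since this step is not needed it does not affect the correctness of the argument.
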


\begin{proof}
We only need to show that the class of log quasisyntomic covers is closed under pullbacks in $\lQRSPerfd^{\rm op}$.
For this,
let $(A,M)\to (B,N)$ be a log quasisyntomic cover in $\lQRSPerfd$, and let $(A,M)\to (A',M')$ be a map in $\lQRSPerfd$.
Then
the $p$-completed pushout $(A',M')\to (B',N'):=(A'\widehat{\otimes}_A B,M'\oplus_M N)$ is a log quasisyntomic cover, and we only need to show $(B',N')\in \lQRSPerfd$.

Since $(A',M')$ is log quasisyntomic,
then so is $(B',N')$. Moreover, $B'$ receives a map from a perfectoid ring (since $A'$ does).
Since $B'/pB'\cong B/pB\otimes_{A/pA} A'/pA'$,
$B'/pB'$ is semiperfect.
Furthermore,
$N'$ is semiperfect since it is a quotient of the semiperfect monoid $N\oplus M'$.
Hence $(B',N')\in \lQRSPerfd$, which concludes the proof. 
\end{proof}

\subsection{Existence of log quasisyntomic covers} We now aim to prove that every log quasisyntomic $(A, M)$ admits a cover by a log quasiregular semiperfectoid ring: 

\begin{proposition}\label{prop:logquasicover} Let $(A, M)$ be in ${\rm lQSyn}$. Then there exists a log quasisyntomic cover $(A, M) \to (S, Q)$, where $(S, Q)$ is log quasiregular semiperfectoid ring. 
\end{proposition}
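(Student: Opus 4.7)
The plan is to adapt the non-logarithmic construction of \cite[Lemma 4.28]{BMS19} by composing two log quasisyntomic covers: the first replaces the monoid $M$ by its perfection $M_\perf$, and the second, which is strict, passes to a quotient of a perfectoid on the ring level.

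\emph{Construction.} Set $Q := M_\perf$, the direct-limit perfection of $M$, a perfect (hence semiperfect) monoid receiving a canonical map $M \to Q$. Form the $p$-completed derived pushout of pre-log rings
\[(A', Q) := (A, M) \otimes^{L}_{(\Z_p[M], M)} (\Z_p\langle Q \rangle, Q),\]
i.e., $A' = A \, \widehat{\otimes}^{L}_{\Z_p[M]} \, \Z_p\langle Q \rangle$ with the evident pre-log structure. Next, choose a surjection $\Z_p[x_i]_{i \in I} \twoheadrightarrow A'$ (e.g., $I = A'$) and put
\[S := A' \, \widehat{\otimes}^{L}_{\Z_p[x_i]} \, \Z_p\langle x_i^{1/p^\infty} \rangle,\]
endowed with the pre-log structure inherited from $(A', Q)$. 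The candidate cover is $(A, M) \to (S, Q)$.

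\emph{Verification.} For the first map $(A, M) \to (A', Q)$, the perfection $M \to Q$ is a filtered colimit of Frobenius endomorphisms of $M$; for integral $M$ these are integral, so $M \to Q$ is integral and flat in the sense of Definition \ref{def:logff}. Consequently $\Z_p[M] \to \Z_p\langle Q \rangle$ is $p$-completely faithfully flat, $A'$ is discrete, and $A \to A'$ is $p$-completely faithfully flat by base change. Combining the base change property \ref{subsec:cotcx}.(\ref{eq:base change}) with the cofiber sequence of \ref{subsec:cotcx}.(\ref{eq:logcotangent_vsnonlog}) identifies
\[\L_{(A', Q) / (A, M)} \simeq A' \otimes_{\Z} (Q^{\gp}/M^{\gp}),\]
a flat $A'$-module in degree $0$; in particular it has $p$-complete Tor-amplitude in $[-1, 0]$. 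The second map $(A', Q) \to (S, Q)$ is strict, and by property (2) of Section \ref{subsec:cotcx} the associated log cotangent complex coincides with the non-logarithmic one, so the verification that $(A', Q) \to (S, Q)$ is a log quasisyntomic cover reduces entirely to \cite[Lemma 4.28]{BMS19}. The composition $(A, M) \to (S, Q)$ is then a log quasisyntomic cover; moreover $(S, Q) \in \lQRSPerfd$, because the ring $R := \Z_p\langle Q \rangle \langle x_i^{1/p^\infty} \rangle$ is perfectoid (since $Q$ is perfect) and surjects onto $S$, so that $S/pS$ is semiperfect as a quotient of $R/pR$, and $Q = M_\perf$ is perfect.

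\emph{Main obstacle.} The main technical step is to establish the integrality and flatness (in the sense of Definition \ref{def:logff}) of the monoid perfection $M \to M_\perf$: without it, the derived pushout defining $A'$ may fail to be concentrated in degree zero, and the identification of $\L_{(A', Q)/(A, M)}$ above would need revision. This reduces to the stability of integral monoid maps under filtered colimits and to the integrality of a single Frobenius endomorphism of an integral monoid.
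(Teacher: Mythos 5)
Your construction takes a genuinely different route from the paper's, and it has a real gap. You perfect $M$ directly, setting $Q := M_{\perf}$, and your argument hinges on the claim that the Frobenius $F_M\colon M \to M$ is integral (and hence Bhatt-flat) for any integral monoid $M$, so that $M \to M_\perf$ is a filtered colimit of integral maps. This claim is false. Take $M = \{(a, b) \in \N^2 : a + b \text{ even}\}$, the $A_1$-monoid generated by $(2,0), (1,1), (0,2)$, and $p = 2$. Putting $q_1 = (2,0)$, $q_2 = (0,2)$, $p_1 = (1,1)$, $p_2 = (2,0)$ gives $q_1 + 2p_1 = q_2 + 2p_2 = (4,2)$, but there are no $q, p_3, p_4 \in M$ with $q_1 = q + 2p_3$, $q_2 = q + 2p_4$ and $p_1 + p_4 = p_2 + p_3$: the first relation forces $p_3 = 0$ and $q = (2,0)$, and then the second has no solution. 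So $F_M$ fails Ogus's criterion \cite[Proposition~I.4.6.3]{Ogu18} and is not integral; monoids $M$ of this sort certainly occur among the underlying monoids of log quasisyntomic pre-log rings (e.g.\ $A = R[M]^\wedge_p$ with the tautological pre-log structure). Without integrality/flatness of $M \to M_\perf$, the derived pushout defining $A'$ need not be discrete, $\Z_p[M] \to \Z_p\langle Q\rangle$ need not be flat, and the identification $\L_{(A',Q)/(A,M)} \simeq A' \otimes_\Z (Q^\gp/M^\gp)$ breaks down -- exactly the ``main obstacle'' you flag, except that the reduction you propose (to integrality of a single Frobenius) does not go through.

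The paper's proof is designed precisely to sidestep this: instead of perfecting $M$, it chooses a surjection from a \emph{free} monoid $P = \N^J \twoheadrightarrow M$ and perfects $P$. The Frobenius of $\N^J$ \emph{is} integral (it corresponds to the finite free extension $\Z[t_j] \subset \Z[t_j^{1/p}]$ on each factor), and the resulting monoid is $Q := \N_\perf^J \oplus_{\N^J} M$, which is semiperfect as a quotient of the semiperfect monoid $\N_\perf^J \oplus M$, even though it need not be perfect (and Definition~\ref{def:logqrsperfd} only requires semiperfectness). Everything else in your verification -- the ring-level step via $\Z_p\langle x_i^{1/p^\infty}\rangle$, strictness of the second map, semiperfectness of $S/pS$ -- is fine and parallels the paper's argument, but the monoid step needs to be replaced with the free-presentation approach.
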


\begin{proof}
There exists a free $p$-complete algebra $F:=\Z_p[\widehat{\{x_i\}_{i\in I\cup J}}]$ and surjections $F\to A$ and $\N^J\to M$.
As in the proof of \cite[Lemma 4.28]{BMS19}, we obtain the quasisyntomic cover $F\to F_\infty$ by formally adjoining all $p$-th roots of $p$ and $x_i$ for all $i\in I\cup J$ in the $p$-complete sense.
Observe that $F_\infty$ is a perfectoid ring.

We impose the pre-log structures $P:=\N^J \to F$ and $P_\infty:=\N_\perf^J \to F_\infty$ sending the $j$-th unit vector to $x_j$ for all $j\in J$.
Since $F$ is flat over $\Z_p$,
we have equivalences
\[
\L_{(F,P)/\Z_p}\otimes_{\Z} \Z/p\Z
\simeq
\L_{(\F_p[\N^{I\cup J}],\N^J)/\F_p}
\simeq
\bigoplus_{i\in I} {\Bbb F}_p\{dx_i\} \oplus\bigoplus_{j\in J} {\Bbb F}_p\{{\rm dlog}(x_j)\} 
\] 
In particular,
$\L_{(F,P)/\Z_p}$ is $p$-completely flat,
and hence $(F,P)\in \lQSyn$.
Consider now the cocartesian square defining $\L_{(F_\infty,P_\infty)/\Z_p}$.
\[
\begin{tikzcd}
\L_{\Z[P_\infty]/\Z}\otimes_{\Z[P_\infty]}F_\infty
\ar[d]\ar[r]&
\L_{F_\infty/\Z_p}\ar[d]
\\
\L_{(\Z[P_\infty],P_\infty)/\Z}\otimes_{\Z[P_\infty]}F_\infty
\ar[r]&
\L_{(F_\infty,P_\infty)/\Z_p}.
\end{tikzcd}
\]
We already know $F_\infty\in \QSyn$ (in fact, it is a perfectoid ring).
Together with Lemma \ref{lem:perfectvanishing}, we see that $\L_{(F_\infty,P_\infty)/\Z_p}$ has $p$-complete Tor amplitude in $[-1,0]$.
From the transitivity triangle for $\Z_p\to (F,P)\to (F_\infty,P_\infty)$, we deduce that $\L_{(F_\infty,P_\infty)/(F,P)}$ has $p$-complete Tor amplitude in $[-1,0]$.

The map $P\to P_\infty$ is integral by \cite[Proposition I.4.6.7]{Ogu18} since $\Z[P]\to \Z[P_\infty]$ is flat.
Furthermore, $F_\infty$ is $p$-completely faithfully flat over $F$.
It follows that $(F,P)\to (F_\infty,P_\infty)$ is a log quasisyntomic cover.
Let $(A,M)\to (S,Q)$ be the $p$-complete base change of $(F,P)\to (F_\infty,P_\infty)$ along $(F,P)\to (A,M)$,
where $Q:=P^\infty\oplus_P M$.
The integrality of $P\to P^\infty$ ensures that $Q$ is integral.
Since $(A,M)\in \lQSyn$,
we have $(S,Q)\in \lQSyn$.
Furthermore,
$S$ receives a map from the perfectoid ring $F_\infty$.
Since $F_\infty\to S$ and $P_\infty\to Q$ are surjective,
$S$ and $Q$ are semiperfect.
Hence we have $(S,Q)\in \lQRSPerfd$.
\end{proof}

\subsection{Unfolding in the log setting} We now aim to describe the process of \emph{unfolding} in the logarithmic context, in analogy with the equivalence of \cite[Proposition 4.31]{BMS19}. We will need the following statement, in analogy with \cite[Lemma 4.30]{BMS19}: 

\begin{lemma}
\label{syntomic.11}
Let $(A,M)\to (S,Q)$ be a log quasisyntomic cover in $\lQSyn$.
If $(S,Q)\in \lQRSPerfd$, then all terms in the \v{C}ech nerve $(S^\bullet,Q^\bullet)$ is in $\lQRSPerfd$.
\end{lemma}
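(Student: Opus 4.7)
The plan is to verify, for every $n \geq 0$, that the term $(S^n, Q^n) := (S^{\widehat{\otimes}_A (n+1)}, Q^{\oplus_M (n+1)})$ in the \v{C}ech nerve satisfies each of the three conditions of Definition \ref{def:logqrsperfd}. The argument follows the same template as \cite[Lemma 4.30]{BMS19}, with only minor bookkeeping to accommodate the monoid factor.

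For condition (1), I would invoke the closure of log quasisyntomic maps under base change (noted in the remark following Definition \ref{def:logqrsperfd} and relying on flat base change for the Gabber cotangent complex together with bounded $p^\infty$-torsion of the relevant rings). Iterated base change of $(A, M) \to (S, Q)$ along itself yields a log quasisyntomic cover $(S, Q) \to (S^n, Q^n)$, and composing with the original cover $(A, M) \to (S, Q)$ — using closure under composition — shows that $(A, M) \to (S^n, Q^n)$ is log quasisyntomic. Since $(A, M) \in \lQSyn$, it follows that $(S^n, Q^n) \in \lQSyn$.

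For condition (2), it is enough to observe that $S^n$ receives a map from $S$ through any of the structural inclusions of a tensor factor, and $S$ in turn admits a map from a perfectoid ring by assumption. For condition (3), semiperfectness of $S^n/pS^n$ follows by writing it as the iterated (derived) tensor product $(S/pS)^{\otimes_{A/pA}(n+1)}$ and noting that the Frobenius on a tensor product of semiperfect $\mathbb{F}_p$-algebras is again surjective, being the tensor product of surjective Frobenii. Similarly, $Q^n$ is the $(n+1)$-fold pushout of $Q$ over $M$; since the $p$-th power map is a functor of monoids and commutes with colimits, surjectivity of $F_Q$ forces surjectivity of $F_{Q^n}$. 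Integrality of $Q^n$ (needed for the whole setup) is preserved by pushouts of integral monoids along integral maps, which will hold for the covers under consideration as in the proof of Proposition \ref{prop:logquasicover}.

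The only potentially delicate step is ensuring that the $p$-completed tensor products behave as expected — in particular, that $S^n/pS^n$ is genuinely the tensor product of $S/pS$'s rather than merely a derived object. This is taken care of by the $p$-completely flat hypothesis on $A \to S$, which guarantees that the $p$-complete derived tensor product coincides with the ordinary one modulo $p$, so no subtleties arise.
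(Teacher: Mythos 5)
Your argument follows essentially the same route as the paper: you verify each of the three conditions of Definition \ref{def:logqrsperfd} for each term of the \v{C}ech nerve, using closure of log quasisyntomic covers under base change and composition for condition (1), the map from the perfectoid source of $S$ for condition (2), and surjectivity-of-Frobenius arguments for condition (3). The paper's own proof is exactly this (it just delegates the ring-theoretic part to \cite[Lemma 4.30]{BMS19}). Your remark about the $p$-completely flat hypothesis guaranteeing that $S^n/pS^n$ is the discrete tensor product $(S/pS)^{\otimes_{A/pA}(n+1)}$ is correct and is the right thing to worry about.

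One step is stated a bit too loosely. You claim that surjectivity of $F_{Q^n}$ follows because the Frobenius ``commutes with colimits'' and $F_Q$ is surjective. But the pushout diagram defining $Q^n = Q \oplus_M Q \oplus_M \cdots \oplus_M Q$ has copies of $M$ as well as copies of $Q$, and $F_M$ need not be surjective ($(A,M)$ is only assumed to be in $\lQSyn$, not $\lQRSPerfd$). A colimit of a diagram some of whose Frobenius arrows are not surjective is not forced to have surjective Frobenius by a blanket ``colimits preserve surjections'' argument. The clean fix, and the one the paper uses (in the analogous step of the proof that ${\rm lQRSPerfd}^{\rm op}$ is a site), is to note that $Q^n$ is a quotient of the direct sum $Q^{\oplus(n+1)}$, which \emph{is} semiperfect as a coproduct of semiperfect monoids, and that any quotient of a semiperfect monoid is semiperfect. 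Equivalently, every class in $Q^n$ is represented by a tuple of elements of $Q$, each of which has a $p$-th root; what matters is that every element of the pushout is a product of elements in the images of the $Q$-factors, so the copies of $M$ play no role in the surjectivity check. With this small correction your proof is sound and matches the paper.
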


\begin{proof}
Each term in the \v{C}ech nerve is log quasisyntomic and receives a map from a perfectoid ring since $S$ does. Now each term $Q^{\oplus_M i}$ of the \v{C}ech nerve of the underlying map of monoids is semiperfect, and the relevant statement for the underlying rings follows from the proof of \cite[Lemma 4.30]{BMS19}
\end{proof}

The proof of the following is now completely analogous to \cite[Proposition 4.31]{BMS19}: 

\begin{theorem}
\label{thm:unfolding}
For all presentable $\infty$-categories $\cV$, restriction along ${\rm lQRSPerd}^{\rm op} \to {\rm lQSyn}^{\rm op}$ induces an equivalence
\[{\rm Shv}(\lQSyn^{\rm op},\cV)
\simeq
{\rm Shv}(\lQRSPerfd^{\rm op},\cV)\] of $\infty$-categories. \qed
\end{theorem}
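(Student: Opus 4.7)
The plan is to apply a standard comparison lemma for sites, reducing the equivalence to the fact that ${\rm lQRSPerfd}^{\rm op}$ is a basis of ${\rm lQSyn}^{\rm op}$ in a suitable sense; this is exactly the strategy of \cite[Proposition 4.31]{BMS19}. Concretely, the inclusion $\iota \colon {\rm lQRSPerfd}^{\rm op} \hookrightarrow {\rm lQSyn}^{\rm op}$ is continuous with respect to the induced topologies, so restriction $\iota^*$ automatically preserves sheaves. The content is therefore to construct an explicit quasi-inverse (the \emph{unfolding} $F \mapsto F^\sqsupset$) and to verify that both composites are equivalent to the identity.

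To construct the unfolding, I would proceed as follows. Given a sheaf $F$ on ${\rm lQRSPerfd}^{\rm op}$ with values in a presentable $\infty$-category $\mathcal{V}$, and an object $(A, M) \in {\rm lQSyn}$, use Proposition \ref{prop:logquasicover} to choose a log quasisyntomic cover $(A, M) \to (S, Q)$ with $(S, Q) \in {\rm lQRSPerfd}$, form its \v{C}ech nerve $(S^\bullet, Q^\bullet)$, and set
\[
F^\sqsupset(A, M) := \lim_{[n] \in \Delta} F(S^n, Q^n).
\]
This formula makes sense because each term $(S^n, Q^n)$ of the \v{C}ech nerve remains in ${\rm lQRSPerfd}$ by Lemma \ref{syntomic.11}.

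The outstanding verifications are then: (i) the value $F^\sqsupset(A, M)$ is independent of the chosen cover, shown by passing to a common refinement (which can again be arranged inside ${\rm lQRSPerfd}$ by reapplying Proposition \ref{prop:logquasicover} and Lemma \ref{syntomic.11}) and invoking the sheaf property of $F$; (ii) the assignment is functorial in $(A, M)$, using that any map in ${\rm lQSyn}$ can be lifted between compatible choices of covers after a further refinement; (iii) $F^\sqsupset$ itself satisfies the sheaf condition on ${\rm lQSyn}^{\rm op}$, which reduces, given a log quasisyntomic cover $(A, M) \to (B, N)$ in ${\rm lQSyn}$, to refining $(B, N)$ by a log quasiregular semiperfectoid and comparing the two resulting bicosimplicial diagrams; and (iv) the canonical maps $\iota^*F^\sqsupset \xrightarrow{\simeq} F$ and $G \xrightarrow{\simeq} (\iota^* G)^\sqsupset$ are equivalences for $F$ a sheaf on ${\rm lQRSPerfd}^{\rm op}$ and $G$ a sheaf on ${\rm lQSyn}^{\rm op}$, both of which again follow from the sheaf properties after choosing an appropriate cover.

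The main obstacle is the bookkeeping around independence of the cover and the cofinality arguments needed to pass to common refinements; the crucial input is that refinements of covers by log quasiregular semiperfectoids can themselves be chosen log quasiregular semiperfectoid, which is what Lemma \ref{syntomic.11} together with Proposition \ref{prop:logquasicover} supplies. Once this is in hand, the entire argument is formal and proceeds exactly as in \cite[Proposition 4.31]{BMS19}, so essentially no new ideas beyond those captured in the two previous results are required.
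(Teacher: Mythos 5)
Your proposal is correct and matches the paper's approach: the paper simply observes that once Proposition \ref{prop:logquasicover} (existence of covers by log quasiregular semiperfectoids) and Lemma \ref{syntomic.11} (\v{C}ech nerves of such covers stay in ${\rm lQRSPerfd}$) are in place, the argument of \cite[Proposition 4.31]{BMS19} applies verbatim, which is exactly the strategy you outline and then spell out in more detail.
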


As in \cite{BMS19}, we shall write \[{\rm Shv}({\rm lQRSPerfd}^{\rm op}, \cV) \xrightarrow{} {\rm Shv}({\rm lQSyn}^{\rm op}, \cV), \quad F \mapsto F^{\sqsupset}\] for the resulting equivalence. We shall refer to the resulting sheaf $F^\sqsupset$ on ${\rm lQSyn}$ as the \emph{unfolding} of the sheaf $F$ on ${\rm lQRSPerfd}$. 

\begin{remark}[Variants of ${\rm lQSyn}$ and ${\rm lQRSPerfd}$] As in \cite{BMS19}, we will need to work with slight variants of the categories  ${\rm lQSyn}$ and ${\rm lQRSPerfd}$. For example, for any pre-log ring $(A, M)$, we write ${\rm lQSyn}_{(A, M)}$ for the category of $(A, M)$-algebras $(B, N)$ with $(B, N)$ log quasisyntomic, while we write ${\rm lQRSPerfd}_{(A, M)}$ for the full subcategory consisting of those $(B, N)$ that are log quasiregular semiperfectoid rings. Similarly, if $(A, M)$ is log quasisyntomic, we write ${\rm lqSyn}_{(A, M)}$ and ${\rm lqrsPerfd}_{(A, M)}$ for the categories of log quasisyntomic and log quasiregular semiperfectoid $(A, M)$-algebras, respectively. As explained in \cite[Section 4.5]{BMS19}, analogs of the results above continue to hold in these categories.
\end{remark}

\section{Log de Rham cohomology}\label{sec:logderham} We now aim to establish logarithmic analogs of the results of \cite[Section 5]{BMS19}. First, fix a log quasisyntomic pre-log ring $(R, P)$, and let $\widehat{{\rm DF}}(R)$ denote the complete filtered derived category of $R$, that is, the full subcategory of the filtered derived category ${\rm DF}(R) := {\rm Fun}(({\Bbb Z}, \ge), {\rm D}(R))$ of filtration complete objects.

\begin{lemma}\label{lem:hodgecompletederhamsheaf} The presheaf \[{\rm lQSyn}_{(R, P)} \to \widehat{{\rm DF}}(R), \quad (A, M) \mapsto (\widehat{L\Omega}_{(A, M) / (R, P)}, \widehat{L\Omega}^{\ge *}_{(A, M) / (R, P)})\] is a sheaf. 
\end{lemma}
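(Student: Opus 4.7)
The plan is to reduce the sheaf property to descent for wedge powers of the Gabber cotangent complex via the Hodge filtration.

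Fix a log quasisyntomic cover $(A,M) \to (B,N)$ in $\mathrm{lQSyn}_{(R,P)}$ with \v{C}ech nerve $(B^\bullet, N^\bullet)$. I need to show that the canonical map
\[
\widehat{L\Omega}^{\geq *}_{(A,M)/(R,P)} \longrightarrow \mathrm{Tot}\bigl( \widehat{L\Omega}^{\geq *}_{(B^\bullet, N^\bullet)/(R,P)} \bigr)
\]
is an equivalence in $\widehat{\mathrm{DF}}(R)$. Since $\widehat{\mathrm{DF}}(R)$ consists of filtration-complete objects, totalization commutes with limits, and a map of complete filtered objects is an equivalence if and only if each associated graded map is, it suffices to verify descent on each Hodge graded piece. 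By the very construction of the derived de Rham complex (see \cite[Section 6]{Bha12} in the non-log case, carried out in the log setting using the Gabber cotangent complex as in \cite{BLPO}), the $n$-th Hodge graded piece is naturally equivalent to a shift of $\wedge^n \mathbb{L}_{(-,-)/(R,P)}$.

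Thus the problem reduces to showing that for each $n \geq 0$, the functor $(A,M) \mapsto \wedge^n \mathbb{L}_{(A,M)/(R,P)}$ satisfies $p$-complete descent along log quasisyntomic covers. This is precisely where Theorem \ref{thm:gabberdescent} applies: a log quasisyntomic cover is, by definition, $p$-completely faithfully flat on underlying rings, and integral on underlying monoids. Integrality of integral monoid maps implies flatness in the sense of \eqref{eq:Bhatt-flatness} (cfr. the equivalence \eqref{pushtrunc}), so after working $p$-completely the hypotheses of Theorem \ref{thm:gabberdescent} are met.

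The main technical obstacle will be the passage between the $p$-complete setting of $\mathrm{lQSyn}$ and the formulation of Theorem \ref{thm:gabberdescent}, which is stated integrally. As in the proof of Proposition \ref{prop:loghhflatdescent}, this is handled by reducing modulo $p$ (where $p$-complete faithful flatness becomes genuine faithful flatness), applying the descent statement there, and lifting back via derived $p$-completion using the bounded $p^\infty$-torsion guaranteed by the log quasisyntomic hypothesis together with derived Nakayama. Finally, completeness of the Hodge filtration, which is built into $\widehat{L\Omega}$, is essential to reassemble the descent statements on graded pieces into descent for the full filtered object.
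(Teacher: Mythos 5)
Your proof is correct and follows essentially the same route as the paper's: reduce the sheaf condition, via completeness of the Hodge filtration and passage to associated graded, to $p$-complete descent for the wedge powers $(\wedge^n \mathbb{L}_{(-,-)/(R,P)})^\wedge_p$, which is Theorem \ref{thm:gabberdescent}. The paper's proof is terser (citing the analogy with \cite[Example 5.11]{BMS19} and leaving the $p$-completion bookkeeping implicit), but the underlying argument is the one you give.
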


\begin{proof} This is analogous to \cite[Example 5.11]{BMS19}: It boils down to checking that $(A, M) \mapsto (\wedge_A^i {\Bbb L}_{(A, M)/(R, P)})^\wedge_p$ is a sheaf, which follows from Theorem \ref{thm:gabberdescent}.  
\end{proof}

\begin{lemma}\label{lem:wedgepowersflat} Let $(S, Q) \in {\rm lqrsPerfd}_{(R, P)}$. The complex $\wedge^i_S {\Bbb L}_{(S, Q) / (R, P)}[-i]$ is $p$-completely flat for each $i \ge 0$.
\end{lemma}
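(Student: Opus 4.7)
The strategy mirrors the non-logarithmic counterpart \cite[Lemma 4.28]{BMS19}: determine the $p$-complete Tor-amplitude of the relative log cotangent complex, then invoke Illusie's derived d\'ecalage to compute its derived exterior powers.

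\textbf{Step 1 (Tor-amplitude).} The key point is to show that $\L_{(S,Q)/(R,P)}$ has $p$-complete Tor-amplitude concentrated in degree $-1$. The transitivity cofiber sequence
\[
S \otimes^L_R \L_{(R,P)/\Z_p} \longrightarrow \L_{(S,Q)/\Z_p} \longrightarrow \L_{(S,Q)/(R,P)},
\]
combined with Lemma~\ref{syntomic.13} (the middle term is concentrated in degree $-1$ after $p$-completion) and the hypothesis $(R,P)\in\lQSyn$ (so the first term has $p$-complete Tor-amplitude in $[-1,0]$), forces the outer cohomology groups of $\L_{(S,Q)/(R,P)} \otimes^L_S N$ to vanish except possibly in degrees $-2$ and $-1$ for every $S/p$-module $N$. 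The vanishing of $H^0$ is automatic from the long exact sequence since $H^0(\L_{(S,Q)/\Z_p}\otimes^L N)=0$ and $H^1(S\otimes^L_R \L_{(R,P)/\Z_p}\otimes^L N)=0$. What remains is the vanishing of $H^{-2}$, which reduces to the injectivity of the connecting map
\[
H^{-1}\bigl(\L_{(R,P)/\Z_p}\otimes^L_R N\bigr) \longrightarrow H^{-1}\bigl(\L_{(S,Q)/\Z_p}\otimes^L_S N\bigr)
\]
for all such $N$. This we obtain via a purity argument in the spirit of Lemma~\ref{syntomic.8}: using Remark~\ref{rem:logperfectoid}, fix a pre-log perfectoid cover $(R\langle Q^\flat\rangle, Q^\flat) \to (S,Q)$, pass to the $p$-completed base change $(R'',P''):= (R,P)\otimes^L_{(\Z_p,\{1\})}(R\langle Q^\flat\rangle, Q^\flat)$, and reduce functorially to the case of a perfect field $k$ equipped with the perfect monoid $Q_{\rm perf}$, where the residual cotangent complex vanishes by Corollary~\ref{cor:perfectvanishing} and Lemma~\ref{lem:perfectvanishing}.

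\textbf{Step 2 (D\'ecalage).} Once Step~1 is available, we may write $\L_{(S,Q)/(R,P)} \simeq N[1]$ after derived $p$-completion, with $N$ a $p$-completely flat $S$-module placed in degree $0$. Illusie's d\'ecalage isomorphism for the derived exterior algebra \cite[I.4.3.2]{Ill71} then yields $L\wedge^i_S(N[1]) \simeq L\Gamma^i_S(N)[i]$, whence
\[
\wedge^i_S \L_{(S,Q)/(R,P)}[-i] \;\simeq\; \Gamma^i_S(N)
\]
after $p$-completion. Since divided powers of a $p$-completely flat module are $p$-completely flat, the proof is complete.

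\textbf{Main obstacle.} The bulk of the technical work lies in Step~1, specifically in extending the purity argument from Lemma~\ref{syntomic.8} (formulated for a perfectoid base with trivial log structure) to an arbitrary log quasisyntomic base $(R,P)$. The absence of a canonical map $(R,P)\to (R\langle Q^\flat\rangle, Q^\flat)$ forces the detour through the base change $(R'',P'')$, where the perfectoid flatness of $R\langle Q^\flat\rangle$ combined with the log quasisyntomic assumption on $(R,P)$ lets the perfect-field reduction of Lemma~\ref{syntomic.8} be imported intact.
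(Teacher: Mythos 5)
Your Step 2 (d\'ecalage) is exactly what the paper does. The problem is in Step 1, where you are solving a harder problem than necessary and then do not actually solve it.

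By the conventions recorded in the remark on variants of ${\rm lQSyn}$ and ${\rm lQRSPerfd}$, the hypothesis $(S,Q)\in{\rm lqrsPerfd}_{(R,P)}$ means that the structure map $(R,P)\to(S,Q)$ is itself log quasisyntomic. In particular, $\L_{(S,Q)/(R,P)}$ is \emph{already} known to have $p$-complete Tor-amplitude in $[-1,0]$; there is no $H^{-2}$ to worry about, so the entire purity discussion at the end of your Step 1 --- and the detour through the base change $(R'',P'')$ --- is redundant. The paper's argument for Step 1 consists solely of your first observation: since $\L_{(S,Q)/\Z_p}$ is concentrated in degree $-1$ after $p$-completion (Lemma~\ref{syntomic.13}) and $\L_{(R,P)/\Z_p}$ has $p$-complete Tor-amplitude in $[-1,0]$, the long exact sequence attached to the transitivity cofiber sequence for $(\Z_p,\{1\})\to(R,P)\to(S,Q)$ forces $H^0(\L_{(S,Q)/(R,P)}\otimes^L_S N)=0$ for every $S/p$-module $N$, and together with the $[-1,0]$-Tor-amplitude from the hypothesis that finishes the reduction.

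Moreover, the part you did try to carry out is not sound as written. Your proposed resolution of the ``main obstacle'' --- importing the purity argument of Lemma~\ref{syntomic.8} via the $p$-completed base change $(R'',P'')$ and a reduction to a perfect field carrying $Q_{\rm perf}$ --- does not come with a map $(R'',P'')\to(S,Q)$ factoring $(R,P)\to(S,Q)$ in the way your reduction requires, and you rightly flag this as unresolved. In the present setting, there is nothing to resolve: drop the $H^{-2}$ discussion, cite the $[-1,0]$-Tor-amplitude that comes for free with the category ${\rm lqrsPerfd}_{(R,P)}$, and the rest of your argument goes through unchanged.
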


\begin{proof}This is the analog of \cite[Lemma 5.14]{BMS19}. Let us first treat the case $i = 1$. By definition, the map $(R, P) \to (S, Q)$ is log quasisyntomic with $(S, Q)$ a log quasiregular semiperfectoid ring, so that ${\Bbb L}_{(S, Q) / (R, P)}$ has $p$-complete ${\rm Tor}$-amplitude in $[-1, 0]$. Combining the transitivity sequence for the composite $({\Bbb Z}_p, \{1\}) \to (R, P) \to (S, Q)$ with Lemma \ref{syntomic.13}, we deduce that ${\Bbb L}_{(S, Q)/(R, P)}$ has $p$-complete ${\rm Tor}$-amplitude concentrated in degree $-1$, as desired. 

To treat the general case, we use the equivalence \cite[Proposition 4.3.2.1]{Ill71} $\wedge^i_S {\Bbb L}_{(S, Q) / (R, P)}[-i] \simeq \Gamma_S^i ({\Bbb L}_{(S, Q)/(R, P)}[-1])$ and that derived divided powers preserve $p$-complete flatness (here, we denoted by $\Gamma^i_S(M)$ the $i$-th derived divided power algebra of an $S$-module $M$). 
\end{proof}

\begin{corollary}\label{cor:loghhoddvanish} Let $(S, Q) \in {\rm lqrsPerfd}_{(R, P)}$. Then the groups $\pi_*{\rm HH}((S, Q)/(R, P) ; {\Bbb Z}_p)$ vanish in odd degrees, while in even degrees $2i$ they identify with the $p$-completion of $\wedge^i {\Bbb L}_{(S, Q) / (R, P)}[-i]$.
\end{corollary}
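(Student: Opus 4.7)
The plan is to deduce this corollary from the log HKR filtration (Theorem \ref{thm:loghkrfilt}) together with the flatness statement of Lemma \ref{lem:wedgepowersflat}, by a parity argument on the resulting spectral sequence.

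First I would apply Theorem \ref{thm:loghkrfilt} to the map $(R,P) \to (S,Q)$ to obtain a separated, descending filtration on $\mathrm{HH}((S,Q)/(R,P))$ with graded pieces $(\wedge_S^i \L_{(S,Q)/(R,P)})[i]$. Derived $p$-completion is exact and commutes with colimits, so it yields a $p$-completed filtration on $\mathrm{HH}((S,Q)/(R,P); \Z_p)$ whose graded pieces are the derived $p$-completions of $(\wedge_S^i \L_{(S,Q)/(R,P)})[i]$. Since each $p$-completed graded piece is derived $p$-complete (being concentrated, after completion, in a single degree as argued below), the $p$-completed filtration remains complete, so the associated spectral sequence converges strongly to $\pi_* \mathrm{HH}((S,Q)/(R,P); \Z_p)$.

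Next I would invoke Lemma \ref{lem:wedgepowersflat}: under our standing hypothesis $(S,Q) \in \mathrm{lqrsPerfd}_{(R,P)}$, the complex $\wedge_S^i \L_{(S,Q)/(R,P)}[-i]$ is $p$-completely flat for every $i \geq 0$. Equivalently, $(\wedge_S^i \L_{(S,Q)/(R,P)})^{\wedge}_p$ is concentrated in homological degree $i$, with $\pi_i$ equal to the $p$-completion of $\wedge_S^i \L_{(S,Q)/(R,P)}[-i]$. Applying the degree shift $[i]$ to pass to the $i$-th graded piece, we conclude that the $p$-completion of $(\wedge_S^i \L_{(S,Q)/(R,P)})[i]$ is concentrated in even degree $2i$.

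Finally, the $E^1$-page of the convergent spectral sequence is therefore supported entirely in even total degrees. All differentials necessarily connect an even to an odd degree and hence vanish, so the spectral sequence degenerates at $E^1$. This immediately gives the vanishing of $\pi_*\mathrm{HH}((S,Q)/(R,P); \Z_p)$ in odd degrees, while $\pi_{2i}$ is read off from the $i$-th graded piece as the $p$-completion of $\wedge_S^i \L_{(S,Q)/(R,P)}[-i]$, as claimed. The only subtle point is the convergence/completeness of the $p$-completed HKR filtration, but this is guaranteed once the graded pieces are themselves derived $p$-complete, which is what Lemma \ref{lem:wedgepowersflat} provides.
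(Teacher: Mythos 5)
Your proposal is correct and follows essentially the same route as the paper: invoke the log HKR filtration (Theorem \ref{thm:loghkrfilt}), use Lemma \ref{lem:wedgepowersflat} to see that each $p$-completed graded piece $(\wedge^i_S \L_{(S,Q)/(R,P)})^\wedge_p[i]$ is concentrated in degree $2i$, and conclude from the resulting spectral sequence (which the paper phrases as being concentrated in bidegrees $(i,i)$) that the odd homotopy groups vanish. Your extra care about completeness of the $p$-completed filtration is the same point the paper leaves implicit; note only that "derived $p$-completion is exact and commutes with colimits" should be stated with care, but it does not affect the argument.
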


\begin{proof} This follows from the log ${\rm HKR}$-filtration (Theorem \ref{thm:loghkrfilt}): The resulting spectral sequence \[E^2_{i, j} = \pi_i(\wedge^j_S ({{\Bbb L}}_{(S, Q) / (R, P)})^\wedge_p) \implies \pi_{i + j}{\rm HH}((S, Q) / (R, P) ; {\Bbb Z}_p)\] is concentrated in bidegrees $(i, i)$ by Lemma \ref{lem:wedgepowersflat}, which gives the result. 
\end{proof}

\begin{remark}\label{rem:perfectoidbase} Lemma \ref{lem:wedgepowersflat}, and consequently Corollary \ref{cor:loghhoddvanish}, hold in slightly greater generality. For example, 
\begin{enumerate}
\item if $(R, P) = ({\Bbb Z}_p, \{1\})$, we can take $(S, Q)$ to be any log quasiregular semiperfectoid ring. This is the content of Lemma \ref{syntomic.13}. 
\item If $R$ is a perfectoid ring with a trivial log structure, we can still take $(S, Q)$ to be any log quasiregular semiperfectoid ring over $R$. This is the content of Lemma \ref{syntomic.8}. 
\end{enumerate}
\end{remark}

\begin{corollary}\label{cor:hcminussheaf} The presheaf \[{\rm lqrsPerfd}_{(R, P)} \to D(R), \quad (A, M) \mapsto \pi_0{\rm HC}^{-}((A, M) / (R, P) ; {\Bbb Z}_p)\] is a sheaf. 
\end{corollary}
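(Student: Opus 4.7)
My plan is to realize the presheaf $(A,M)\mapsto\pi_0\mathrm{HC}^-((A,M)/(R,P);\Z_p)$ on $\mathrm{lqrsPerfd}_{(R,P)}$ as a $D(R)$-valued inverse limit of sheaves produced by the homotopy fixed point spectral sequence for $\mathrm{HC}^-=\HH^{hS^1}$. The key input is the evenness and connectivity of $\HH$ on log quasiregular semiperfectoids from Corollary \ref{cor:loghhoddvanish} and Lemma \ref{lem:wedgepowersflat}: for $(S,Q)\in \mathrm{lqrsPerfd}_{(R,P)}$, the group $\pi_{2i}\HH((S,Q)/(R,P);\Z_p)$ is the derived $p$-completion of the discrete $S$-module $\wedge^i\L_{(S,Q)/(R,P)}[-i]$, while odd homotopy vanishes.

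Using the Postnikov fiber sequence $\tau_{\ge 2k}\HH \to \HH \to \tau_{<2k}\HH$, the fact that $\tau_{\ge 2k}$ is a right adjoint (and thus preserves limits), and closure of sheaves of spectra under cofibers in the stable setting, the presheaf $\tau_{<2k}\HH$ is a sheaf of spectra on $\mathrm{lqrsPerfd}_{(R,P)}$. Taking $(-)^{hS^1}$ preserves the sheaf property, and each $\pi_0(\tau_{<2k}\HH)^{hS^1}$ is a $D(R)$-valued sheaf because it admits a finite filtration---via the HFPSS, concentrated in even bidegrees and hence with $\pi_1=\pi_{-1}=0$ by parity---whose graded pieces are $\pi_{2i}\HH\simeq(\wedge^i\L_{(-,-)/(R,P)}[-i])^\wedge_p$ for $i=0,\dots,k-1$, each a sheaf by Theorem \ref{thm:gabberdescent}; closure of $D(R)$-valued sheaves under finite extensions then gives the claim by induction on $k$.

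Connectivity of $\HH$ gives $\HH\simeq\lim_k\tau_{<2k}\HH$ and hence $\mathrm{HC}^-\simeq\lim_k(\tau_{<2k}\HH)^{hS^1}$, while the same parity argument forces the pointwise Milnor sequence to collapse, producing the identification $\pi_0\mathrm{HC}^-\simeq R\lim_k\pi_0(\tau_{<2k}\HH)^{hS^1}$ of $D(R)$-valued presheaves. Since $R\lim$ preserves the $D(R)$-valued sheaf property, $\pi_0\mathrm{HC}^-$ is itself a sheaf. The main obstacle is the bookkeeping required to ensure that the Nygaard-type filtration on $\pi_0\mathrm{HC}^-$ is simultaneously complete and strictly functorial across the \v{C}ech nerve of a cover in $\mathrm{lqrsPerfd}_{(R,P)}$, so that sheafiness of each finite truncation is preserved under the limit; both reduce to the already-established evenness and connectivity of $\HH$ on log quasiregular semiperfectoids.
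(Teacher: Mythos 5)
Your argument reproduces the paper's strategy: evenness of $\HH$ on log quasiregular semiperfectoids (Corollary~\ref{cor:loghhoddvanish}, via Lemma~\ref{lem:wedgepowersflat}) forces the homotopy fixed points spectral sequence to degenerate, giving a complete, functorial filtration on $\pi_0\HH^{hS^1}$ whose graded pieces are $(\wedge^i\L_{(-,-)/(R,P)})^\wedge_p[-i]$, each a sheaf by Theorem~\ref{thm:gabberdescent}; the paper then appeals to the argument of Lemma~\ref{lem:hodgecompletederhamsheaf}, whereas you interpose the Postnikov tower $\tau_{<2k}\HH$ and pass to a limit, which makes the completeness explicit but is the same idea.

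One technical misstatement to flag: $\tau_{\ge 2k}$, viewed as an endofunctor of spectra, is \emph{not} a right adjoint and does not preserve limits (it is the right adjoint of the inclusion $\mathrm{Sp}_{\ge 2k}\hookrightarrow\mathrm{Sp}$, but postcomposing with that inclusion destroys limit-preservation; one sees the failure on totalizations directly). Consequently, the sheaf property for $\tau_{<2k}\HH$ as a presheaf of spectra does not follow formally from the Postnikov cofiber sequence. Fortunately, this claim is not load-bearing: your second argument---that $\pi_0(\tau_{<2k}\HH)^{hS^1}$ carries a finite, functorial filtration from the degenerate HFPSS whose graded pieces are the sheaves $(\wedge^i\L)^\wedge_p[-i]$ for $0\le i<k$, together with induction on $k$---establishes the finite-stage sheafiness independently. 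The parity argument ($\pi_1=\pi_{-1}=0$) is exactly what kills the Milnor $\lim^1$ term and makes the transition maps in the tower surjective, so that the derived limit in $D(R)$ agrees with the ordinary one. With the adjointness claim removed, the proposal is correct and matches the paper.
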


\begin{proof} The homotopy fixed point spectral sequence computing ${\rm HC}^{-}((A, M) / (R, P))$ collapses by Corollary \ref{cor:loghhoddvanish}, and we obtain a complete filtration of its $\pi_0$ with graded pieces $(\wedge_S^i {\Bbb L}_{(S, Q) / (A, M)})^\wedge_p[-i]$ so that we can argue as in the proof of Lemma \ref{lem:hodgecompletederhamsheaf}.  
\end{proof}

By Corollary \ref{cor:hcminussheaf}, we may consider the unfolding $\pi_0{\rm HC}^-((-, -) / (R, P) ; {\Bbb Z}_p)^{\sqsupset}$, a sheaf on ${\rm lqSyn}_{(R, P)}$. In analogy with \cite[Proposition 5.15]{BMS19}, we have:

\begin{proposition} There is a canonical identification \[\pi_0{\rm HC}^-((-, -) / (R, P) ; {\Bbb Z}_p)^{\sqsupset} \simeq \widehat{L\Omega}_{(-, -) / (R, P)}\] of sheaves on ${\rm lqSyn}_{(R, P)}$. 
\end{proposition}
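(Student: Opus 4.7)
Both sides of the claimed equivalence are sheaves on ${\rm lqSyn}_{(R,P)}$ --- the right-hand side by Lemma \ref{lem:hodgecompletederhamsheaf}, and the left-hand side by Corollary \ref{cor:hcminussheaf} combined with the definition of the unfolding. By Theorem \ref{thm:unfolding}, it therefore suffices to produce a natural equivalence on restriction to ${\rm lqrsPerfd}_{(R,P)}$. I will construct this comparison using the log HKR-filtration on ${\rm HC}^-((-,-)/(R,P);{\Bbb Z}_p)$ from Theorem \ref{thm:logantieau}, whose $n$-th graded piece is $\widehat{L\Omega}^{\geq n}_{(-,-)/(R,P)}[2n]$.

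Fix $(S,Q) \in {\rm lqrsPerfd}_{(R,P)}$. Lemma \ref{lem:wedgepowersflat} implies that each $p$-completed wedge power $\wedge^n_S {\Bbb L}_{(S,Q)/(R,P)}[-n]$ is concentrated in a single cohomological degree; hence the graded pieces of the HKR filtration on ${\rm HC}^-((S,Q)/(R,P); {\Bbb Z}_p)$ are connective with homotopy concentrated in a single degree, and the resulting spectral sequence computing $\pi_0$ degenerates. This equips $\pi_0 {\rm HC}^-((S,Q)/(R,P); {\Bbb Z}_p)$ with an exhaustive, complete filtration whose $n$-th graded piece identifies canonically with the $n$-th Hodge graded piece of $\widehat{L\Omega}_{(S,Q)/(R,P)}$, namely the $p$-completion of $\wedge^n_S {\Bbb L}_{(S,Q)/(R,P)}[-n]$.

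To promote this match of graded pieces into an identification of filtered objects, I would follow the strategy of \cite[Proposition 5.15]{BMS19}: the successive extensions building $\pi_0 {\rm HC}^-((S,Q)/(R,P);{\Bbb Z}_p)$ from its graded pieces are controlled by the connecting maps of the HKR filtration, and these coincide with the log de Rham differential by Proposition \ref{prop:logderham} (compatibility of the circle action on log Hochschild homology with the log de Rham differential). Hence the resulting filtered object is precisely the Hodge-filtered derived log de Rham complex $\widehat{L\Omega}_{(S,Q)/(R,P)}$. Completeness of both filtrations then upgrades this into an identification natural in $(S,Q)$, and unfolding yields the stated sheaf-level equivalence on ${\rm lqSyn}_{(R,P)}$.

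The main obstacle is the identification of the connecting maps in the HKR filtration on $\pi_0 {\rm HC}^-$ with the log de Rham differential, which is precisely the content of Proposition \ref{prop:logderham} and is what distinguishes the Hodge-filtered derived log de Rham complex from some other complex with the same associated graded. Once this identification is available, the rest follows formally from the completeness of both filtrations and the degeneration provided by Lemma \ref{lem:wedgepowersflat}.
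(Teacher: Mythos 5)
Your high-level strategy is modeled on the right template (BMS Proposition 5.15), and you correctly identify the three ingredients (the degenerate homotopy fixed points filtration, Lemma \ref{lem:wedgepowersflat} for discreteness of the graded pieces, and Proposition \ref{prop:logderham} for the differential). But the argument as sketched places the identification on the wrong subcategory, and this is where the gap is.

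You propose to build the comparison directly over ${\rm lqrsPerfd}_{(R,P)}$ and then invoke Theorem \ref{thm:unfolding}. The trouble is step (5), where you identify the ``connecting maps'' with the log de Rham differential via Proposition \ref{prop:logderham}. That proposition is a statement about the HKR-type map $\Phi_q \colon \Omega^q_{(A,M)/(R,P)} \to \pi_q\mathrm{HH}((A,M)/(R,P))$, i.e.\ it is phrased in terms of genuine log differential forms. For a log \emph{quasiregular semiperfectoid} $(S,Q)$, the graded pieces of your filtration are $(\wedge^n_S \mathbb{L}_{(S,Q)/(R,P)})^\wedge_p[-n]$, which are \emph{not} the modules $\Omega^n_{(S,Q)/(R,P)}$ (the maps $\Phi_n$ are far from isomorphisms here), and the differential on $\widehat{L\Omega}_{(S,Q)/(R,P)}$ is itself only defined via left Kan extension from polynomial/log-quasismooth algebras. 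So Proposition \ref{prop:logderham} does not identify the connecting maps for $(S,Q) \in {\rm lqrsPerfd}_{(R,P)}$, and there is no direct construction of the natural transformation on that site.

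What the paper does instead is run the argument after unfolding: the filtered sheaf $F := \pi_0\mathrm{HC}^-((-,-)/(R,P);\mathbb{Z}_p)$ on ${\rm lqrsPerfd}_{(R,P)}$ is unfolded to $F^{\sqsupset}$ on ${\rm lqSyn}_{(R,P)}$, and one evaluates on \emph{log quasismooth} $(A,M)$. There the $i$-th graded piece is honestly $(\Omega^i_{(A,M)/(R,P)}[-i])^\wedge_p$, Beilinson's theorem (cited as \cite[Theorem 5.4(3)]{BMS19}) recognizes $F^{\sqsupset}(A,M)$ as a cdga of the form $A \to (\Omega^1)^\wedge_p \to (\Omega^2)^\wedge_p \to \cdots$, and Proposition \ref{prop:logderham} \emph{now} applies to identify the differential with the log de Rham differential. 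Both $F^{\sqsupset}$ and $\widehat{L\Omega}_{(-,-)/(R,P)}$ are then compared on all of ${\rm lqSyn}_{(R,P)}$ by a left Kan extension argument, justified by the fact that the graded pieces already agree and are left Kan extended from log quasismooth algebras.

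A secondary issue: invoking Theorem \ref{thm:logantieau} to produce the relevant filtration is something of a short circuit. The proof at the end of Section \ref{sec:loghh} does establish that result via Antieau's machinery, but the proposition you are proving is exactly what yields the $p$-complete version of Theorem \ref{thm:logantieau} (see the remark immediately following it in the paper), and the ingredients it is built from are the HKR filtration on $\mathrm{HH}$ (Theorem \ref{thm:loghkrfilt}) and the homotopy fixed points spectral sequence, not Theorem \ref{thm:logantieau}. You also conflate the HKR filtration on $\mathrm{HH}$/$\mathrm{HC}^-$ with Antieau's de Rham filtration; they have different graded pieces and play different roles. In short: keep the ingredients you listed, but perform the comparison on log quasismooth algebras after unfolding, and then pass to ${\rm lqSyn}_{(R,P)}$ by left Kan extension.
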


\begin{proof} We now have all the ingredients to follow the steps of the proof of \cite[Proposition 5.15]{BMS19}. For the convenience of the reader, we spell this out here. 

Let $(S, Q) \in {\rm lqrsPerfd}_{(R, P)}$. As observed in the proof of Corollary \ref{cor:hcminussheaf}, the homotopy fixed points spectral sequence gives a filtration of $\pi_0{\rm HC}^-((S, Q) / (R, P) ; {\Bbb Z}_p)$ with graded pieces equivalent to $(\wedge_S^i {\Bbb L}_{(S, Q) / (A, M)})^\wedge_p[-i]$. Hence $\pi_0{\rm HC}^-((-, -) / (R, P) ; {\Bbb Z}_p)$ can be viewed as a sheaf on ${\rm lqrsPerfd}_{(R, P)}$ with values in $\widehat{{\rm DF}}(R)$, which we can unfold to a sheaf $F^{\sqsupset}$ on ${\rm lqSyn}_{(R, P)}$. 

Suppose $(A, M)$ is log quasismooth over $(R, P)$. Then the proof of Corollary \ref{cor:hcminussheaf} shows that $({\rm gr}^i F^{\sqsupset})(A, M) = (\Omega^i_{(A, M) / (R, P)}[-i])^{\wedge}_p$ . A theorem of Beilinson \cite[Theorem 5.4(3)]{BMS19} now implies that $F^{\sqsupset}(A, M)$ is given by a commutative differential graded $R$-algebra \[A \to (\Omega^1_{(A, M) / (R, P)})^{\wedge}_p \to (\Omega^2_{(A, M) / (R, P)})^{\wedge}_p  \to \cdots.\] By Proposition \ref{prop:logderham}, the differential is the log de Rham differential. 

This shows the statement for log quasismooth algebras, and hence for the respective left Kan extensions to all animated pre-log $(R, P)$-algebras. But we know that these left Kan extensions must agree with the original functors once restricted to ${\rm lqSyn}_{(R, P)}$, since this is true for their associated graded 
$({\Bbb L}_{(-, -) / (R, P)})^{\wedge}_p$, and therefore the result follows. 
\end{proof}

\begin{remark} We observe that this gives rise to a proof of a $p$-complete version of Theorem \ref{thm:logantieau}. See \cite[Section 5]{BMS19}. \end{remark}

\subsection{Further remarks on perfectoid bases} Let $(S, Q)$ be in ${\rm lQRSPerfd}$  and let $R$ be a perfectoid ring equipped with a surjection  $R \to S$. As pointed out in Remark \ref{rem:logperfectoid}, another natural choice of perfectoid ring surjecting onto $S$ is the monoid ring $R\langle Q^\flat \rangle$. By Theorem \ref{thm:loghkrfilt} and Corollary \ref{cor:perfectvanishing}, the canonical map \[{\rm HH}((S, Q) / R\langle Q^\flat \rangle ; {\Bbb Z}_p) \to {\rm HH}((S, Q) / (R\langle Q^\flat \rangle, Q^\flat) ; {\Bbb Z}_p)\] is an equivalence. By the $p$-complete variant of replete base change (Proposition \ref{prop:loghhrepletebasechange}) and transitivity for logarithmic Hochschild homology (Proposition \ref{lem:transitivitylogthh}), we obtain a further equivalence \[{\rm HH}((S, Q) / (R\langle Q^\flat \rangle, Q^\flat) ; {\Bbb Z}_p) \xrightarrow{\simeq} {\rm HH}((S, Q) / (R\langle Q^{\flat, \rm rep} \rangle, Q^{\flat, \rm rep}) ; {\Bbb Z}_p),\] where $Q^{\flat, \rm rep} \to Q$ is the repletion of the canonical map $Q^\flat \to Q$. As the resulting repletion map $Q^{\flat, \rm rep} \to Q$ is strict, this proves:

\begin{proposition}\label{prop:loghhvsordinaryhh} Let $(S, Q)$ be a log quasiregular semiperfectoid ring and let $R$ be a perfectoid ring surjecting to $S$. There is a canonical equivalence \[{\rm HH}((S, Q) / R\langle Q^\flat \rangle ; {\Bbb Z}_p) \simeq {\rm HH}(S / R\langle Q^{\flat, {\rm rep}}\rangle ; {\Bbb Z}_p)\] relating log Hochschild homology with ordinary Hochschild homology relative to the commutative ring $R\langle Q^{\flat, {\rm rep}} \rangle$. \qed
\end{proposition}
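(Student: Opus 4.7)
The plan is to chain three comparison equivalences, following the sketch that precedes the statement.

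First, I would compare ${\rm HH}((S, Q) / R\langle Q^\flat \rangle; \Z_p)$ with ${\rm HH}((S, Q) / (R\langle Q^\flat \rangle, Q^\flat); \Z_p)$ using the log HKR filtration (Theorem \ref{thm:loghkrfilt}). This reduces the comparison to showing that the two Gabber cotangent complexes $\L_{(S,Q)/R\langle Q^\flat\rangle}$ and $\L_{(S,Q)/(R\langle Q^\flat\rangle, Q^\flat)}$ agree $p$-completely. The transitivity cofiber sequence \ref{subsec:cotcx}.\eqref{eq:transitivity} identifies the discrepancy with $S \otimes_{R\langle Q^\flat\rangle} \L_{(R\langle Q^\flat\rangle, Q^\flat)/R\langle Q^\flat\rangle}$, which vanishes $p$-completely by Corollary \ref{cor:perfectvanishing} since $Q^\flat$ is a perfect monoid and $R\langle Q^\flat\rangle$ is a perfectoid ring.

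Second, I would pass from the base $(R\langle Q^\flat\rangle, Q^\flat)$ to its replete base change along the map $(R\langle Q^\flat\rangle, Q^\flat) \to (S,Q)$, namely $(R\langle Q^{\flat,\rm rep}\rangle, Q^{\flat,\rm rep})$. The hypothesis of Proposition \ref{prop:loghhrepletebasechange} is met because $(Q^\flat)^{\rm gp} \to Q^{\rm gp}$ is surjective; indeed, $Q^\flat \to Q$ itself is surjective as $Q$ is semiperfect. Applying that proposition to the identity map on $(R\langle Q^\flat\rangle, Q^\flat)$ with target $(S,Q)$ gives the identification ${\rm HH}((R\langle Q^{\flat,\rm rep}\rangle, Q^{\flat,\rm rep})/(R\langle Q^\flat\rangle, Q^\flat); \Z_p) \simeq R\langle Q^{\flat,\rm rep}\rangle$. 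Combining this with transitivity for log Hochschild homology (Remark \ref{rmk:properties_logHH}(1)) along the composite $(R\langle Q^\flat\rangle, Q^\flat) \to (R\langle Q^{\flat,\rm rep}\rangle, Q^{\flat,\rm rep}) \to (S,Q)$ then yields
\[
{\rm HH}((S,Q)/(R\langle Q^\flat\rangle, Q^\flat); \Z_p) \simeq {\rm HH}((S,Q)/(R\langle Q^{\flat,\rm rep}\rangle, Q^{\flat,\rm rep}); \Z_p).
\]

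Third, I would invoke strictness of the repletion map. By construction, $Q^{\flat,\rm rep} \to Q$ is exact, and as both monoids are integral, it is strict, as recorded in the discussion of \S\ref{subsec:repletion}. Logification invariance (Remark \ref{rmk:properties_logHH}(3)) then identifies
\[
{\rm HH}((S,Q)/(R\langle Q^{\flat,\rm rep}\rangle, Q^{\flat,\rm rep}); \Z_p) \simeq {\rm HH}(S/R\langle Q^{\flat,\rm rep}\rangle; \Z_p),
\]
completing the argument. I do not anticipate a serious obstacle: every step is furnished by results already collected in the paper. The most delicate point is the compatibility of the HKR filtration with derived $p$-completion in Step 1, handled along the lines of the proof of Corollary \ref{cor:loghhoddvanish}.
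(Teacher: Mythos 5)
Your proposal is correct and takes essentially the same route as the paper's own argument: Step 1 is exactly the combination of Theorem \ref{thm:loghkrfilt} and Corollary \ref{cor:perfectvanishing} invoked in the text; Step 2 is the paper's "replete base change plus transitivity" step spelled out in detail (the paper cites Proposition \ref{lem:transitivitylogthh} rather than Remark \ref{rmk:properties_logHH}(1), but these coincide for discrete pre-log rings); and Step 3 uses strictness of $Q^{\flat,\rm rep}\to Q$ and logification invariance exactly as the paper does.
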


The attentive reader will notice that Proposition \ref{prop:loghhvsordinaryhh} can be directly deduced from Theorem \ref{thm:loghkrfilt} and Proposition \ref{prop:qrqsrep} (the paragraph after which we refer to further discussion on this type of result).

\section{Logarithmic ${\rm THH}$ over perfectoid bases} We now work towards the proof of Theorem \ref{thm:logthhdefhh} by establishing analogs of the results of \cite[Section 6]{BMS19} in the log setting. As one consequence we obtain Proposition \ref{prop:filterlogthhcotcx}, which gives a filtration of the log ${\rm THH}$ of object in ${\rm lQRSPerfd}$, 
whose graded pieces can be described in terms of the ordinary (i.e.,  non-logarithmic) cotangent complex.

\subsection{Log ${\rm THH}$ of log quasismooth rings}  We work with a perfectoid base $R$. Let $u$ be a generator of  $\ker{\theta_R}/\ker\theta_R^2=R[1]$, where $\theta_R$ is again Fontaine's map. As observed in Lemma \ref{syntomic.8}, this gives a trivialization of $(\L_{R/\Z_p})^{\wedge}_p$, and  the HKR Theorem gives then an isomorphism $\ker{\theta_R}/\ker\theta_R^2 \cong \pi_2\HH(R;\Z_p)$. 
By \cite[Theorem 6.1]{BMS19}, this extends to an isomorphism of graded rings $\pi_*{\rm THH}(R;\Z_p) \cong R[u]$.   

\begin{lemma}\label{lem:logthhtohhcof} Let $(A, M)$ be an $R$-algebra. There is an $S^1$-equivariant cofiber sequence \[{\rm THH}((A, M) ; {\Bbb Z}_p)[2] \xrightarrow{u} {\rm THH}((A, M) ; {\Bbb Z}_p) \xrightarrow{} {\rm HH}((A, M) / R ; {\Bbb Z}_p),
\] and hence corresponding cofiber sequences on homotopy fixed points and Tate constructions. 
\end{lemma}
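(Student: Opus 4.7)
The plan is to deduce this directly from the non-logarithmic analog \cite[Theorem 6.7]{BMS19} by a base change argument, using the log transitivity property of Proposition~\ref{lem:transitivitylogthh}.

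First, I would invoke \cite[Theorem 6.7]{BMS19}, which provides an $S^1$-equivariant cofiber sequence of ${\rm THH}(R;\Z_p)$-modules
\[
{\rm THH}(R;\Z_p)[2] \xrightarrow{u} {\rm THH}(R;\Z_p) \longrightarrow R,
\]
where the first map is multiplication by the generator $u \in \pi_2{\rm THH}(R;\Z_p)$ and the second is the truncation map. This is available because $\pi_*{\rm THH}(R;\Z_p) \cong R[u]$ is concentrated in even degrees so that the class $u$ admits an $S^1$-equivariant lift.

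Next, since $(A,M)$ is an $R$-algebra (with $R$ carrying the trivial log structure), the unit of $R \to A$ induces a morphism ${\rm THH}(R;\Z_p) \to {\rm THH}((A,M);\Z_p)$ of $\mathbb{E}_\infty$-algebras in $S^1$-equivariant spectra. I would then apply the functor $- \otimes_{{\rm THH}(R;\Z_p)} {\rm THH}((A,M);\Z_p)$ to the cofiber sequence above. Relative tensor products preserve cofiber sequences and $S^1$-equivariance, yielding an $S^1$-equivariant cofiber sequence
\[
{\rm THH}((A,M);\Z_p)[2] \xrightarrow{u} {\rm THH}((A,M);\Z_p) \longrightarrow R \otimes_{{\rm THH}(R;\Z_p)} {\rm THH}((A,M);\Z_p),
\]
where by slight abuse $u$ denotes the image of the original class under the structure morphism.

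To identify the cofiber, I would apply Proposition~\ref{lem:transitivitylogthh} to the composite of pre-log ring spectra $(\mathbb{S},\{1\}) \to (R,\{1\}) \to (A,M)$; this is the relative analog of Corollary~\ref{cor:transitivitylogthh} and gives the identification
\[
R \otimes_{{\rm THH}(R;\Z_p)} {\rm THH}((A,M);\Z_p) \simeq {\rm HH}((A,M)/R;\Z_p).
\]
Substituting yields the desired cofiber sequence. The corresponding sequences on ${\rm TC}^-$ and ${\rm TP}$ follow by applying $(-)^{hS^1}$ and $(-)^{tS^1}$, both of which preserve fiber/cofiber sequences.

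The only genuine subtlety will be confirming that the base-change step really does inherit the $S^1$-equivariance from \cite[Theorem 6.7]{BMS19}; this reduces, however, to the fact that the map $R \to {\rm THH}((A,M);\Z_p)$ factors through ${\rm THH}(R;\Z_p)$ as $S^1$-equivariant spectra, which is automatic from the $R$-algebra structure on $(A,M)$. No new log-geometric input is required beyond the transitivity formula for log topological Hochschild homology already at our disposal.
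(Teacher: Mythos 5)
Your proof is correct and follows exactly the same route as the paper's own proof: base change of the $S^1$-equivariant cofiber sequence from \cite[Theorem 6.7]{BMS19} along ${\rm THH}(R;\Z_p)\to{\rm THH}((A,M);\Z_p)$, with the cofiber identified via the transitivity formula of Proposition~\ref{lem:transitivitylogthh}. The only cosmetic difference is that you write the third term of the base sequence as $R$ while the paper writes it as ${\rm HH}(R/R;\Z_p)$, which are of course the same thing.
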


\begin{proof} This follows by applying $- \otimes_{{\rm THH}(R ; {\Bbb Z}_p)} {\rm THH}((A, M) ; {\Bbb Z}_p)$ to the cofiber sequence \[{\rm THH}(R ; {\Bbb Z}_p)[2] \xrightarrow{u} {\rm THH}(R ; {\Bbb Z}_p) \xrightarrow{} {\rm HH}(R/R ; {\Bbb Z}_p)\] of \cite[Theorem 6.7]{BMS19} and Proposition \ref{lem:transitivitylogthh}. 
\end{proof}

By Lemma \ref{lem:logthhlinearization}, the canonical map\[{\rm THH}((A, M) ; {\Bbb Z}_p) \xrightarrow{} {\rm HH}((A, M) ; {\Bbb Z}_p)\] is an isomorphism in degrees less than or equal to two. In particular, we may mimic  \cite[Construction 6.8]{BMS19} to obtain a map \begin{equation}\label{derhamtothh}(\Omega^*_{(A, M) / {\Bbb Z}})^\wedge_p \xrightarrow{} \pi_*{\rm THH}((A, M) ; {\Bbb Z}_p)\end{equation}
of graded $A$-algebras. The following is then a version of \cite[Corollary 6.9]{BMS19}, 
with an analogous proof:

\begin{corollary}\label{cor:derhamtothh} The map \eqref{derhamtothh} linearizes to a map \[(\Omega^*_{(A, M) / {\Bbb Z}})^\wedge_p \otimes_R \pi_*{\rm THH}(R ; {\Bbb Z}_p) \xrightarrow{} \pi_*{\rm THH}((A, M) ; {\Bbb Z}_p)\] which is an isomorphism if $(A, M)$ is log quasismooth. \qed
\end{corollary}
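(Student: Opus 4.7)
The plan is to follow the strategy of \cite[Corollary 6.9]{BMS19}, substituting our log analogs at each step. The linearization itself is formal: by Proposition \ref{lem:transitivitylogthh}, $\pi_*{\rm THH}((A, M) ; {\Bbb Z}_p)$ carries the structure of a graded module over $\pi_*{\rm THH}(R ; {\Bbb Z}_p) \cong R[u]$, and the graded $A$-algebra map \eqref{derhamtothh} extends uniquely along this module structure to a map of graded $R[u]$-modules out of the free $R[u]$-module $(\Omega^*_{(A, M) / {\Bbb Z}})^\wedge_p \otimes_R R[u]$. To establish the isomorphism claim for log quasismooth $(A, M)$, I would combine two ingredients. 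First, the log HKR theorem (Theorem \ref{thm:loghkrfilt}) identifies $\pi_n{\rm HH}((A, M) / R ; {\Bbb Z}_p)$ with $(\Omega^n_{(A, M) / R})^\wedge_p$, which is a $p$-completely flat $A$-module in the log quasismooth case. Second, since $R$ is perfectoid, $({\Bbb L}_{R / {\Bbb Z}_p})^\wedge_p \simeq R[1]$, so the Gabber transitivity sequence applied to ${\Bbb Z}_p \to R \to (A, M)$, combined with the argument of Lemma \ref{lem:wedgepowersflat} for exterior powers, yields equivalences $(\Omega^n_{(A, M) / {\Bbb Z}})^\wedge_p \simeq (\Omega^n_{(A, M) / R})^\wedge_p$ after $p$-completion in every degree $n$.

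Next, I would analyze the long exact sequence associated to the cofiber sequence of Lemma \ref{lem:logthhtohhcof}:
\[\cdots \to \pi_{n-2}{\rm THH}((A, M); {\Bbb Z}_p) \xrightarrow{u} \pi_n{\rm THH}((A, M); {\Bbb Z}_p) \to \pi_n{\rm HH}((A, M) / R; {\Bbb Z}_p) \xrightarrow{\partial} \pi_{n-3}{\rm THH}((A, M); {\Bbb Z}_p) \to \cdots\]
and induct on $n$. The base case $n \leq 2$ follows from Lemma \ref{lem:logthhlinearization} together with the log HKR identification. For the inductive step, the source of the linearized map is a free $R[u]$-module on the $p$-completely flat $A$-modules $(\Omega^i_{(A, M) / {\Bbb Z}})^\wedge_p$, so $u$-multiplication on the source is manifestly injective. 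Combined with the inductive hypothesis and the identification of $\pi_n{\rm HH}((A,M)/R;{\Bbb Z}_p)$ with $(\Omega^n_{(A,M)/{\Bbb Z}})^\wedge_p$ from the previous paragraph, this will force the connecting homomorphism $\partial$ to vanish, so the long exact sequence decomposes into short exact sequences
\[0 \to \pi_{n-2}{\rm THH}((A, M); {\Bbb Z}_p) \xrightarrow{u} \pi_n{\rm THH}((A, M); {\Bbb Z}_p) \to (\Omega^n_{(A, M) / {\Bbb Z}})^\wedge_p \to 0\]
matching the evident splitting of the degree $n$ part of the source. A five-lemma comparison then concludes the inductive step.

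The main obstacle is the simultaneous verification of the injectivity of $u$-multiplication on $\pi_*{\rm THH}((A, M); {\Bbb Z}_p)$, which is intertwined with the isomorphism claim; in \cite[Corollary 6.9]{BMS19} this is handled by a careful induction exploiting the ${\rm THH}(R;{\Bbb Z}_p)$-algebra structure on ${\rm THH}((A,M);{\Bbb Z}_p)$. Since $u$-multiplication is compatible with the linearized map by construction, and the source is free over $R[u]$, once the base case and the cokernel identification are established as above, the same bootstrapping argument transports verbatim. The log HKR identification and the perfectoid reduction $(\Omega^*_{(A,M)/{\Bbb Z}})^\wedge_p \simeq (\Omega^*_{(A,M)/R})^\wedge_p$ are the only inputs specific to the logarithmic context.
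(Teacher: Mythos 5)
Your proposal is correct and follows the same route the authors intend: they mark this corollary $\square$ precisely because they are transporting the argument of \cite[Corollary 6.9]{BMS19}, and your plan --- formal linearization over $\pi_*\THH(R;\Bbb Z_p)\cong R[u]$, the log HKR identification of $\pi_*\HH((A,M)/R;\Bbb Z_p)$ with $(\Omega^*_{(A,M)/R})^\wedge_p$, the identification $(\Omega^n_{(A,M)/\Bbb Z})^\wedge_p\simeq(\Omega^n_{(A,M)/R})^\wedge_p$ from the Gabber transitivity sequence and $(\Bbb L_{R/\Bbb Z_p})^\wedge_p\simeq R[1]$, and the inductive splitting of the long exact sequence attached to Lemma \ref{lem:logthhtohhcof} using that $\pi_n\THH\to\pi_n\HH$ is surjective because it is a ring map hitting all of $\Omega^1$ in degree one --- is exactly the logarithmic substitution of the non-log steps. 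The only point worth making fully precise in a written version is the vanishing of the connecting maps: it follows from surjectivity of $\pi_*\THH((A,M);\Bbb Z_p)\to\pi_*\HH((A,M)/R;\Bbb Z_p)$ (a ring map onto an algebra generated in degree $1$, with the degree-$1$ part hit by Lemma \ref{lem:logthhlinearization}), rather than from the inductive hypothesis alone.
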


From this, we formally obtain an analog of \cite[Corollary 6.10]{BMS19}: The functor ${\rm THH}((-, -) ; {\Bbb Z}_p)$ on pre-log $R$-algebras whose underlying ring is $p$-complete admits a filtration with graded pieces shifts of wedge powers of the log cotangent complex $({\Bbb L}_{(-, -) / R})^\wedge_p$; the sum of the degree of the wedge power and the degree shift is even. 

This, in particular, gives rise to a proof of Theorem \ref{thm:logthhdefhh}, 
the setting and statement we recall here.

Let $(S, Q)$ be a log quasiregular semiperfectoid pre-log ring and let $R$ be a perfectoid ring surjecting to $S$. Consider the surjection $R\langle Q^\flat \rangle \to S$, where $Q^\flat$ is the tilt of $Q$.

\begin{theorem}\label{thm:logthhoneparameter} With notation as in the previous paragraph, there is a cofiber sequence \[{\rm THH}((S, Q) ; {\Bbb Z}_p)[2] \xrightarrow{u} {\rm THH}((S, Q) ; {\Bbb Z}_p) \xrightarrow{} {\rm HH}(S / R\langle Q^{\flat, \rm rep} \rangle ; {\Bbb Z}_p).\]
\end{theorem}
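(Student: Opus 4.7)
The plan is to apply Lemma \ref{lem:logthhtohhcof} relative to the enlarged perfectoid base $R\langle Q^\flat \rangle$, and then rewrite the cofiber term using Proposition \ref{prop:loghhvsordinaryhh}.

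First, I would observe that $R\langle Q^\flat \rangle$ is itself perfectoid: the tilt $Q^\flat$ is a perfect monoid, so by the earlier lemma asserting that the complete monoid ring $R\langle P \rangle$ is perfectoid when $R$ is perfectoid and $P$ is perfect, this follows immediately. In view of Remark \ref{rem:perfectoidtrivial}, we may treat $R\langle Q^\flat \rangle$ as a perfectoid ring equipped with the trivial log structure, and the canonical factorization $R \to R\langle Q^\flat \rangle \to S$ together with $Q^\flat \to Q$ exhibits $(S, Q)$ as a pre-log $R\langle Q^\flat \rangle$-algebra in the sense required to apply Lemma \ref{lem:logthhtohhcof}.

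Next, I would apply Lemma \ref{lem:logthhtohhcof} with $R\langle Q^\flat \rangle$ in place of $R$. This yields the $S^1$-equivariant cofiber sequence
\[
{\rm THH}((S, Q) ; {\Bbb Z}_p)[2] \xrightarrow{u} {\rm THH}((S, Q) ; {\Bbb Z}_p) \xrightarrow{} {\rm HH}((S, Q) / R\langle Q^\flat \rangle ; {\Bbb Z}_p),
\]
where $u$ is a generator of $\pi_2{\rm THH}(R\langle Q^\flat \rangle ; {\Bbb Z}_p) \cong R\langle Q^\flat \rangle$. This choice of $u$ is consistent with the original one, as it may be taken to be the image of a generator of $\pi_2 {\rm THH}(R; {\Bbb Z}_p)$ under the map of perfectoids $R \to R\langle Q^\flat \rangle$; both elements correspond to generators of $\ker(\theta)/\ker(\theta)^2$ for the respective perfectoid rings.

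The identification of the cofiber term is then an immediate application of Proposition \ref{prop:loghhvsordinaryhh}, providing the canonical equivalence
\[
{\rm HH}((S, Q) / R\langle Q^\flat \rangle ; {\Bbb Z}_p) \simeq {\rm HH}(S / R\langle Q^{\flat, {\rm rep}}\rangle ; {\Bbb Z}_p),
\]
which finishes the proof. There is no serious obstacle here: the substantive input has already been absorbed into Proposition \ref{prop:loghhvsordinaryhh}, whose proof relied in turn on replete base change for the Gabber cotangent complex (Lemma \ref{lem:repletebasechange}), the transitivity formula of Proposition \ref{lem:transitivitylogthh}, and the vanishing result of Corollary \ref{cor:perfectvanishing}. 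Once these prior results are in hand, Theorem \ref{thm:logthhoneparameter} is a clean two-step deduction, whose only real content is the recognition that the perfectoid base may profitably be enlarged from $R$ to $R\langle Q^\flat \rangle$ before invoking the cofiber sequence of Lemma \ref{lem:logthhtohhcof}.
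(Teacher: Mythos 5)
Your proof is correct and follows essentially the same route as the paper: apply Lemma \ref{lem:logthhtohhcof} over the perfectoid base $R\langle Q^\flat \rangle$, then identify the cofiber term via Proposition \ref{prop:loghhvsordinaryhh}. The additional remarks you include (on $R\langle Q^\flat \rangle$ being perfectoid and the compatibility of the generator $u$) are sound, and simply make explicit background that the paper treats as understood from the surrounding discussion.
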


\begin{proof} 
Applying Lemma \ref{lem:logthhtohhcof} to the case at hand, we obtain a cofiber sequence \[{\rm THH}((S, Q) ; {\Bbb Z}_p)[2] \xrightarrow{u} {\rm THH}((S, Q) ; {\Bbb Z}_p) \xrightarrow{} {\rm HH}((S, Q) / R\langle Q^\flat \rangle ; {\Bbb Z}_p).\] Now we apply Proposition \ref{prop:loghhvsordinaryhh} to the cofiber term ${\rm HH}((S, Q) / R \langle Q^\flat \rangle ; {\Bbb Z}_p)$ to identify it with the ordinary Hochschild homology ${\rm HH}(S / R\langle Q^{\flat, \rm rep} \rangle ; {\Bbb Z}_p)$, which gives the desired result. 
\end{proof}

\begin{remark}\label{rem:weakpostnikov} To make the slogan that log topological Hochschild homology is a one-parameter deformation of ordinary Hochschild homology slightly more precise, observe that \[\{\tau_{\le n} {\rm THH}(R\langle Q^\flat \rangle ; {\Bbb Z}_p) \otimes_{{\rm THH}(R\langle Q^\flat \rangle ; {\Bbb Z}_p)} {\rm THH}((S, Q) ; {\Bbb Z}_p)\}\] is a \emph{weak Postnikov tower} in the sense of \cite[Section 3]{BMS19}, which can be proved using the techniques therein. If lifting problems along this tower could be phrased in terms of (logarithmic) deformation theory, one could attempt to use this to establish a universal property for logarithmic topological Hochschild homology, as alluded to in the introduction.  
\end{remark}

\begin{remark}\label{rmk:comment_Qflat_to_Q_exact}
    Note that if $Q^\flat \to Q$ was already exact, then $R\langle Q^{\flat, \rm rep} \rangle =  R\langle Q^{\flat} \rangle$ would be a perfectoid ring, and by \cite[Theorem 6.7]{BMS19}, ${\rm HH}(S / R\langle Q^{\flat, \rm rep} \rangle ; {\Bbb Z}_p)$ would deform to the non-log version ${\rm THH}(S ; {\Bbb Z}_p)$. In particular, the canonical map ${\rm THH}(S ; {\Bbb Z}_p) \to {\rm THH}((S, Q) ; {\Bbb Z}_p)$ is an equivalence in this case.
\end{remark}
\begin{proposition}\label{prop:filterlogthhcotcx} With the notation of the previous paragraph, the log topological Hochschild homology ${\rm THH}((S, Q) ; {\Bbb Z}_p)$ admits a complete descending filtration with graded pieces \[ \bigoplus_{\substack{0\leq i\leq n\\ i-n\ \mathrm{even}}} (\wedge^i_S {\Bbb L}_{S / R\langle Q^{\flat, \rm rep} \rangle})^\wedge_p[n],\] where $Q^{\flat, \rm rep} \to Q$ denotes the repletion of the canonical map $Q^\flat \to Q$. 
\end{proposition}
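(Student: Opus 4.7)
The strategy is to apply the general filtration on $\THH((-,-);\Z_p)$ from the ``analog of \cite[Corollary 6.10]{BMS19}'' mentioned after Corollary \ref{cor:derhamtothh}, but taken over the perfectoid base $R' := R\langle Q^\flat \rangle$ rather than $R$. Since $Q^\flat$ is a perfect monoid, the lemma in the ``(Semi)perfect monoids'' subsection gives that $R'$ is perfectoid, so all developments of this section apply with $R$ replaced by $R'$; in particular $(S,Q)$ becomes a pre-log $R'$-algebra, with $R'$ carrying the trivial log structure (cf.\ Remark \ref{rem:perfectoidtrivial}).

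In the log quasismooth case, Corollary \ref{cor:derhamtothh} yields the explicit bigraded identification $\pi_*\THH((A,M);\Z_p) \cong (\Omega^*_{(A,M)/R'})^\wedge_p \otimes_{R'} R'[u]$ with $u$ in homological degree $2$. The filtration in question is then the associated total-degree filtration with total degree $n = i + 2j$ (exterior degree $i$ plus twice the polynomial degree $j$); at level $n$ its graded piece is $\bigoplus_{0\leq i \leq n,\; i \equiv n\,(\mathrm{mod}\,2)} \Omega^i_{(A,M)/R'}[n]$. Left Kan extension to arbitrary pre-log $R'$-algebras with $p$-complete underlying ring — which commutes with formation of derived wedge powers of the Gabber cotangent complex — upgrades this to a complete descending filtration on $\THH((-,-);\Z_p)$ whose $n$-th graded piece is $\bigoplus_{0\leq i\leq n,\; i-n\ \mathrm{even}} (\wedge^i \L_{(-,-)/R'})^\wedge_p[n]$. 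Evaluating at $(S,Q)$ and invoking Proposition \ref{prop:qrqsrep} to identify $\L_{(S,Q)/R\langle Q^\flat \rangle}$ with $\L_{S/R\langle Q^{\flat,\mathrm{rep}}\rangle}$ after $p$-completion (an identification preserved by derived wedge powers) produces exactly the filtration claimed in the proposition.

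The main point requiring care is upgrading the phrasing of the ``analog of \cite[Corollary 6.10]{BMS19}'' from the text, which records only the overall shape ``shifts of wedge powers'' of the graded pieces, to the precise total-degree filtration above. This is formal: both the filtered object $\{F^n\THH((-,-);\Z_p)\}_n$ and its candidate associated graded $\{\bigoplus_i (\wedge^i \L_{(-,-)/R'})^\wedge_p[n]\}_n$ are left Kan extended from polynomial pre-log $R'$-algebras, where the comparison is immediate from Corollary \ref{cor:derhamtothh}. As a more concrete alternative, one could instead combine the $u$-adic filtration arising from the cofiber sequence of Theorem \ref{thm:logthhoneparameter} with the HKR filtration (Theorem \ref{thm:loghkrfilt}) on the cofiber term $\HH(S/R\langle Q^{\flat,\mathrm{rep}}\rangle;\Z_p)$ and take the total filtration indexed by $m = 2n + i$, which likewise produces the claimed graded pieces and is complete by completeness of each constituent.
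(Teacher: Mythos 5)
Your proposal is correct and follows exactly the same two-step route as the paper's own proof: apply the analog of \cite[Corollary 6.10]{BMS19} over the perfectoid base $R\langle Q^\flat\rangle$ (which is perfectoid because $Q^\flat$ is perfect) to get graded pieces involving $\L_{(S,Q)/R\langle Q^\flat\rangle}$, then invoke Proposition \ref{prop:qrqsrep} to replace this with $\L_{S/R\langle Q^{\flat,\mathrm{rep}}\rangle}$. Your expansion of what the ``analog of Corollary 6.10'' means (the total-degree filtration via left Kan extension) and the alternative via the cofiber sequence of Theorem \ref{thm:logthhoneparameter} plus HKR on the cofiber term are both sound elaborations that the paper leaves implicit.
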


\begin{proof}  Since $R\langle Q^\flat \rangle$ is a perfectoid ring, the analog of \cite[Corollary 6.10]{BMS19} described above applies to give a filtration with graded pieces  \[ \bigoplus_{\substack{0\leq i\leq n\\ i-n\ \mathrm{even}}} (\wedge^i_S {\Bbb L}_{(S, Q) / R\langle Q^\flat \rangle})^\wedge_p[n].\] Now the result follows from Proposition \ref{prop:qrqsrep}.  
\end{proof}

\begin{remark}
    Note that the isomorphism $\pi_* \THH(R;\Z_p)$ for $R$ a perfectoid ring  can be interpreted as a generalization of B\"okstedt fundamental computation $\pi_* \THH(\F_p ; \Z_p) \cong \F_p[u]$, where $u$ is a generator of $\pi_2\THH(\F_p;\Z_p)$. In fact, the proof in \cite{BMS19} relies on B\"okstedt's computation, in the sense that it is ultimately reduced to the case $R=\F_p$ for $R$ in characteristic $p>0$ (followed by a long devissage in mixed characteristic). Calling the generator $u$ a Bott element would be tempting, 
    but see \cite[Section 1.2]{Hess_Nik}.  
\end{remark}

\begin{remark}[On Breuil--Kisin twists]\label{rem:breuilkisin} As explained in Remark \ref{rem:perfectoidtrivial}, we shall typically consider perfectoid rings $R$ with trivial pre-log structure. In particular, there is a natural isomorphism \[\pi_*{\rm TP}(R; {\Bbb Z}_p) = \bigoplus_{i \in I} A_{\rm inf}\{i\},\] see \cite[Section 6.2]{BMS19}, and we shall follow the same conventions as \cite{BMS19} with regards to (omitting) Breuil--Kisin twists. 
\end{remark}

\section{Filtrations and log prismatic cohomology} We now aim to prove Theorem \ref{thm:mainthm} from the introduction.

\subsection{Evenness of log ${\rm THH}$ of quasiregular semiperfectoids} Let $R$ be a perfectoid ring. As a consequence of our work so far, we obtain analogs of \cite[Theorems 7.1 and 7.2]{BMS19} with similar proofs. For the convenience of the reader, we choose to spell this out here: 

\begin{theorem}\label{thm:bms19thm71} Let $(S, Q)$ be  in ${\rm lQRSPerfd}_R$. 
\begin{enumerate}
\item The homotopy groups $\pi_*{\rm THH}((S, Q) ; {\Bbb Z}_p)$ are concentrated in even degrees. 
\item Multiplication by $u \in \pi_2{\rm THH}(R ; {\Bbb Z}_p)$ induces an injection \[\pi_{2i - 2}{\rm THH}((S, Q) ; {\Bbb Z}_p) \xrightarrow{u} \pi_{2i}{\rm THH}((S, Q) ; {\Bbb Z}_p).\]
\item Consider $\pi_0{\rm THH}((S, Q) ; {\Bbb Z}_p)[u^{\pm 1}]$ as a  filtered commutative $R$-algebra, equipped with an increasing filtration. Then there is a canonical isomorphism \[(\Gamma_S^* (\pi_1{\Bbb L}_{(S, Q) / R})^\wedge_p)^\wedge_p \cong {\rm gr}_*(\pi_0 {\rm THH}((S, Q) ; {\Bbb Z}_p)[u^{\pm 1}])\] of graded rings. On the left-hand side, the first $p$-completion is of an $S$-module, the second of a graded ring. 
\item Each even homotopy group $\pi_{2i}{\rm THH}((S, Q) ; {\Bbb Z}_p)$ is $p$-completely flat over $S$. 
\end{enumerate}
\end{theorem}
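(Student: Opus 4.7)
The strategy is to apply the cofiber sequence of Theorem \ref{thm:logthhoneparameter}, and then to mimic the argument of \cite[Theorem 7.1]{BMS19} essentially verbatim, with the non-logarithmic cotangent complex $\L_{S/R\langle Q^{\flat,\rep}\rangle}$ replacing the one used in loc.~cit. The key observation that makes the argument go through is the identification $\L_{(S,Q)/R\langle Q^\flat\rangle} \simeq \L_{S/R\langle Q^{\flat,\rep}\rangle}$ of Proposition \ref{prop:qrqsrep}, together with the $p$-complete flatness of the shifted wedge powers of this cotangent complex (Lemma \ref{lem:wedgepowersflat} applied with perfectoid base $R\langle Q^\flat\rangle$, which is legitimate thanks to Remark \ref{rem:perfectoidbase}).

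First I would analyze the cofiber term $\HH(S/R\langle Q^{\flat,\rep}\rangle;\Z_p)$. By Proposition \ref{prop:loghhvsordinaryhh} it agrees with the log Hochschild homology $\HH((S,Q)/R\langle Q^\flat\rangle;\Z_p)$, whose log HKR filtration (Theorem \ref{thm:loghkrfilt}) has graded pieces $(\wedge^i_S \L_{(S,Q)/R\langle Q^\flat\rangle})^\wedge_p[i]$. Each of these is $p$-completely flat in degree $i$ by Lemma \ref{lem:wedgepowersflat}, so the spectral sequence collapses in bidegree $(i,i)$ and gives that $\pi_{2i+1}\HH(S/R\langle Q^{\flat,\rep}\rangle;\Z_p) = 0$ and $\pi_{2i}\HH(S/R\langle Q^{\flat,\rep}\rangle;\Z_p) \cong (\wedge^i_S\L_{S/R\langle Q^{\flat,\rep}\rangle}[-i])^\wedge_p$ (after invoking Proposition \ref{prop:qrqsrep}). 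With this computation of the cofiber term in hand, the long exact sequence associated to the cofiber sequence of Theorem \ref{thm:logthhoneparameter} reads, for each integer $n$,
\[
\pi_{n-2}\THH((S,Q);\Z_p) \xrightarrow{u} \pi_n\THH((S,Q);\Z_p) \to \pi_n\HH(S/R\langle Q^{\flat,\rep}\rangle;\Z_p) \to \pi_{n-3}\THH((S,Q);\Z_p).
\]

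Parts (1) and (2) now follow by a simultaneous induction on $n$. Since $\THH((S,Q);\Z_p)$ is connective and its cofiber term is concentrated in even degrees, the long exact sequence gives $\pi_n\THH((S,Q);\Z_p) = 0$ for odd $n$ as soon as the same holds for $n-2$, and base case $n=1$ is immediate from connectivity. Given (1), the long exact sequence in odd degrees $n+1$ shows that the connecting map $\pi_{n+1}\HH \to \pi_{n-2}\THH$ vanishes for $n$ even, which yields the injectivity in (2) and produces short exact sequences
\[
0 \to \pi_{2i-2}\THH((S,Q);\Z_p) \xrightarrow{u} \pi_{2i}\THH((S,Q);\Z_p) \to \pi_{2i}\HH(S/R\langle Q^{\flat,\rep}\rangle;\Z_p) \to 0.
\]
Part (4) is then an immediate induction on $i$: $\pi_0\THH((S,Q);\Z_p) = S$ is flat, and each $\pi_{2i}\HH(S/R\langle Q^{\flat,\rep}\rangle;\Z_p)$ is $p$-completely flat over $S$ by the computation of the preceding paragraph, so extensions preserve $p$-complete flatness.

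For part (3), the short exact sequences above say exactly that the increasing filtration on $\pi_0\THH((S,Q);\Z_p)[u^{\pm 1}]$ by the images of multiplication by powers of $u$ has $i$-th graded piece $\pi_{2i}\HH(S/R\langle Q^{\flat,\rep}\rangle;\Z_p) \cong (\wedge^i_S\L_{S/R\langle Q^{\flat,\rep}\rangle}[-i])^\wedge_p$. Using the Illusie identity $\wedge^i_S M[-i] \simeq \Gamma^i_S(M[-1])$ valid for any $S$-module spectrum $M$, applied to $M = \L_{S/R\langle Q^{\flat,\rep}\rangle}$ whose shift by $-1$ is $p$-completely discrete equal to $\pi_1\L_{(S,Q)/R\langle Q^\flat\rangle}^\wedge_p = \pi_1\L_{S/R\langle Q^{\flat,\rep}\rangle}^\wedge_p$, one identifies each graded piece with $\Gamma^i_S(\pi_1\L_{(S,Q)/R\langle Q^\flat\rangle})^\wedge_p$. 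The main obstacle is to verify that the $\Gamma^i$'s assemble into the predicted graded ring structure compatibly with the multiplicative structure on $\THH$; this is the genuine content of (3) and is handled exactly as in \cite[Theorem 7.1(3)]{BMS19}, since their argument only uses the HKR multiplicativity and the evenness already established here.
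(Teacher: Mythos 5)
Your proposal is correct and follows essentially the same strategy as the paper: the cofiber sequence of $u$-multiplication on $\THH((S,Q);\mathbb{Z}_p)$, the log HKR analysis of the cofiber term, the d\'ecalage identity relating wedge powers to divided powers, and $p$-complete flatness from Lemma~\ref{lem:wedgepowersflat}. The only departures are cosmetic: the paper establishes part (1) via the complete even filtration on $\THH$ rather than by induction up the long exact sequence, and it works directly with $\HH((S,Q)/R)$ via Lemma~\ref{lem:logthhtohhcof} rather than through the reformulation $\HH(S/R\langle Q^{\flat,\mathrm{rep}}\rangle)$ of Theorem~\ref{thm:logthhoneparameter}; if you take the latter route, you should also record that $(\mathbb{L}_{(S,Q)/R})^\wedge_p \simeq (\mathbb{L}_{(S,Q)/R\langle Q^\flat\rangle})^\wedge_p$ (transitivity plus $(\mathbb{L}_{R\langle Q^\flat\rangle/R})^\wedge_p\simeq 0$ for a map of perfectoids) to match the form of the cotangent complex in the statement of part (3).
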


\begin{proof} We first prove (1). As remarked after the proof of Corollary \ref{cor:derhamtothh}, the log topological Hochschild homology ${\rm THH}((S, Q) ; {\Bbb Z}_p)$ admits a complete filtration with graded pieces a sum of shifts of wedge powers of the log cotangent complex $({\Bbb L}_{(S, Q) / R})^{\wedge}_p$; moreover, the sum of the degree of the shift and the wedge power is even. By Lemma \ref{lem:wedgepowersflat}, each wedge power $(\wedge_S^i {\Bbb L}_{(S, Q) / R})^{\wedge}_p$ has $p$-complete ${\rm Tor}$-amplitude in (homological) degree $i$, and hence it lives in degree $i$ by \cite[Lemma 4.7]{BMS19}. Hence the associated graded of the complete filtration of ${\rm THH}((S, Q) ; {\Bbb Z}_p)$ lives in even degrees, which proves (1).

We now prove (2) and (3). Consider the cofiber sequence \[{\rm THH}((S, Q) ; {\Bbb Z}_p)[2] \xrightarrow{u} {\rm THH}((S, Q) ; {\Bbb Z}_p) \xrightarrow{} {\rm HH}((S, Q) / R) ; {\Bbb Z}_p)\] of Lemma \ref{lem:logthhtohhcof}. The associated long exact sequence splits into short exact sequences \[0 \xrightarrow{} \pi_{2i - 2} {\rm THH}((S, Q) ; {\Bbb Z}_p) \xrightarrow{u} \pi_{2i} {\rm THH}((S, Q) ; {\Bbb Z}_p) \xrightarrow{} \pi_{2i} {\rm HH}((S, Q) / R ; {\Bbb Z}_p) \to 0\] since both $\pi_*{\rm THH}((S, Q) ; {\Bbb Z}_p)$ and $\pi_*{\rm HH}((S, Q) / R ; {\Bbb Z}_p)$ live in even degrees. This proves (2), and part (3) follows from the isomorphism $\pi_{2i} {\rm HH}((S, Q) / R ; {\Bbb Z}_p) \cong \pi_i(\wedge_S^i {\Bbb L}_{(S, Q) / R})^\wedge_p$ from Corollary \ref{cor:loghhoddvanish}, as well as the isomorphism $\pi_i(\wedge_S^i {\Bbb L}_{(S, Q) / R})^\wedge_p \cong (\Gamma_S^i \pi_1 ({\Bbb L}_{(S, Q) / R})^\wedge_p)^\wedge_p$ (note that the right-hand side is discrete since the cotangent complex is a $p$-completely flat module after $p$-completion). 

Since $ (\Gamma_S^i \pi_1 ({\Bbb L}_{(S, Q) / R})^\wedge_p)^\wedge_p$ is $p$-completely flat (see \cite[Proof of Theorem 7.1]{BMS19}), this also proves part (4). 
\end{proof}

\subsection{Definition of Nygaard-complete log prismatic cohomology}\label{subsec:nygaardcomplete} As a consequence of Theorem \ref{thm:bms19thm71}, for a log quasiregular semiperfectoid $R$-algebra $(S, Q)$, the homotopy fixed points and Tate spectral seqences computing $\pi_*{\rm TC}^-((S, Q) ; {\Bbb Z}_p)$ and $\pi_*{\rm TP}((S, Q) ; {\Bbb Z}_p)$ degenerate. This equips the \emph{Nygaard-complete log prismatic cohomology} \[\widehat{\Prism}_{(S, Q)} := \pi_0{\rm TC}^-((S, Q) ; {\Bbb Z}_p) \cong \pi_0{\rm TP}((S, Q) ; {\Bbb Z}_p)\] with a complete, descending filtration ${\rm Fil}_N^{\ge \bullet} \widehat{\Prism}_{(S, Q)}$, that we call the \emph{logarithmic Nygaard filtration}. The associated graded ${\rm gr}_N^i \widehat{\Prism}_{(S, Q)}$ identifies with log topological Hochschild homology 
\[{\rm gr}_N^i \widehat{\Prism}_{(S, Q)}\cong \pi_{2i} {\rm THH}((S, Q) ; {\Bbb Z}_p),\] and the $i$th filtration level ${\rm Fil}^{\ge 2i}_N \widehat{\Prism}_{(S, Q)}$ identifies with $\pi_{2i} {\rm TC}^-((S, Q) ; {\Bbb Z}_p)$, where the inclusion to $\widehat{\Prism}_{(S, Q)} := \pi_0{\rm TC}^-((S, Q) ; {\Bbb Z}_p)$ identifies with multiplication by $v^i \in \pi_{-2i} {\rm TC}^-(R; {\Bbb Z}_p)$. As in \cite[Theorem 7.2(4), (5)]{BMS19},  the cyclotomic Frobenius from log ${\rm TC}^{-}$ to log ${\rm TP}$ gives rise to divided Frobenii \[\varphi_{S, i} \colon {\rm Fil}^{\ge i}_N \widehat{\Prism}_{(S, Q)} \to \widehat{\Prism}_{(S, Q)},\] and there is a natural isomorphism $\widehat{\Prism}_{(S, Q)}/\xi \cong \widehat{L\Omega}_{(S, Q) / R}$.

The above paragraph summarizes the log version of \cite[Theorem 7.2]{BMS19}. 

\begin{remark}[Relationship with Koshikawa--Yao]\label{rem:koshikawayao} Let $S$ be a quasiregular semiperfectoid ring. Recall that Bhatt--Scholze \cite{BS22} defines prismatic cohomology $\Prism_S$ via the prismatic site. 
They prove \cite[Theorem 13.1]{BS22} that a canonical comparison map $\Prism_S \to \widehat{\Prism}_S$ exhibits the target as the Nygaard completion of the source, where the Nygaard filtration is defined directly on $\Prism_S$. 

Koshikawa \cite{Kos22} introduced the log prismatic site. In work with Yao \cite{KY}, they define a certain Nygaard filtration on (a derived variant of) log prismatic cohomology $\Prism_{(S, Q)}$ for $(S, Q)$ a log quasiregular semiperfectoid pre-log ring. We now review one key idea for their setup and explain how it translates into the present paper's language.  

Recall that, if $S$ is a quasiregular semiperfectoid ring, then the prismatic cohomology $\Prism_{S/A_{\rm inf}(R)} := \Prism_{S/(A_{\rm inf}(R), {\rm ker}(\theta))}$ is initial \cite[Proposition 7.10]{BS22}, which gives rise to a comparison map $\Prism_S \to \widehat{\Prism}_S$. 

In the logarithmic setting, we may consider the object $\Prism_{(S, Q) / A_{{\rm inf}}(R)}$, and try to define and study a Nygaard filtration on it in hopes of obtaining an analog of Bhatt--Scholze's comparison result.

For this, it seems that Koshikawa--Yao are using techniques similar to our Proposition \ref{prop:qrqsrep}, Proposition \ref{prop:loghhvsordinaryhh}, and Theorem \ref{thm:logthhoneparameter} to describe their log prismatic cohomology as ordinary prismatic cohomology, at the price of working with a non-perfect base prism. For this reason, we believe that our Theorem \ref{thm:logthhoneparameter} and the surrounding techniques will be the key ingredient in comparing our Nygaard-complete log prismatic cohomology with that considered by Koshikawa--Yao, and we hope to elaborate upon this in the future. 
\end{remark}

\subsection{Unfolding $\pi_{2i}{\rm THH}((-, -) ; {\Bbb Z}_p)$} The following is  analogous to \cite[Construction 7.4, Proposition 7.5]{BMS19}. Let $R$ be a perfectoid ring and $(S, Q)$ be a log quasiregular semiperfectoid $R$-algebra. By Theorem \ref{thm:bms19thm71}, the $S$-module $\pi_{2i}{\rm THH}((S, Q) ; {\Bbb Z}_p)$ admits a finite filtration with graded pieces shifts of wedge powers of the log cotangent complex $({\Bbb L}_{(S, Q) / R})^{\wedge}_p$. In particular, Theorem \ref{thm:gabberdescent} implies that the presheaf \[{\rm lQRSPerfd}_R \to D(R), \quad (S, Q) \mapsto \pi_{2i}{\rm THH}((S, Q) ; {\Bbb Z}_p)\] is a sheaf. By unfolding, we obtain a sheaf \[{\rm lQSyn}_R \to D(R), \quad (S, Q) \mapsto \pi_{2i}{\rm THH}((S, Q) ; {\Bbb Z}_p)^\sqsupset,\] and so we get the following analog of \cite[Proposition 7.5]{BMS19}:

\begin{proposition}\label{prop:unfoldinglogthh} Let $(A, M)$ be a log quasisyntomic $R$-algebra. Then the log topological Hochschild homology ${\rm THH}((A, M) ; {\Bbb Z}_p)$ admits a complete, descending, $S^1$-equivariant filtration such that each graded piece ${\rm gr}^i {\rm THH}((A, M) ; {\Bbb Z}_p)$ (with trivial $S^1$-action) itself admits a filtration with graded pieces $(\wedge_A^j {\Bbb L}_{(A, M) / R})^\wedge_p[2i - j]$, where $0 \le j \le i$. 
\end{proposition}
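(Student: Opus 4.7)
The plan is to construct the claimed filtration by unfolding the double-speed Postnikov filtration from $\mathrm{lqrsPerfd}_R$ to $\mathrm{lqSyn}_R$, following the strategy of \cite[Proposition 7.5]{BMS19}. The necessary inputs are now in place: evenness of $\pi_*\THH((S,Q);\Z_p)$ on $\mathrm{lqrsPerfd}_R$ (Theorem \ref{thm:bms19thm71}(1)), the $u$-divisibility short exact sequences (Lemma \ref{lem:logthhtohhcof} combined with Theorem \ref{thm:bms19thm71}(2)), the identification $\pi_{2j}\HH((S,Q)/R;\Z_p) \simeq (\wedge^j_S \L_{(S,Q)/R})^\wedge_p[-j]$ for $(S,Q)$ log quasiregular semiperfectoid (Corollary \ref{cor:loghhoddvanish}), and the sheaf property of $(S,Q) \mapsto \pi_{2i}\THH((S,Q);\Z_p)$ established just before the statement.

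On $\mathrm{lqrsPerfd}_R$, evenness forces the double-speed Postnikov filtration $\{\tau_{\geq 2i}\THH((-,-);\Z_p)\}_{i\geq 0}$ to be $S^1$-equivariant with graded pieces $\pi_{2i}\THH((-,-);\Z_p)[2i]$ carrying trivial $S^1$-action (the connected group $S^1$ cannot act nontrivially on a complex concentrated in a single degree). A standard Postnikov argument, using that each $\pi_{2i}\THH((-,-);\Z_p)$ is a sheaf and that $\THH((-,-);\Z_p)$ is itself a sheaf by Proposition \ref{prop:logthhflatdescent}, shows that every truncation $\tau_{\geq 2i}\THH((-,-);\Z_p)$ is a sheaf on $\mathrm{lqrsPerfd}_R$. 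I would then unfold via Theorem \ref{thm:unfolding} and set
\[
{\rm Fil}^i \THH((A,M);\Z_p) := R\Gamma_{\rm lqsyn}((A,M), \tau_{\geq 2i}\THH((-,-);\Z_p)^{\sqsupset}),
\]
which recovers $\THH((A,M);\Z_p)$ for $i=0$ by descent and is complete by the increasing connectivity of the truncations. The associated graded is ${\rm gr}^i\THH((A,M);\Z_p) \simeq R\Gamma_{\rm lqsyn}((A,M), \pi_{2i}\THH((-,-);\Z_p)^{\sqsupset})[2i]$.

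For the subfiltration on each graded piece, I would combine Lemma \ref{lem:logthhtohhcof} with Theorem \ref{thm:bms19thm71}(2) to extract, on $\mathrm{lqrsPerfd}_R$, the short exact sequences
\[
0 \to \pi_{2i-2}\THH((-,-);\Z_p) \xrightarrow{u} \pi_{2i}\THH((-,-);\Z_p) \to \pi_{2i}\HH((-,-)/R;\Z_p) \to 0.
\]
Iterating, the chain of subsheaves $u^k\pi_{2(i-k)}\THH \subseteq \pi_{2i}\THH$ for $k = i, i-1, \ldots, 0$ yields a functorial finite filtration of length $i+1$ with successive quotients the $p$-completed wedge powers $(\wedge^j_S \L_{(S,Q)/R})^\wedge_p[-j]$ for $j = 0,1,\ldots,i$, via Corollary \ref{cor:loghhoddvanish}. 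Each such quotient is a sheaf on $\mathrm{lqrsPerfd}_R$ by flat descent for wedge powers of the Gabber cotangent complex (Theorem \ref{thm:gabberdescent}), so the entire filtration unfolds coherently via Theorem \ref{thm:unfolding}. After the overall shift by $[2i]$ coming from the outer filtration, one reads off the claimed graded pieces $(\wedge^j_A \L_{(A,M)/R})^\wedge_p[2i-j]$ for $0 \leq j \leq i$. The main obstacle is not conceptual but bookkeeping: one must check that the $u$-filtration on $\pi_{2i}\THH$ consists of honest subsheaves (immediate from the exactness above) and that the two filtrations (Postnikov and $u$-adic) interact compatibly under the unfolding, preserving the $S^1$-equivariance of the outer filtration throughout.
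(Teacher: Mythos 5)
Your proposal matches the paper's argument: on $\mathrm{lqrsPerfd}_R$ one takes the double-speed Postnikov filtration, identifies the inner filtration of each $\pi_{2i}\THH$ with the $u$-adic one coming from iterating the short exact sequences of Theorem \ref{thm:bms19thm71}(2) (equivalently Lemma \ref{lem:logthhtohhcof}) with quotients computed by Corollary \ref{cor:loghhoddvanish}, checks sheafiness using Theorem \ref{thm:gabberdescent} and Proposition \ref{prop:logthhflatdescent}, and unfolds via Theorem \ref{thm:unfolding}. You spell out more bookkeeping (triviality of the $S^1$-action on the single-degree graded pieces, compatibility of the two filtrations under unfolding) than the paper's one-line proof, but it is the same route.
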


\begin{proof} If $(A, M)$ is a log quasiregular semiperfectoid pre-log ring, the desired filtration is the double speed Postnikov filtration. In general, we unfold and apply Theorem \ref{thm:gabberdescent} and Proposition \ref{prop:logthhflatdescent}.  
\end{proof}

\subsection{Unfolding $\pi_0{\rm TC}^-((-, -) ; {\Bbb Z}_p)$}  By the discussion after the proof of Theorem \ref{thm:bms19thm71}, there is a sheaf \[{\rm lQRSPerd}_R \to \widehat{{\rm DF}}(A_{\rm inf}(R)), \quad (S, Q) \mapsto (\widehat{\Prism}_{(S, Q)}, {\rm Fil}_N^{\ge \bullet} \widehat{\Prism}_{(S, Q)})\] which unfolds to a sheaf $(\widehat{\Prism}_{(-, -)}, {\rm Fil}_N^{\ge \bullet} \widehat{\Prism}_{(-, -)})$ defined on all log quasisyntomic $R$-algebras. As in \cite[Proposition 7.8]{BMS19}, we have:

\begin{proposition} Let $(A, M)$ be a log quasisyntomic $R$-algebra. Each graded piece ${\rm gr}_N^i \widehat{\Prism}_{(A, M)}$ admits a finite filtration with graded pieces given by $(\wedge_A^j {\Bbb L}_{(A, M) / R})^\wedge_p [-j]$ for $0 \le j \le i$. 
\end{proposition}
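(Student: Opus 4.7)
The plan is to deduce the statement essentially formally from Proposition \ref{prop:unfoldinglogthh}, together with the identification ${\rm Fil}_N^i \widehat{\Prism}_{(S, Q)} \cong \pi_{2i}\THH((S, Q); \Z_p)$ available in the log quasiregular semiperfectoid case. The strategy is to first unfold both sides to the log quasisyntomic site, then transport the filtration already constructed on log $\THH$, and finally shift to land in the correct homological degree.

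For $(S, Q) \in {\rm lqrsPerfd}_R$, the degeneration of the homotopy fixed-point spectral sequence recorded just before the statement yields ${\rm Fil}_N^i \widehat{\Prism}_{(S, Q)} \cong \pi_{2i}\THH((S, Q); \Z_p)$, which by the evenness of $\pi_*\THH$ from Theorem \ref{thm:bms19thm71} coincides with the $[-2i]$-shift of the graded piece ${\rm gr}^i\THH((S, Q); \Z_p)$ of the double-speed Postnikov filtration. Both $(-, -) \mapsto {\rm Fil}_N^i \widehat{\Prism}_{(-, -)}$ and $(-, -) \mapsto {\rm gr}^i \THH((-, -); \Z_p)$ are sheaves on ${\rm lqrsPerfd}_R$ — the former by construction, the latter via Theorem \ref{thm:gabberdescent} and Proposition \ref{prop:logthhflatdescent}. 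By the unfolding equivalence of Theorem \ref{thm:unfolding}, this identification extends to ${\rm Fil}_N^i \widehat{\Prism}_{(A, M)} \cong {\rm gr}^i \THH((A, M); \Z_p)[-2i]$ for every log quasisyntomic $R$-algebra $(A, M)$.

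Now Proposition \ref{prop:unfoldinglogthh} equips ${\rm gr}^i \THH((A, M); \Z_p)$ with a finite filtration whose graded pieces are $(\wedge^j_A \L_{(A, M)/R})^\wedge_p[2i - j]$ for $0 \le j \le i$; shifting by $[-2i]$ transfers this to the desired filtration on ${\rm Fil}_N^i \widehat{\Prism}_{(A, M)}$ with graded pieces $(\wedge^j_A \L_{(A, M)/R})^\wedge_p[-j]$. The main technical content is therefore hidden in Proposition \ref{prop:unfoldinglogthh} and in the semiperfectoid computation ${\rm Fil}_N^i \widehat{\Prism}_{(S, Q)} = \pi_{2i}\THH((S, Q); \Z_p)$; what remains here is the bookkeeping check that the double-speed Postnikov filtration and the Nygaard filtration are identified compatibly on ${\rm lqrsPerfd}_R$, which is automatic from the definition of the Nygaard filtration as the one arising from the homotopy fixed-point spectral sequence together with the degeneration provided by Theorem \ref{thm:bms19thm71}. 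The hardest conceptual step — namely, deriving a filtration on log $\THH$ whose associated graded is controlled by Gabber's cotangent complex — has already been performed upstream via Proposition \ref{prop:filterlogthhcotcx} and Proposition \ref{prop:qrqsrep}, so the present proof should amount to little more than invoking these results.
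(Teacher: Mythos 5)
Your argument is correct and follows essentially the same route as the paper's proof, which observes that ${\rm Fil}_N^i \widehat{\Prism}_{(-, -)}$ is by construction the unfolding of $\pi_{2i}{\rm THH}((-, -); {\Bbb Z}_p)$ and then cites the finite filtration on that sheaf (the discussion preceding Proposition~\ref{prop:unfoldinglogthh}). Your version merely makes the degree shift $[-2i]$ between $\pi_{2i}{\rm THH}$ and ${\rm gr}^i{\rm THH}$ explicit, which is a helpful piece of bookkeeping but not a genuinely different argument.
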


\begin{proof} By construction (see the discussion after the proof of Theorem \ref{thm:bms19thm71}), the graded piece ${\rm gr}_N^i \widehat{\Prism}_{(-, -)}$ is the unfolding $\pi_{2i}{\rm THH}((-, -) ; {\Bbb Z}_p)^{\sqsupset}$. The result follows from (the discussion prior to) Proposition \ref{prop:unfoldinglogthh}. 
\end{proof}

\subsection{Setting up the motivic filtrations} 
We are finally able to prove Theorem \ref{thm:mainthm} from the introduction.
As in the exposition of \cite[Section 7.3]{BMS19}, 
we begin by working with a fixed perfectoid base ring $R$: 

\begin{proposition}\label{prop:mainthmperfectoidbase} Let $(A, M)$ be a log quasisyntomic $R$-algebra.

\begin{enumerate}
\item The negative log topological cyclic homology ${\rm TC}^-((A, M) ; {\Bbb Z}_p)$ admits a complete and exhaustive filtration with $i$th graded piece ${\rm Fil}_N^{\ge i}\widehat{\Prism}_{(A, M)}[2i]$. 
\item The log topological periodic homology ${\rm TP}((A, M) ; {\Bbb Z}_p)$ admits a complete and exhaustive filtration with $i$th graded piece $\widehat{\Prism}_{(A, M)}[2i]$. 
\end{enumerate} 
\end{proposition}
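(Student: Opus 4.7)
My plan is to mimic the strategy of \cite[Section 7.3]{BMS19}, relying on the unfolding equivalence of Theorem \ref{thm:unfolding} together with the evenness established in Theorem \ref{thm:bms19thm71}.

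First, I would work pointwise on $\mathrm{lqrsPerfd}_R$. For $(S,Q) \in \mathrm{lqrsPerfd}_R$, Theorem \ref{thm:bms19thm71} shows that ${\rm THH}((S,Q);{\Bbb Z}_p)$ is concentrated in even degrees. Since $H^s(S^1,-)$ vanishes for odd $s$, the homotopy fixed point and Tate spectral sequences have $E_2$-pages concentrated in bidegrees $(s,t)$ with $t-s$ even; as all differentials change total degree by $-1$, these spectral sequences degenerate. Consequently ${\rm TC}^-((S,Q);{\Bbb Z}_p)$ and ${\rm TP}((S,Q);{\Bbb Z}_p)$ are also concentrated in even degrees, and by the very definition of the logarithmic Nygaard filtration one has $\pi_{2i}{\rm TC}^-((S,Q);{\Bbb Z}_p) \cong {\rm Fil}_N^{\ge i}\widehat{\Prism}_{(S,Q)}$ and $\pi_{2i}{\rm TP}((S,Q);{\Bbb Z}_p) \cong \widehat{\Prism}_{(S,Q)}$. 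The double-speed Postnikov filtrations $\tau_{\ge 2*}{\rm TC}^-((S,Q);{\Bbb Z}_p)$ and $\tau_{\ge 2*}{\rm TP}((S,Q);{\Bbb Z}_p)$ are then complete and pointwise exhaustive, with $i$th graded pieces ${\rm Fil}_N^{\ge i}\widehat{\Prism}_{(S,Q)}[2i]$ and $\widehat{\Prism}_{(S,Q)}[2i]$ respectively.

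The next step is to globalize. By Proposition \ref{prop:logthhflatdescent} each Postnikov stage is a sheaf on $\mathrm{lqrsPerfd}_R$, and the unfolding equivalence of Theorem \ref{thm:unfolding} promotes these to sheaves on $\mathrm{lqsyn}_R$. I would then define the filtrations by
\[{\rm Fil}^i := R\Gamma_{\mathrm{lqsyn}}((A,M), \tau_{\ge 2i}(-)).\]
Passing $R\Gamma_{\mathrm{lqsyn}}$ through the cofibers in the Postnikov towers, and using the descriptions of the unfolded sheaves $\widehat{\Prism}_{(-,-)}$ and ${\rm Fil}_N^{\ge \bullet}\widehat{\Prism}_{(-,-)}$ established in the preceding subsections, the graded pieces then acquire the stated form. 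Completeness of the resulting filtrations follows from the vanishing $\lim_i \tau_{\ge 2i}X \simeq 0$ valid for any spectrum $X$, together with the fact that $R\Gamma_{\mathrm{lqsyn}}$ commutes with limits in the coefficient variable.

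The main obstacle I anticipate is exhaustion, since this requires commuting a filtered colimit with the totalization computing $R\Gamma_{\mathrm{lqsyn}}$ along a log quasisyntomic cover $(A,M) \to (S^\bullet, Q^\bullet)$ produced by Proposition \ref{prop:logquasicover}. Pointwise on the \v{C}ech nerve the filtration is tautologically exhaustive, so the issue is purely the interchange of $\mathrm{colim}_i$ and totalization. Following the strategy of \cite[Proof of Theorem 7.1]{BMS19}, I would argue this using Proposition \ref{prop:unfoldinglogthh} (and its analogs for ${\rm TC}^-$ and ${\rm TP}$, obtained by base-changing along ${\rm TC}^-(R;{\Bbb Z}_p) \to {\rm TP}(R;{\Bbb Z}_p)$) to reduce the study of each $\pi_n$ to a filtration whose graded pieces are shifts of wedge powers $(\wedge^* {\Bbb L}_{(S^\bullet, Q^\bullet)/R})^\wedge_p$; the finite $p$-complete Tor-amplitude supplied by Lemma \ref{syntomic.8} then ensures that at each cosimplicial level only finitely many filtration steps contribute to a given $\pi_n$, so the filtered colimit and the totalization may be interchanged term by term, yielding the desired exhaustion.
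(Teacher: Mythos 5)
Your overall strategy matches the paper's own proof, which is a brief deferral to the argument of \cite[Proposition~7.13]{BMS19}: one records that the double-speed Postnikov filtration $\tau_{\ge 2\bullet}$ of $\TC^-$ and $\mathrm{TP}$ defines a sheaf on $\lQRSPerfd_R$ with values in $\widehat{\mathrm{DF}}(A_{\mathrm{inf}}(R))$ (using Theorem~\ref{thm:bms19thm71} and Proposition~\ref{prop:logthhflatdescent}), unfolds it to $\lQSyn_R$ via Theorem~\ref{thm:unfolding}, and identifies the $i$th graded piece with $(\pi_{2i}\TC^-(-,-;\Z_p)[2i])^{\sqsupset}\cong \mathrm{Fil}_N^{\ge i}\widehat{\Prism}_{(-,-)}[2i]$. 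Your treatment of the first two steps and of completeness (via $\lim_i\tau_{\ge 2i}X\simeq 0$ and the commutation of $R\Gamma_{\mathrm{lqsyn}}$ with limits) is correct.

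The place where your write-up is muddled is exhaustiveness, and the tool you invoke is not the one doing the work. The double-speed Postnikov filtration has $\tau_{\ge 2i}X/\tau_{\ge 2(i+1)}X$ concentrated in degrees $[2i,2i+1]$ for \emph{any} spectrum $X$, so ``only finitely many steps contribute to a fixed $\pi_n$ at each cosimplicial level'' is automatic and has nothing to do with the $p$-complete Tor-amplitude of Lemma~\ref{syntomic.8}; nor is it clear that a level-by-level stabilization alone justifies commuting $\mathrm{colim}_i$ past $\lim_\Delta$ along the \v{C}ech nerve. The clean argument is a coconnectivity estimate: the cofiber of $\tau_{\ge 2i}\TC^-\to\TC^-$ is the sheaf $\tau_{\le 2i-1}\TC^-$, which is $(2i-1)$-coconnective termwise on the \v{C}ech nerve $(S^\bullet,Q^\bullet)$. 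Since a totalization (or any small limit) of $(2i-1)$-coconnective spectra is again $(2i-1)$-coconnective, the cofiber of $\mathrm{Fil}^i=R\Gamma_{\mathrm{lqsyn}}((A,M),\tau_{\ge 2i}\TC^-)\to\TC^-((A,M);\Z_p)$ is $(2i-1)$-coconnective. Therefore $\pi_n\mathrm{Fil}^i\to\pi_n\TC^-((A,M);\Z_p)$ is an isomorphism once $2i\le n$, and letting $i\to-\infty$ gives $\mathrm{colim}_i\,\mathrm{Fil}^i\xrightarrow{\ \simeq\ }\TC^-((A,M);\Z_p)$. (The same reasoning handles $\mathrm{TP}$.) Also, a small slip: the analogue of Proposition~\ref{prop:unfoldinglogthh} for $\TC^-$ and $\mathrm{TP}$ is not obtained by base change along $\TC^-(R;\Z_p)\to\mathrm{TP}(R;\Z_p)$; it is simply a consequence of the evenness and degenerate homotopy fixed-point/Tate spectral sequences of Theorem~\ref{thm:bms19thm71}, as in the discussion preceding the statement.
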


\begin{proof} We follow the proof strategy of \cite[Proposition 7.13]{BMS19}. Consider first the sheaf $(S, Q) \mapsto \tau_{\ge 2n}{\rm TC}^-((S, Q) ; {\Bbb Z}_p)$ for log quasiregular semiperfectoid $(S, Q)$: This is indeed a sheaf by Theorem \ref{thm:bms19thm71} and the ensuing discussion. Hence it unfolds to a sheaf $\tau_{\ge 2n}{\rm TC}^-((-, -) ; {\Bbb Z}_p)^{\sqsupset}$ on all log quasisyntomic $R$-algebras. Varying $n$, we obtain a sheaf \[\tau_{\ge 2*}{\rm TC}^-((-, -) ; {\Bbb Z}_p)^{\sqsupset} \colon {\rm lQSyn}_R \to \widehat{{\rm DF}}(A_{\rm inf}(R)).\] The $i$th graded piece is $(\pi_{2i}{\rm TC}^-((-, -) ; {\Bbb Z}_p)[2i])^{\sqsupset}$, which is canonically isomorphic to ${\rm Fil}_N^{\ge i} \widehat{\Prism}_{(-, -)}[2i]$. This finishes the proof of (1), and the proof of (2) is analogous. 
\end{proof}

\begin{proof}[Proof of Theorem \ref{thm:mainthm}] We follow the proof strategy of \cite[Theorem 1.12]{BMS19}. Parts (1) and (2) follow from Proposition \ref{prop:logquasicover}, Theorem \ref{thm:bms19thm71}, and the discussion following its proof. For part (3), we reduce to the case of a perfectoid base ring by Proposition \ref{prop:logquasicover}, in which case this follows from Proposition \ref{prop:mainthmperfectoidbase}. From this, the last part follows formally. \end{proof}

\section{Sample application: the de Rham comparison}

In future work, we intend to prove a log analog of \cite[Theorem 13.1]{BS22} relating our construction with that of \cite{KY22}. From this, we can transport the various comparison results relating log prismatic cohomology to log de Rham, log crystalline, and log \'etale cohomology from those of Koshikawa--Yao. This will provide an extension of the results in \cite{BS22} to the bad reduction (semistable) case.

As a reality-check, we demonstrate how to obtain the log de Rham comparison (up to a  Frobenius twist, see Remark \ref{rem:logsegal}) directly from our setup, as in \cite[Section 11]{BMS19}. Let us keep the notation of \cite[Section 11]{BMS19}. We fix a discretely valued extension $K$ of ${\Bbb Q}_p$. We let ${\cal O}_K$ be its ring of integers, $k$ its residue field, and we fix $\bar{\omega}$  a uniformizer of ${\cal O}_K$. We write $K_{\infty}$ for the $p$-adic completion of $K(\bar{\omega}^{1/p^{\infty}})$. There is a canonical map $W(k)[[z]] \to {\cal O}_K$ sending $z$ to the uniformizer $\bar{\omega}$, whose kernel is generated by an Eisenstein polynomial $E(z)$.  

Recall that an integral map of integral monoids is of \emph{Cartier type} if its relative Frobenius is exact. If $(A, M)$ is a pre-log ring with $A$ an adic ring, classically complete with respect to a finitely generated ideal $I$, we shall write (following \cite[Appendix A]{Kos22}) $({\rm Spf}(A), M)^a$ for the corresponding log formal scheme.

\begin{theorem}\label{thm:derhamcomparison} 
Let $(A, M)$ be an integral pre-log ring and assume that $({\rm Spf}(A), M)^a$ is log smooth of Cartier type over $({\cal O}_K, \langle \pi \rangle)$. Scalar extension along $W(k)[[z]] \to {\cal O}_K$ induces an isomorphism
\[\widehat{\Prism}_{(A, M) / ((W(k)[[z]], \langle z \rangle)} \otimes_{W(k)[[z]]} {\cal O}_K \simeq (\Omega_{(A, M) / ({\cal O}_K, \langle \pi \rangle)})^\wedge_p\] of ${\Bbb E}_{\infty}$-${\cal O}_K$-algebras. 
\end{theorem}

We refer to \cite[Appendix A]{Kos22} for material on log formal schemes. 

\begin{remark}\label{rem:logsegal} Theorem \ref{thm:derhamcomparison} is an analog of \cite[Corollary 11.12(ii)]{BMS19}. The descent needed for the missing Frobenius twist relies on a log version of the Segal conjecture (analogously to how \cite[Proposition 11.15]{BMS19} depends upon \cite[Corollary 8.18]{BMS19}). This will be established in forthcoming joint work with Alberto Merici.
\end{remark}

\subsection{Log cyclotomic bases} As in \cite[Section 11]{BMS19}, we will need to work with the relative version of logarithmic ${\rm THH}$. As recorded in Lemma \ref{lem:transitivitylogthh}, we have the transitivity formula \[{\rm THH}((A, M) / (R, P)) \simeq R \otimes_{{\rm THH}(R, P)} {\rm THH}(A, M).\] For this relative ${\rm THH}$ to inherit a cyclotomic structure from its absolute building blocks, we therefore need the following compatibility condition (cf.\ \cite[Definition 3.2.1]{HRW22}): 

\begin{definition} A \emph{log cyclotomic base} is a pre-log ring spectrum $(R, P)$ together with a commutative diagram \[\begin{tikzcd}{\rm THH}((R, P) ; {\Bbb Z}_p) \ar{r}{\varphi_p} \ar{d} & {\rm THH}((R, P) ; {\Bbb Z}_p)^{tC_p} \ar{d} \\ R \ar{r} & R^{tC_p}\end{tikzcd}\] of $S^1$-equivariant ${\Bbb E}_{\infty}$-rings. 
\end{definition}

\begin{lemma} The pre-log ring spectrum $({\Bbb S}[z], \langle z \rangle)$ is a log cyclotomic base. 
\end{lemma}

\begin{proof} This is similar to \cite[Proposition 11.3]{BMS19}, using the description of the circle action on the replete bar construction $B^{\rm rep}(\langle z \rangle)$ provided in \cite[Proposition 3.21]{Rog09}. This is spelled out below.

Recall (from e.g.\ \cite[Proposition 3.21]{Rog09}) that the cyclic bar construction $B^{\rm cyc}(\langle z \rangle)$ decomposes as \[B^{\rm cyc}(\langle z \rangle) \simeq * \sqcup (\bigsqcup_{j \ge 1} S^1(j));\] i.e.\ as the subobject of $S^1 \times {\Bbb Z}$ given by the union of $S^1 \times {\Bbb N}_{> 0}$ and the zero element $(0, 1)$. For the replete bar construction, there is another circle in ``degree zero'', that is, \[B^{\rm rep}(\langle  z \rangle) \simeq \bigsqcup_{j \ge 0} S^1(j) = S^1 \times {\Bbb N}.\] The circle action extends in the natural manner: $t.(s, n) = (t^ns, n)$. In particular, the action on the degree zero circle $S^1(0) = S^1 \times \{0\}$ is trivial.   

By Construction \ref{constr:cycl_structure}, the replete Frobenius $\varphi_p \colon {\Bbb S}[B^{\rm rep}(\langle z \rangle)] \to {\Bbb S}[B^{\rm rep}(\langle z \rangle)]^{tC_p}$ factors through ${\Bbb S}[\psi_p] \colon {\Bbb S}[B^{\rm rep}(\langle z \rangle)] \to {\Bbb S}[B^{\rm rep}(\langle z \rangle)^{hC_p}]$, where $\psi_p$ is given by $(s, n) \mapsto (s^p, pn)$. In particular, the resulting diagram \[\begin{tikzcd}{\Bbb S}[B^{\rm rep}(\langle z \rangle)] \ar{d} \ar{r}{\varphi_p} & {
\Bbb S}[B^{\rm rep}(\langle z \rangle)]^{tC_p} \ar{d} \\ {\Bbb S}[z] \ar{r}{z \mapsto z^p} & {\Bbb S}[z]^{tC_p}\end{tikzcd}\]is a commutative diagram of $S^1$-equivariant ${\Bbb E}_{\infty}$-rings. The only part of this statement which is not covered by \cite[Proposition 11.3]{BMS19} is that its ``degree zero'' part \begin{equation}\label{logpointcyclotomicbase}\begin{tikzcd}{\Bbb S}[S^1(0)] \ar{r}{\varphi_p} \ar{d} &  {\Bbb S}[S^1(0)]^{tC_p} \ar{d} \\ {\Bbb S} \ar{r} & {\Bbb S}^{tC_p}\end{tikzcd}\end{equation} commutes, which is formal.
\end{proof}

\begin{remark} Commutativity of the diagram \eqref{logpointcyclotomicbase} is closely related to the ``spherical log point'' $({\Bbb S}, \langle z \rangle)$ (with structure map ${\Bbb S}[z] \to {\Bbb S}, z \mapsto 0$) being a log cyclotomic base. This is important if one wants to pursue the log crystalline comparison with this setup. 
\end{remark}

\subsection{Reducing to the perfectoid case} The following are analogs of \cite[Proposition 11.7 and Corollary 11.8]{BMS19}. Let us write $\langle z^{1/p^{\infty}} \rangle$ for the monoid $\frac{1}{p}{\Bbb N}$, so that ${\Bbb Z}[\langle z^{1 / p^{\infty}} \rangle] = {\Bbb Z}[z^{1/p^{\infty}}]$.  \begin{proposition}\label{prop:augmeq} The augmentation \begin{equation}\label{eq:augmeq1}{\rm THH}({\Bbb S}[z^{1/p^{\infty}}], \langle z^{1/p^{\infty}} \rangle) \to {\Bbb S}[z^{1/p^{\infty}}]\end{equation} is an equivalence after $p$-completion. 
\end{proposition}

\begin{proof} As in \cite[Proof of Proposition 11.7]{BMS19}, we can apply ${\rm THH}({\Bbb Z}) \otimes_{{\Bbb S}} - \simeq {\rm THH}({\Bbb Z}) \otimes_{{\rm THH}({\Bbb S})} -$ to \eqref{eq:augmeq1} to reduce to checking the statement for the augmentation \[{\rm HH}({\Bbb Z}[z^{1/p^{\infty}}], \langle z^{1/p^{\infty}} \rangle) \to {\Bbb Z}[z^{1/p^{\infty}}].\] This, in turn, follows from the log {\rm HKR}-filtration (Theorem \ref{thm:loghkrfilt}) and the vanishing of the log cotangent complex ${\Bbb L}_{({\Bbb Z}[z^{1/p^{\infty}}], \langle z^{1/p^{\infty}} \rangle)/{\Bbb Z}}$ after $p$-completion (Lemma \ref{lem:perfectvanishing}). 
\end{proof}

We now aim to describe an analogue of \cite[Corollary 11.8]{BMS19}, which states that, after adjoining all $p$-power roots of a uniformizer to an ${\cal O}_K$-algebra $A$, the absolute topological Hochschild homology ${\rm THH}(A[\bar{\omega}^{1/p^{\infty}}]; {\Bbb Z}_p)$ coincides with the $p$-completion of the relative term   ${\rm THH}(A[\bar{\omega}^{1/p^{\infty}}] / {\Bbb S}[z^{1/p^{\infty}}])$.  

Let us fix some notation to state the analogous result in the logarithmic context. If $(A, M)$ is an $({\cal O}_K, \langle \bar{\omega} \rangle)$-algebra, we can also consider $(A, M)$ an algebra over $({\Bbb S}[z], \langle z \rangle)$ by sending $z$ to $\bar{\omega}$. We denote by $M[z^{1/p^{\infty}}]$ the monoid obtained as the pushout of the diagram $M \xleftarrow{} \langle z \rangle \xrightarrow{} \langle z^{1/p^{\infty}} \rangle$. 

\begin{corollary} There are equivalences \begin{align*}&{\rm THH}((A, M)/({\Bbb S}[z], \langle z \rangle)) \otimes_{{\Bbb S}[z]} {\Bbb S}[z^{1/p^{\infty}}] \\ \simeq \quad & {\rm THH}((A[\bar{\omega}^{1/p^{\infty}}], M[z^{1/p^{\infty}}]) / ({\Bbb S}[z^{1/p^{\infty}}], \langle z^{1/p^{\infty}} \rangle))\end{align*} and \[{\rm THH}((A[\bar{\omega}^{1/p^{\infty}}], M[z^{1/p^{\infty}}]) ; {\Bbb Z}_p) \simeq {\rm THH}((A \otimes_{{\cal O}_K} {\cal O}_{K_{\infty}}, M[z^{1/p^{\infty}}]); {\Bbb Z}_p).\] After $p$-completion, all terms agree compatibly with the circle action and Frobenius. 
\end{corollary}

\begin{proof} The equivalence \[{\rm THH}((A, M)/({\Bbb S}[z], \langle z \rangle)) \otimes_{{\Bbb S}[z]} {\Bbb S}[z^{1/p^{\infty}}] \simeq {\rm THH}(A, M) \otimes_{{\rm THH}({\Bbb S}[z], \langle z \rangle)} {\Bbb S}[z^{1/p^{\infty}}]\] obtained by applying Lemma \ref{lem:transitivitylogthh} and the fact that the map ${\rm THH}({\Bbb S}[z], \langle z \rangle) \to {\Bbb S}[z^{1/p^{\infty}}]$ factors through ${\rm THH}({\Bbb S}[z^{1/p^{\infty}}], \langle z^{1/p^{\infty}} \rangle)$ readily implies the first equivalence by another application of Lemma \ref{lem:transitivitylogthh}. Now Proposition \ref{prop:augmeq} implies that this coincides with ${\rm THH}((A[\bar{\omega}^{1/p^{\infty}}], M[z^{1/p^{\infty}}]) ; {\Bbb Z}_p)$ after $p$-completion, and the last equivalence is clear. 
\end{proof}

\subsection{The complex $\widehat{\Prism}_{(A, M) / (W[k][[z]], \langle z \rangle)}$} We now define the complex  $\widehat{\Prism}_{(A, M) / (W[k][[z]], \langle z \rangle)}$ appearing in the statement of Theorem \ref{thm:derhamcomparison}. Since ${\rm THH}(({\cal O}_K, \langle \pi \rangle) / ({\Bbb S}[z], \langle z \rangle) ; {\Bbb Z}_p) \simeq {\rm THH}({\cal O}_K / {\Bbb S}[z] ; {\Bbb Z}_p)$, we obtain \begin{equation}\label{reltpeven}\pi_*{\rm TP}(({\cal O}_K, \langle \pi \rangle) / ({\Bbb S}[z], \langle z \rangle) ; {\Bbb Z}_p) \cong (W(k)[[z]])[\sigma^{\pm 1}]\end{equation} for a degree two class $\sigma$ by \cite[Proposition 11.10]{BMS19}. From this, we obtain the following analog of \cite[Proposition 11.11]{BMS19} (with analogous proof):

\begin{proposition} Let $(S, Q) \in {\rm lQRSPerfd}_{({\cal O}_K, \langle \pi \rangle)}$ be log quasiregular semiperfectoid. Then the presheaf $(S, Q) \mapsto \pi_0 {\rm TP}((S, Q) / ({\Bbb S}[z], \langle z \rangle) ; {\Bbb Z}_p)$ is a sheaf. 
\end{proposition}

\begin{proof} This follows from the discussion in Section \ref{subsec:nygaardcomplete}, using \eqref{reltpeven} and that $(S, Q)$ being log quasiregular semiperfectoid implies that $(S \widehat{\otimes}_{{\cal O}_K} {\cal O}_{K_{\infty}}, M[z^{1/p^{\infty}}])$ is also log quasiregular semiperfectoid.  
\end{proof}

We write ${\rm gr}^0 {\rm TP}((-, -) / ({\Bbb S}[z], \langle z \rangle) ; {\Bbb Z}_p)$ for the unfolding of the sheaf $\pi_0{\rm TP}((-, -) / ({\Bbb S}[z], \langle z \rangle) ; {\Bbb Z}_p)$ on ${\rm lQRSPerfd}_{({\cal O}_K, \langle \pi \rangle)}$ to ${\rm lQSyn}_{({\cal O}_K, \langle \pi \rangle)}$.

\begin{definition} For $(A, M) \in {\rm lQSyn}_{({\cal O}_K, \langle \pi \rangle)}$, we write \[\widehat{\Prism}_{(A, M) / (W(k)[[z]], \langle z \rangle))} := {\rm gr}^0 {\rm TP}((A, M) / ({\Bbb S}[z], \langle z \rangle) ; {\Bbb Z}_p)\] for the value of the sheaf ${\rm gr}^0 {\rm TP}((-, -) / ({\Bbb S}[z], \langle z \rangle) ; {\Bbb Z}_p)$ at $(A, M)$.
\end{definition}

\begin{proof}[Proof of Theorem \ref{thm:derhamcomparison}] This is now analogous to \cite[Proof of Corollary 11.12(ii)]{BMS19}. There is a canonical equivalence \[{\rm gr}^0 {\rm TP}((A, M) / ({\Bbb S}[z], \langle z \rangle) ; {\Bbb Z}_p)/E(z) \simeq {\rm gr}^0 {\rm HP}((A, M) / ({\cal O}_K, \langle \pi \rangle) ; {\Bbb Z}_p)\] by the analogous statement for $(A, M) \in {\rm lQRSPerfd}_{({\cal O}_K, \langle \pi \rangle)}$. The result now follows from  Theorem \ref{thm:logantieau} and \cite[Corollary 7.6]{Bha12}.
\end{proof}

\bibliographystyle{amsalpha}

\bibliography{LogTHHPrismatic_final}

\end{document}